\newtheorem{theorem}{Theorem}
\newtheorem{lemma}{Lemma}
\newtheorem{remark}{Remark}
\newcommand{\R}{\mathbb{R}}
\newcommand{\revise}[1]{{\color{black}#1}}  
\begin{document}
\title{Tensor Neural Network and Its Numerical Integration\footnote{This work was
supported in part by the National Key Research and Development Program of China
(2019YFA0709601), the National Center for Mathematics and Interdisciplinary Science, CAS.}}
\author{
Yifan Wang\footnote{LSEC, NCMIS, Institute
of Computational Mathematics, Academy of Mathematics and Systems
Science, Chinese Academy of Sciences, Beijing 100190,
China,  and School of Mathematical Sciences, University
of Chinese Academy of Sciences, Beijing 100049, China (wangyifan@lsec.cc.ac.cn).}, \ \
Pengzhan Jin\footnote{School of Mathematical Sciences, Peking University, Beijing 100871, China (jpz@pku.edu.cn).},
\ \  and \ \
Hehu Xie\footnote{LSEC, NCMIS, Institute of Computational Mathematics, Academy of Mathematics and Systems
Science, Chinese Academy of Sciences, Beijing 100190,
China,  and School of Mathematical Sciences, University
of Chinese Academy of Sciences, Beijing 100049, China (hhxie@lsec.cc.ac.cn).}}
\date{}
\maketitle
\begin{abstract}
In this paper, we introduce a type of tensor neural network. For the first time, we propose
its numerical integration scheme and prove the computational complexity to be the polynomial
scale of the dimension.
Based on the tensor product structure, we develop an efficient numerical integration method by
using fixed quadrature points for the functions of the tensor neural network.
The corresponding machine learning method is also introduced for solving high-dimensional
problems.
Some numerical examples are also provided to validate the theoretical results and the
numerical algorithm.

\vskip0.3cm {\bf Keywords.} Tensor neural network, numerical integration, fixed quadrature points,
machine learning, high-dimensional eigenvalue problem.
\vskip0.2cm {\bf AMS subject classifications.} 65N30, 65N25, 65L15, 65B99
\end{abstract}

\section{Introduction}

Partial differential equations (PDEs) appear in many scientific and industrial applications since they can describe physical and engineering phenomena or processes.  So far, many types of numerical methods have been developed such as the finite difference method, finite element method, and spectral method for solving PDEs in three spatial dimensions plus the temporal dimension. But there exist many high-dimensional PDEs such as many-body Schr\"{o}dinger, Boltzmann equations, Fokker-Planck equations, and stochastic PDEs (SPDEs), which are almost impossible to be solved using traditional numerical methods. Recently, many numerical methods have been proposed based on machine learning to solve the high-dimensional PDEs (\cite{BaymaniEffati,WeinanE, EYu,HanJentzenE,LagarisLikasPapageorgiou, LitsarevOseledets, RaissiPerdikarisKarniadakis, DGM, WAN}). Among these machine learning methods, neural network-based methods attract more and more attention. Neural networks can be used to build approximations of the exact solutions of PDEs by machine learning methods. The reason is that neural networks can approximate any function given enough parameters. This type of method provides a possible way to solve many useful high-dimensional PDEs from physics, chemistry, biology, engineering, and so on.

Due to its universal approximation property, the fully-connected neural network (FNN) is the most widely used architecture to build the functions for solving high-dimensional PDEs. There are several types of FNN-based methods such as well-known the deep Ritz \cite{EYu}, deep Galerkin method \cite{DGM}, PINN \cite{RaissiPerdikarisKarniadakis}, and weak adversarial networks \cite{WAN}
for solving high-dimensional PDEs by designing different loss functions. Among these methods, the loss functions always include computing high-dimensional integration for the functions defined by FNN. For example,  the loss functions of the deep Ritz method require computing the integrations on the high-dimensional domain for the functions which is constructed by FNN. Direct numerical integration for the high-dimensional functions also meets the ``curse of dimensionality''. Always, the Monte-Carlo method is adopted to do the high-dimensional integration with some types of sampling methods
\cite{EYu,HanZhangE}.  Due to the low convergence rate of the Monte-Carlo method,
the solutions obtained by the FNN-based numerical methods are difficult to obtain high accuracy and stable convergence process. In other words, the Monte-Carlo method decreases computational work in each forward propagation by decreasing the simulation efficiency and stability of the FNN-based numerical methods for solving high-dimensional PDEs.

The CANDECOMP/PARAFAC (CP) tensor decomposition builds a low-rank approximation method and is a widely
used way to cope with the curse of dimensionality. The CP method decomposes a tensor
as a sum of rank-one tensors which can be considered as the higher-order
extensions of the singular value decomposition (SVD)
for the matrices.
This means the SVD idea can be generalized to the decomposition of the high-dimensional Hilbert
space into the tensor product of several Hilbert spaces.
The tensor product decomposition has been used to establish low-rank approximations
of operators and functions \cite{BeylkinMohlenkamp,HackbuschKhoromskij,JinMengLu,ReynoldsDoostanBeylkin}.
If we use the low-rank approximation to do the numerical integration, the computational
complexity can avoid the exponential dependence on the dimension in some cases \cite{BeylkinMohlenkampInt,LitsarevOseledets}.
Inspired by CP decomposition, this paper focuses on a special low-rank neural networks structure
and its numerical integration. It is worth mentioning that although CP decomposition
should be useful to obtain a low-rank approximation, there is no known general result
to give the relationship between the rank (hyperparameter $p$ in this paper) and error bounds.
For more details, please refer to \cite{HongKoldaDuersch,KoldaBader} and numerical
investigations \cite{BeylkinMohlenkamp}.

This paper aims to propose a type of tensor neural network (TNN) to build the trial
functions for solving high-dimensional PDEs.
The TNN is a function being designed by the tensor product operations on the neural networks
or by low-rank approximations of FNNs.
An important advantage is that we do not need to use Monte-Carlo method to
do the integration for the functions which is constructed by TNN.
This is the main motivation to design the TNN for high-dimensional PDEs in this paper.
We will show, the computational work for the integration of
the functions by TNN is only a polynomial scale of the dimension,
which means the TNN overcomes the ``curse of dimensionality'' in some sense for
solving high-dimensional PDEs.

An outline of the paper goes as follows. In Section \ref{Section_TNN}, we introduce the way to build TNN.
The numerical integration method for the functions constructed by TNN is designed
in Section \ref{Section_Integration}.
Section \ref{Section_Eigenvalue} is devoted to proposing the TNN-based machine learning method
for solving the high-dimensional eigenvalue
problem with the numerical integration method. Some numerical examples are provided
in Section \ref{Section_Numerical}
to show the validity and efficiency of the proposed numerical methods in this paper.
Some concluding remarks are given in the last section.

\section{Tensor neural network}\label{Section_TNN}
\subsection{The architecture of tensor neural network}
In this section, we introduce the TNN and its approximation property.
Without loss of generality, we first design the general TNN architecture with $K$-dimensional output to accommodate
more general computational aims. Then we consider the $1$-dimensional
output TNN architecture which is our primary
focus in this paper. Of course, it is easy to know that the $K$-dimensional output TNN can also be built
with $K$ $1$-dimensional output TNN.
The approximation property for the $1$-dimensional TNN, which will be given in this section,
can be directly extended to $K$-dimensional output TNN.

The architecture of TNN is similar to MIONet, just by setting the Banach spaces to Euclidean spaces,
more details about MIONet can be found in \cite{JinMengLu}.
MIONet mainly discusses the approximation of multiple-input continuous operators by low-rank
neural network structures and investigates the function approximation under $C$-norm.
The inputs of MIONet are the vectors that denote the coefficients of projections of
functions in infinite-dimensional Banach spaces onto the concerned finite-dimensional subspace.
While TNN considers solving high-dimensional PDEs and pays more attention
to the high-dimensional integration and the approximation to functions in Sobolev space
by low-rank neural network structures in $H^m$-norm. \revise{Different from MIONet,
the inputs of TNN are coordinates in the high-dimensional Euclidean space.
The tensor product structure of TNN can lead to high precision and high efficiency in calculating the numerical
integrations in the loss function derived from the variational principle.
The most important contribution of this paper is to reveal that we can do the highly accurate and efficient
numerical integrations of TNN for solving high-dimensional PDEs with TNN.  Furthermore,
this paper also shows that the Monte-Carlo or stochastic sampling process is not necessary for
machine learning}.
More specifically, we first construct $d$ subnetworks, where each subnetwork
is a continuous mapping from a bounded closed set $\Omega_i\subset\mathbb R$ to $\mathbb R^p$.
The $i$-th subnetwork can be expressed as:
\begin{eqnarray}\label{def_FNN}
\mathbf\Phi_i(x_i;\theta_i)=\big(\phi_{i,1}(x_i;\theta_i),\phi_{i,2}(x_i;\theta_i),
\cdots,\phi_{i,p}(x_i;\theta_i)\big)^T,\ \ \ i=1, \cdots, d,
\end{eqnarray}
where $x_i$ denotes the 1-dimensional input, $\theta_i$ denotes the parameters of the $i$-th
subnetwork, typically the weights and biases.
The number of layers and neurons in each layer, the selections of activation functions and
other hyperparameters can be different in different subnetworks. In this paper,
we simply use FNN architectures for each subnetwork. It is worth mentioning that,
in additon to FNN, other reasonable architecture can be used as long as it can approximate any mapping
from $\Omega_i\subset\mathbb R$ to $\mathbb R^p$ in some sense.
The only thing to be guaranteed is that the output dimensions of these subnetworks should be equal.
After building each subnetwork,
we combine the output layers of each subnetwork to obtain TNN architecture by the
following mapping from $\R^d$ to $\R^K$
\begin{equation}\label{def_TNN_K}
\mathbf\Psi(x;\theta)=W\cdot(\mathbf\Phi_{1}(x_1;\theta_1)\odot\mathbf\Phi_{2}(x_2;\theta_2)
\odot\cdots\odot\mathbf\Phi_{d}(x_d;\theta_d)),
\end{equation}
where $\odot$ is the Hadamard product (i.e., element-wise product), $\mathbf\Psi$
denotes a $K$-dimensional output function
which is defined as $\mathbf\Psi(x;\theta)=(\Psi_1(x;\theta),\cdots,\Psi_K(x;\theta))^T$,
the matrix $W\in\R^{K\times p}$ and $x=(x_1,\cdots,x_d)\in\Omega_1\times\cdots\times\Omega_d$.
In the following part of this paper, we set $\Omega=\Omega_1\times\cdots\times\Omega_d$.
Here $\theta=\{\theta_1,\cdots,\theta_d,W\}$ denote trainable parameters.
Figure \ref{TNNstructureK} shows the architecture of $K$-dimensional output TNN.
\begin{figure}[htb]
\centering
\includegraphics[width=16cm,height=12cm]{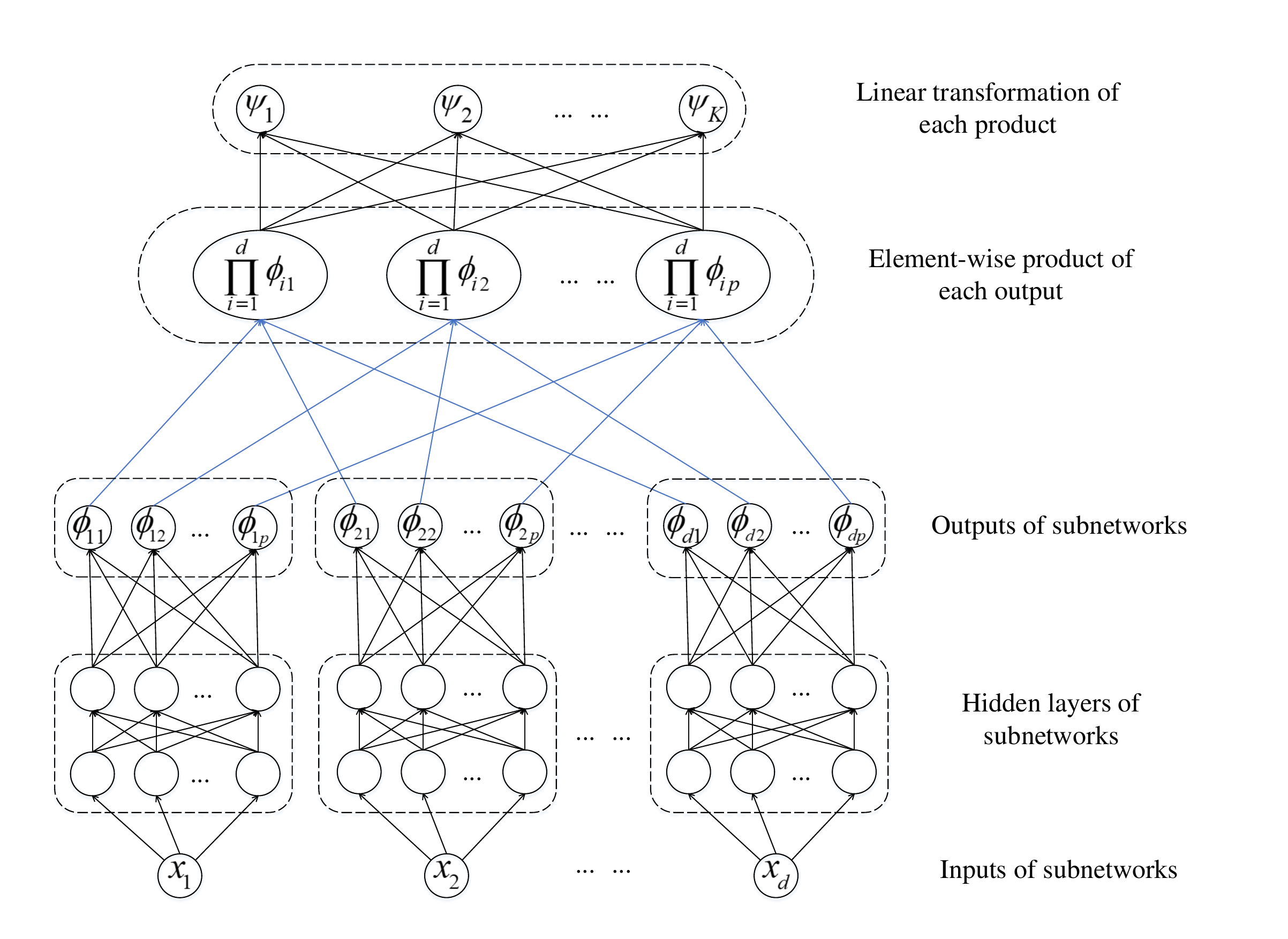}
\caption{Architecture of $K$-dimensional output TNN. Black arrows mean linear transformation
(or affine transformation). Each ending node of blue arrows is obtained by taking the scalar
multiplication of all starting nodes of blue arrows that end in this ending node.}\label{TNNstructureK}
\end{figure}

The $1$-dimensional output TNN (i.e. $K=1$) is always enough for solving normal high-dimensional PDEs.
When $K=1$, the matrix $W$ appears in (\ref{def_TNN_K}) degenerates to a row vector,
its members only play the role to scale the components of vectors obtained by the Hadamard product.
This effect can also be achieved by scaling the parameters of the output layers of the concerned subnetworks.
Therefore, in order to reduce the number of parameters,
we set the matrix $W$ to be unity and define the $1$-dimensional TNN as follows:
\begin{eqnarray}\label{def_TNN}
\Psi(x;\theta)=\sum_{j=1}^p\phi_{1,j}(x_1;\theta_1)\phi_{2,j}(x_2;\theta_2)\cdots\phi_{d,j}(x_d;\theta_d)
=\sum_{j=1}^p\prod_{i=1}^d\phi_{i,j}(x_i;\theta_i),
\end{eqnarray}
where $\theta=\{\theta_1,\cdots,\theta_d\}$ denotes all parameters of the whole architecture.
Figure \ref{TNNstructure} shows the corresponding architecture of $1$-dimensional output TNN.
For simplicity, TNN refers to the $1$-dimensional TNN hereafter in this paper.

\begin{figure}[htb]
\centering
\includegraphics[width=16cm,height=12cm]{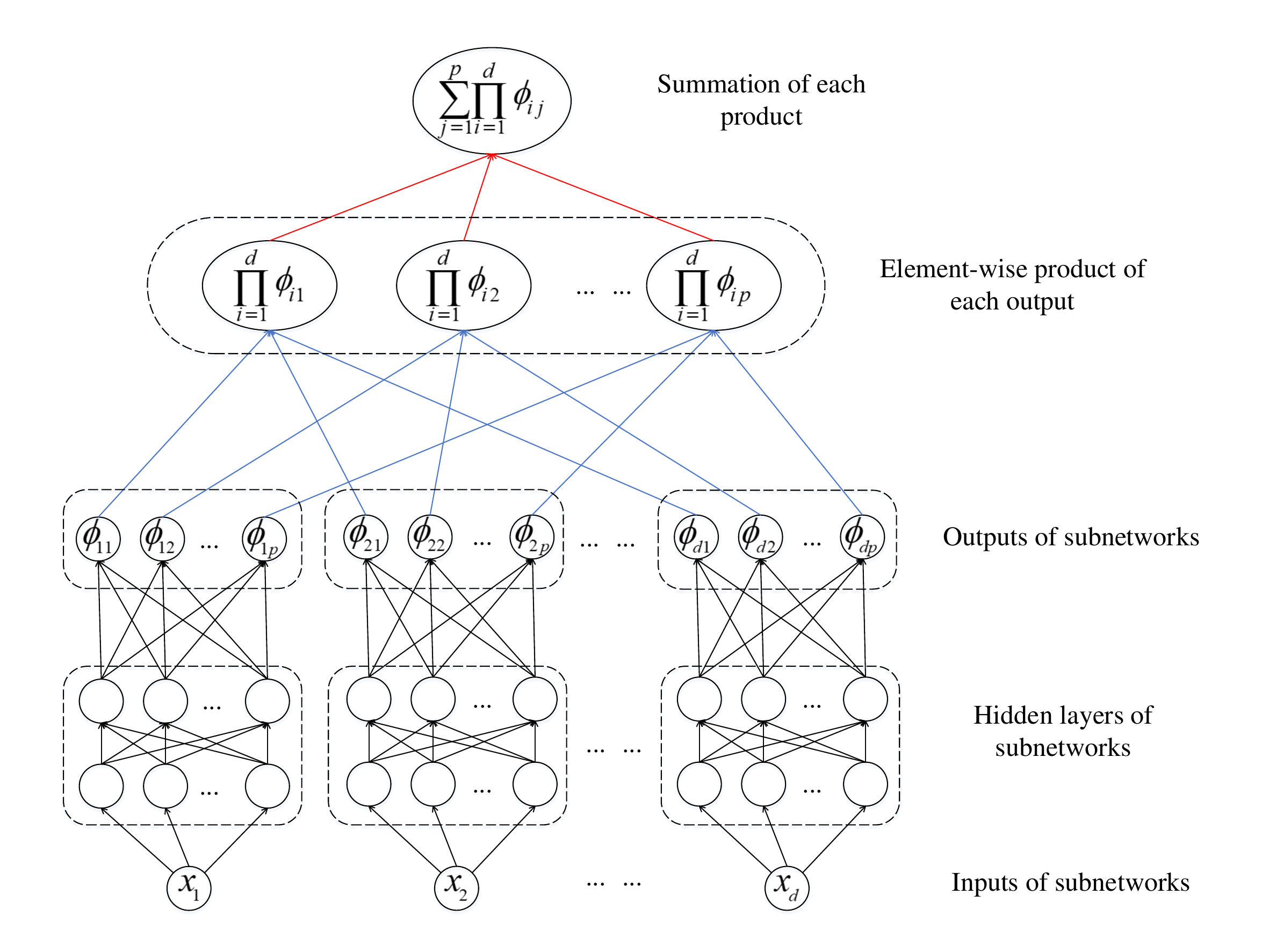}
\caption{Architecture of $1$-dimensional output TNN. Black arrows mean linear transformation
(or affine transformation). Each ending node of blue arrows is obtained by taking the
scalar multiplication of all starting nodes of blue arrows that end in this ending node.
The finall output of TNN is derived from summation of all starting nodes of red arrows.}\label{TNNstructure}
\end{figure}

Since there exists the isomorphism relation between $H^m(\Omega_1\times\cdots\times\Omega_d)$
and the tensor product space $H^m(\Omega_1)\otimes\cdots\otimes L^2(\Omega_d)$,
the process of approximating the function $f(x)\in H^m(\Omega_1\times\cdots\times\Omega_d)$
with the TNN defined by (\ref{def_TNN}) can be regarded as searching for a CP decomposition structure to
approximate $f(x)$ in the space $H^m(\Omega_1)\otimes\cdots\otimes H^m(\Omega_d)$
with the rank being not greater than $p$.
Due to the low-rank structure, we will find that the polynomial compound acting on the TNN and its derivatives
can be integrated with small scale computational work.

\subsection{Approximation of TNN in Sobolev space}
In order to show the validity for solving PDEs by TNN, we introduce the
following approximation result for the functions in the space $H^m(\Omega_1\times\cdots\times\Omega_d)$
under the sense of $H^m$-norm.

\begin{theorem}\label{theorem_approximation}
Assume that each $\Omega_i$ is a bounded \revise{open} interval in $\mathbb R$ for $i=1, \cdots, d$, $\Omega=\Omega_1\times\cdots\times\Omega_d$,
and the function $f(x)\in H^m(\Omega)$. Then for any tolerance $\varepsilon>0$, there exist a
positive integer $p$ and the corresponding TNN defined by (\ref{def_TNN})
such that the following approximation property holds
\begin{equation}\label{eq:L2_app}
\|f(x)-\Psi(x;\theta)\|_{H^m(\Omega)}<\varepsilon.
\end{equation}
\end{theorem}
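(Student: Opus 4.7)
The plan is to reduce the multivariate $H^m$-approximation problem to two classical one-dimensional ingredients: (i) density of separable (tensor-product) polynomials in $H^m(\Omega)$, and (ii) a Sobolev-type universal approximation theorem for FNNs on a bounded interval. Given $f \in H^m(\Omega)$ and tolerance $\varepsilon > 0$, I would first produce a separable intermediate approximant $P = \sum_{j=1}^p \prod_{i=1}^d g_{i,j}(x_i)$ with $\|f - P\|_{H^m(\Omega)} < \varepsilon/2$, and then replace each smooth univariate factor $g_{i,j}$ by a subnetwork $\phi_{i,j}(\cdot;\theta_i)$ of the form (\ref{def_FNN}) to arrive at a TNN (\ref{def_TNN}) whose $H^m$-distance from $P$ is less than $\varepsilon/2$.

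For step (i), since $\Omega$ is a product of bounded intervals, polynomials are dense in $H^m(\Omega)$; this is standard and can be proved, for example, by tensorizing one-dimensional Legendre expansions or by mollifying and Weierstrass-approximating. Any polynomial is automatically a finite sum of separable monomials $x_1^{\alpha_1}\cdots x_d^{\alpha_d}$, which provides the rank $p$ and the explicit smooth univariate factors $g_{i,j}\in C^\infty(\Omega_i)$. For step (ii), I would invoke the Sobolev refinement of the Cybenko/Hornik universal approximation theorem (Hornik, Mhaskar, Pinkus), which asserts that for an activation of sufficient smoothness, any $g\in H^m(\Omega_i)$ can be approximated in $H^m(\Omega_i)$ by an FNN to any prescribed tolerance $\delta$. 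Once each $g_{i,j}$ is replaced by $\phi_{i,j}$ with $\|g_{i,j}-\phi_{i,j}\|_{H^m(\Omega_i)}<\delta$, the remaining error
\[
\Bigl\|\sum_{j=1}^p \prod_{i=1}^d g_{i,j} - \sum_{j=1}^p \prod_{i=1}^d \phi_{i,j}\Bigr\|_{H^m(\Omega)}
\]
is controlled by a telescoping, multilinear perturbation argument using the identity $\prod_i g_i - \prod_i \phi_i = \sum_k \phi_1\cdots\phi_{k-1}(g_k-\phi_k)g_{k+1}\cdots g_d$, together with the tensor-product structure $\|\prod_i h_i\|_{H^m(\Omega)}^2 = \sum_{|\alpha|\le m}\prod_i \|\partial_i^{\alpha_i}h_i\|_{L^2(\Omega_i)}^2$ on a rank-one tensor. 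Summing over $j$ and $k$ gives a bound proportional to $p\, d\, \delta\, M^{d-1}$, where $M$ is an upper bound on the $H^m(\Omega_i)$-norms of the polynomial factors and their FNN approximants.

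The main obstacle is step (ii): the classical universal approximation theorem only delivers $C^0$ or $L^p$ convergence, whereas I genuinely need simultaneous control of all derivatives up to order $m$, so the Sobolev version with a smooth non-polynomial activation must be invoked. The only other care is quantitative: since the product $\prod_{i\ne k}\max(\|g_{i,j}\|_{H^m(\Omega_i)},\|\phi_{i,j}\|_{H^m(\Omega_i)})$ may be large, $\delta$ must be chosen sufficiently small relative to $M$, $p$, and $d$; this is routine once $P$ is fixed because its univariate factors are explicit polynomials on bounded intervals and hence have uniformly bounded Sobolev norms.
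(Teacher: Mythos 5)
Your proposal is correct and follows the same three-step architecture as the paper's proof: construct a finite-rank separable approximant within $\varepsilon/2$, approximate each univariate factor by an FNN in the $H^m(\Omega_i)$-norm, and propagate the factor errors through a multilinear estimate based on $\left\|\prod_{i=1}^d g_i\right\|_{H^m(\Omega)}\leq\prod_{i=1}^d\|g_i\|_{H^m(\Omega_i)}$ for rank-one terms. The one genuine divergence is the source of the separable approximant: the paper invokes the isomorphism $H^m(\Omega)\cong H^m(\Omega_1)\otimes\cdots\otimes H^m(\Omega_d)$ to assert density of finite sums $\sum_{j=1}^p\prod_{i=1}^d h_{i,j}(x_i)$, whereas you obtain it from density of polynomials in $H^m(\Omega)$, monomials being automatically separable. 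Your route is more elementary and arguably safer: read literally as a normed-space identification, the paper's isomorphism claim is delicate for $m\geq1$ (the tensor-product norm corresponds to mixed smoothness), while all that is actually needed---and all your polynomial argument uses---is density of finite-rank separable functions in the $H^m(\Omega)$-norm; your version additionally hands you explicit smooth factors with uniformly bounded Sobolev norms, which makes the final choice of $\delta$ transparent. Your telescoping identity $\prod_i g_i-\prod_i\phi_i=\sum_k\phi_1\cdots\phi_{k-1}(g_k-\phi_k)g_{k+1}\cdots g_d$ yields a bound of order $p\,d\,\delta\,M^{d-1}$, which is the same estimate the paper reaches via its binomial expansion $\sum_j\bigl(\tbinom{d}{1}M_j^{d-1}\delta+\tbinom{d}{2}M_j^{d-2}\delta^2+\cdots\bigr)$ with the simplification $\delta\leq1$; these differ only cosmetically. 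Two minor remarks: you rightly insist on the derivative (Sobolev) version of universal approximation---the paper cites Hornik--Stinchcombe--White (1990) for exactly this, though its displayed estimate (\ref{Estimate_1}) is written in the $L^2(\Omega_i)$-norm and then used as an $H^m(\Omega_i)$ bound, a slip your formulation avoids; and note that in the architecture (\ref{def_FNN}) the $p$ factors $\phi_{i,1},\ldots,\phi_{i,p}$ for fixed $i$ come from a single subnetwork $\mathbf\Phi_i:\Omega_i\to\R^p$, so you should stack your $p$ scalar approximants in parallel into one vector-valued network, exactly the $\R\to\R^p$ generalization the paper mentions.
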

\begin{proof}
Due to the isomorphism relation $H^m(\Omega)\cong H^m(\Omega_1)\otimes\cdots\otimes H^m(\Omega_d)$,
for any $\varepsilon>0$,
there exist a positive integer $p$, $h_{i,j}(x_i)\in H^m(\Omega_i),i=1,\cdots,d,j=1,\cdots,p$, and $h(x)\in H^m(\Omega)$
which is defined as follows
\begin{eqnarray*}
h(x)=\sum_{j=1}^ph_{1,j}(x_1)\cdots h_{d,j}(x_d)=\sum_{j=1}^p\prod_{i=1}^dh_{i,j}(x_i),
\end{eqnarray*}
such that the following estimate holds
\begin{equation}\label{Estimate_0}
\|f(x)-h(x)\|_{H^m(\Omega)}<\frac{\varepsilon}{2}.
\end{equation}
Denote $M_j=\max\limits_i\|h_{i,j}(x_i)\|_{H^m(\Omega_i)}, j=1,\cdots,p$, and
\begin{eqnarray*}
M=\sum_{j=1}^p\left(\tbinom{d}{1}M_j^{d-1}+\tbinom{d}{2}M_j^{d-2}+\cdots+\tbinom{d}{d-1}M_j^1+1\right).
\end{eqnarray*}
\revise{
From the density results in \cite[Chapter 5.3.3]{Evans} and $\Omega_i$ is a bounded open interval in $\mathbb R$,
there exists a $\bar h_{i,j}(x_i)\in C^\infty(\bar\Omega_i)\subset C(\bar\Omega_i)$ can approximate the one-dimensional function $h_{i,j}(x_i)$ with arbitrary accuracy under $H^m(\Omega)$-norm.
In \cite{Ellacott,Leshno}, it is shown that one-hidden layer FNN can approximate any continuous function on a compact set as long as the activation function is not a polynomial.
This conclusion can be naturally generalized from $\bar\Omega_i\rightarrow\mathbb R$ to $\bar\Omega_i\rightarrow\mathbb R^p$.
Then for $\delta=\min\left\{1,\frac{\varepsilon}{2M}\right\}$, there exist FNN structures
$\phi_i(x_i;\theta_i),i=1,\cdots,d$,
which are defined by (\ref{def_FNN}), such that
\begin{eqnarray}\label{Estimate_1}
\left\|h_{i,j}(x_i)-\phi_{i,j}(x_i;\theta_i)\right\|_{H^m(\Omega_i)}<\delta,\ \ \  i=1,\cdots,d, \ \ j=1,\cdots,p.
\end{eqnarray}
Denote $e_{i,j}(x_i)=\phi_{i,j}(x_i;\theta_i)-h_{i,j}(x_i)$, inequalities in  (\ref{Estimate_1}) imply that $\|e_{i,j}(x_i)\|_{H^m(\Omega_i)}<\delta$.
}

Since the property of multidimensional integrations on the tensor product domain $\Omega$,
for any $g_i(x_i)\in H^m(\Omega_i),i=1,\cdots,d$, the following inequality holds
\begin{eqnarray}\label{eq_mutiple_integrations}
\left\|\prod_{i=1}^dg_i(x_i)\right\|_{H^m(\Omega)}\leq\prod_{i=1}^d\left\|g_i(x_i)\right\|_{H^m(\Omega_i)}.
\end{eqnarray}
For the sake of clarity, we give a simple proof for (\ref{eq_mutiple_integrations}) as follows:
\begin{eqnarray*}
\left\|\prod_{i=1}^d g_i(x_i) \right\|_{H^m(\Omega)}^2&=&\sum_{|\alpha|\leq m}
\left\|D^\alpha\left(\prod_{i=1}^d g_i(x_i)\right)\right\|_{L^2(\Omega)}^2
=\sum_{\alpha_1+\cdots+\alpha_d\leq m}\left\|\prod_{i=1}^d\frac{\partial^{\alpha_i}g_i(x_i)}{\partial x_i^{\alpha_i}}\right\|_{L^2(\Omega_i)}^2\\
&=&\sum_{\alpha_1+\cdots+\alpha_d\leq m}\prod_{i=1}^d\left\|\frac{\partial^{\alpha_i}g_i(x_i)}{\partial x_i^{\alpha_i}}\right\|_{L^2(\Omega_i)}^2
\leq\prod_{i=1}^d\left(\sum_{\alpha_i\leq m}\left\|\frac{\partial^{\alpha_i}g_i(x_i)}{\partial x_i^{\alpha_i}}\right\|_{L^2(\Omega_i)}^2\right)\\
&=&\prod_{i=1}^d\left\|g_i(x_i)\right\|_{H^m(\Omega_i)}^2.
\end{eqnarray*}

Then from the property of binomial multiplication and inequality (\ref{eq_mutiple_integrations}),
we can build a TNN $\Psi(x;\theta)$ by (\ref{def_TNN}) such that the following inequalities hold
\begin{eqnarray}\label{Estimate_2}
&&\left\|h(x)-\Psi(x;\theta)\right\|_{H^m(\Omega)}
=\left\|\sum_{j=1}^p\prod_{i=1}^dh_{i,j}(x_i)-\sum_{j=1}^p\prod_{i=1}^d
\big(h_{i,j}(x_i)+e_{i,j}(x_i)\big)\right\|_{H^m(\Omega)}\nonumber\\
&&\leq \sum_{j=1}^p\left\|\prod_{i=1}^dh_{i,j}(x_i)-\prod_{i=1}^d
\big(h_{i,j}(x_i)+e_{i,j}(x_i)\big)\right\|_{H^m(\Omega)}\nonumber\\
&&\leq\sum_{j=1}^p\left(\tbinom{d}{1}M_j^{d-1}\delta^1+\tbinom{d}{2}M_j^{d-2}\delta^2
+\cdots\tbinom{d}{d}M_j^0\delta^d\right)\nonumber\\
&&\leq\sum_{j=1}^p\left(\tbinom{d}{1}M_j^{d-1}+\tbinom{d}{2}M_j^{d-2}
+\cdots+\tbinom{d}{d-1}M_j^1+1\right)\delta\nonumber\\
&&\leq M\delta<\frac{\varepsilon}{2}.
\end{eqnarray}
Therefore, from (\ref{Estimate_0}), (\ref{Estimate_2}) and triangle inequality, we have following estimates
\begin{eqnarray*}
&&\|f(x)-\Psi(x;\theta)\|_{H^m(\Omega)} \nonumber\\
&&\leq \|f(x)-h(x)\|_{H^m(\Omega)} +\|h(x)- \Psi(x;\theta)\|_{H^m(\Omega)} <\frac{\varepsilon}{2}+\frac{\varepsilon}{2}=\varepsilon.
\end{eqnarray*}
This is the desired result (\ref{eq:L2_app}) and the proof is complete.
\end{proof}

\revise{
Theorem \ref{theorem_approximation} gives the approximation property of TNN, it shows that TNN can approximate any $H^m(\Omega)$ function under $H^m(\Omega)$-norm.
It has to be pointed out that, since TNN has a tensor structure, each sub-network has only one-dimensional input.
In the proof of Theorem \ref{theorem_approximation}, we only need the approximation property of FNNs with  one-dimensional input.
Compared with that of the FNNs with the $d$-dimensional input,
the analysis of  that with one-dimensional input is always easier.
Here we cite a few instructive conclusions.
In \cite{HeLiXuZheng}, it is shown that linear finite element basis can be represented by the FNN with
one-dimensioanl input and the activation function being defined by the rectified linear unit (ReLU).
In \cite{LiTangYu}, it is proved that the monomail $x^n,n\in\mathbb N$ can be represented exactly by
the FNN with the rectified power unit (ReQU) acting as the activation function.
}

\revise{
\subsection{Approximation in $H_{\rm mix}^{t,\ell}(\Omega)$ by TNN}
Although there is no general result to give the relationship between the hyperparameter $p$ and error bounds,
there are still some estimations of traditional methods that can be used.}
For example, the sparse grid method and hyperbolic cross approximation method have become widely-used
numerical tools for high-dimensional problems \cite{ShenYu1,ShenYu2}.
\revise{These two methods also
assume that the approximation function has the similar tensor-product form, while each one dimensional
function is defined on the linear space with fixed basis. }
The conclusions about the cardinal of subspaces for sparse grid method and hyperbolic cross approximation method can be extended to the analysis of the hyperparameter $p$ of TNN.

For clarity, we focus on the periodic setting with $I^d=I\times I\times\cdots\times I=[0,2\pi]^d$ and
the approximations property of TNN to the functions in the linear space which is defined with Fourier basis.
Note that similar approximation results of TNN can be extended to the non-periodic functions.

Fist, for each variable $x_i\in[0,2\pi]$, let us define the one-dimensional Fourier basis $\{\varphi_{k_i}(x_i):= \frac{1}{\sqrt{2\pi}}e^{{\rm i}k_ix_i},k_i\in\mathbb Z\}$ and calssify functions via the decay of their Fourier coefficients.
For example, the isotropic Sobolev spaces \cite{Adams} on $I$ can be defined as follow
\begin{eqnarray}
H^s(I)=\left\{u(x_i)=\sum_{k_i\in\mathbb Z}c_{k_i}\varphi_{k_i}(x_i):\|u\|_{H^s(I)}=\left(\sum_{k_i\in\mathbb Z}(1+|k_i|)^{2s}\cdot|c_{k_i}|^2\right)^{1/2}<\infty \right\}.
\end{eqnarray}
Further denote multi-index $k=(k_1,\cdots,k_d)\in\mathbb Z^d$ and $x=(x_1,\cdots,x_d)\in I^d$.
Then the $d$-dimensional Fourier basis can be built with the tensor product way
\begin{eqnarray}
\varphi_k(x)\coloneqq\prod_{i=1}^d\varphi_{k_1}(x_i)=(2\pi)^{-d/2}e^{{\rm i}k\cdot x}.
\end{eqnarray}
We denote
\begin{eqnarray}
\lambda_{\rm mix}(k)\coloneqq\prod_{i=1}^d(1+|k_i|)\ \ \ {\rm and}\ \ \ \lambda_{\rm iso}(k)\coloneqq 1+\sum_{i=1}^d|k_i|.
\end{eqnarray}
Now, for $-\infty<t,\ell<\infty$, we  define the space $H_{\rm mix}^{t,\ell}(I^d)$ as follows (cf. \cite{})
\begin{eqnarray}
H_{\rm mix}^{t,\ell}(I^d)=\left\{u(x)=\sum_{k\in\mathbb Z^d}c_k\varphi_k(x):\|u\|_{H_{\rm mix}^{t,\ell}(I^d)}=\left(\sum_{k\in\mathbb Z^d}\lambda_{\rm mix}(k)^{2t}\cdot\lambda_{\rm iso}(k)^{2\ell}\cdot|c_k|^2\right)^{1/2}<\infty \right\}.
\end{eqnarray}
Then the standard isotropic Sobolev spaces \cite{Adams} and the Sobolev space of dominating mixed smoothness \cite{SchmeisserTriebel} can be written as
\begin{eqnarray}
H^s(I^d)=H_{\rm mix}^{0,s}(I^d)
=\left\{u(x)=\sum_{k\in\mathbb Z^d}c_k\varphi_k(x):\|u\|_{H_{\rm mix}^s(I^d)}=\Big(\lambda_{\rm iso}(k)^{2s}\cdot|c_k|^2\Big)^{1/2}<\infty\right\},
\end{eqnarray}
and
\begin{eqnarray}
H_{\rm mix}^t(I^d)=H_{\rm mix}^{t,0}(I^d)
=\left\{u(x)=\sum_{k\in\mathbb Z^d}c_k\varphi_k(x):\|u\|_{H_{\rm mix}^{t,0}(I^d)}=\left(\sum_{k\in\mathbb Z^d}\lambda_{\rm mix}(k)^{2t}\cdot|c_k|^2\right)^{1/2}<\infty\right\},
\end{eqnarray}
respectively.
Note that the parameter $\ell$ governs the isotropic smoothness, whereas $t$ governs the mixed smoothness.
The spaces $H_{\rm mix}^{t,\ell}(I^d)$ gives a quite flexible framework for the study of problems in Sobolev spaces.
See \cite{GriebelHamaekers,GriebelKnapek,Knapek} for more information on the space $H_{\rm mix}^{t,\ell}(I^d)$.

Second, for $K\in\mathbb N$ and $T\in(-\infty,1]$, define the following general sparse grid space to approximate functions in space $H_{\rm mix}^{t,\ell}(I^d)$ according to the frequency $k$
\begin{eqnarray}
V_{K,T}\coloneqq{\rm span}\left\{\varphi_k(x):k\in\mathbb Z^d,\lambda_{\rm mix}(k)\cdot\lambda_{\rm iso}(k)^{-T}\leq K^{1-T}\right\}.
\end{eqnarray}
The corresponding multi-index set of frequency $k$ is
\begin{eqnarray}
D_{K,T}\coloneqq\left\{k=(k_1,\cdots,k_d):\lambda_{\rm mix}(k)\cdot\lambda_{\rm iso}(k)^{-T}\leq K^{1-T}\right\}.
\end{eqnarray}
Obviously, the degree of freedom of space $|V_{K,T}|$  and the cardinal of the set $|D_{K,T}|$ are equivalent.
By \cite{Knapek2000,Zung}, the degree of freedom of space $V_{K,T}$ with respect to the parameter $K$ and $T$ is
\begin{eqnarray}
|V_{T,K}|=
\left\{
\begin{aligned}
&\mathcal O(K+1),&\ \ \ &{\rm for}\ 0<T<1,\\
&\mathcal O((K+1)\cdot\log(K+1)^{d-1}),&\ \ \ &{\rm for}\ T=0,\\
&\mathcal O((K+1)^{\frac{T-1}{T/d-1}}),&\ \ \ &{\rm for}\ T<0,\\
&\mathcal O((K+1)^d),&\ \ \ &{\rm for}\ T=-\infty.
\end{aligned}
\right.
\end{eqnarray}
In this case of $0<T<1$, the degree of freedom of the space $V_{K,T}$ as well as the cardinal of
the set $D_{K,T}$ are independent of dimension $d$.
The following lemma gives the approximation property of the space $V_{K,T}$.
\begin{lemma}\label{lemma_sparse_grid}
Let $f\in H_{\rm mix}^{t,\ell}(I^d)$,
$f_{K,T}$ be the best approximation in $V_{K,T}$ with respect to $H^m$-norm.
Futhermore denote by $p$ the actual number of degrees of freedom of $V_{K,T}$ as well as the cardinal of set $D_{K,T}$. Consider the case $T\in(0,(m-\ell)/t]$. Then, there holds
\begin{eqnarray}
\|f-f_{K,T}\|_{H^m(I^d)}\leq C(d)\cdot p^{-(\ell-m+t)}\cdot\|u\|_{H_{\rm mix}^{t,\ell}(I^d)},
\end{eqnarray}
where $C(d)\leq c\cdot d^2\cdot0.97515^d$ and the constant $c$ is independent of $d$.
\end{lemma}
Lemma \ref{lemma_sparse_grid} is introduced in \cite{GriebelHamaekers}, where more general cases are considered.
Note that in Lemma \ref{lemma_sparse_grid}, the best approximation $u_{K,T}$ has the following form
\begin{eqnarray}
f_{K,T}=\sum_{k\in D_{K,T}}c_k\varphi_k(x)=\sum_{k\in D_{K,T}}c_k\prod_{i=1}^d\varphi_{k_i}(x_i),
\end{eqnarray}
which is similar to the structure of TNN (\ref{def_TNN}).
Analogically, the hyperparameter $p$ in TNN is equivalent to the cardinal of set $D_{K,T}$ and each one-dimensional Fourier basis  $\varphi_{k_i}(x_i)$ is equivalent to $\phi_{i,j}(x_i)$ in (\ref{def_TNN}).
That's why we denote $p$ in Lemma \ref{lemma_sparse_grid} as the cardinal of set $D_{K,T}$.

In order to obtain a comprehensive error estimate for TNN, the problem we left behind is whether an one-dimensional Fourier basis function can be approximated by an FNN with one-dimensional input.
Fortunately, the Fourier basis function $\varphi_{k_i}(x_i)=\frac{1}{\sqrt{2\pi}}e^{-{\rm i}k_ix_i}$ can be represented by the FNN with one-dimensional input, one hidden layer and activation function $\sigma(x)=\sin(x)$.
The reason is based on the following property
\begin{eqnarray*}
e^{-{\rm i}k_ix_i} = \cos(k_ix_i)-{\rm i}\sin(k_ix_i) = \sin\left(\frac{\pi}{2}-k_ix_i\right)-{\rm i}\sin(k_ix_i).
\end{eqnarray*}
Thus, we can immediately obtain the following comprehensive error estimate for TNN.
\begin{theorem}\label{theorem_aprrox_rate}
Assume function $f(x)\in H_{\rm mix}^{t,\ell}(I^d)$, $t>0$ and $m>\ell$. Then there exists a TNN $\Psi(x;\theta)$ defined by (\ref{def_TNN}) such that the following approximation property holds
\begin{eqnarray}
\|f(x)-\Psi(x;\theta)\|_{H^m(I^d)}\leq C(d)\cdot p^{-(\ell-m+t)}\cdot\|u\|_{H_{\rm mix}^{t,\ell}(I^d)},
\end{eqnarray}
where $C(d)\leq c\cdot d^2\cdot0.97515^d$ and $c$ is independent of $d$. And each subnetwork of TNN is
a FNN which is built by using $\sin(x)$ as the action function and one hidden layer with $2p$ neurons,
see Figure \ref{TNNstructure}.
\end{theorem}

TNN-based machine learning method in this paper will adaptively select $p$ rank-one functions by training process.
From the approximation result in Theorem \ref{theorem_aprrox_rate}, when the target function belongs to $H_{\rm mix}^{t,\ell}(\Omega)$,
there exists a TNN with $p\sim\mathcal O(\varepsilon^{-(m-\ell-t)})$ such that the accuracy is $\varepsilon$.

Note that our analysis is based on the special activation function $\sin(x)$ and space $H_{\rm mix}^{t,\ell}(\Omega)$.
General approximation results in the $H^m(\Omega)$-norm for the functions in Barron space by the FNNs with $d$-dimensional input and general activation functions are discussed in \cite{SiegelXu}.

\section{Quadrature scheme for TNN}\label{Section_Integration}
In this section, we focus on the numerical integration of polynomial composite function of TNN and its derivatives.
Our main theorem shows that the application of TNN can
bring a significant reduction of the computational complexity for the related numerical integration.
For convenience, we first introduce the following sets of multiple indices
\begin{eqnarray*}
\mathcal B&:=&\left\{\beta=(\beta_1,\cdots,\beta_d)\in\mathbb N_0^d\ \Big|\ |\beta|\coloneqq\sum_{i=1}^d\beta_i\leq m \right\},\\
\mathcal A&:=&\left\{\alpha=(\alpha_\beta)_{\beta\in\mathcal B}\in\mathbb N_0^{|\mathcal B|}\
\Big|\ |\alpha|\coloneqq\sum_{\beta\in\mathcal B}\alpha_\beta\leq k \right\},
\end{eqnarray*}
where $\mathbb N_0$ denotes the set of all non-negative integers,  $m$ and $k$ are two positive integers,
$|\mathcal B|$ and $|\mathcal A|$ denote the cardinal numbers of $\mathcal B$ and $\mathcal A$, respectively.

For example, if $d=2$ and $m=1$, the set $\mathcal B$ is follows
\begin{eqnarray}\label{Definition_B}
\mathcal B = \Big\{(0,0), (1,0), (0,1)\Big\},
\end{eqnarray}
which has $3$ members.  Then $\mathcal A$ is a triple-index set. If $k=2$, the set $\mathcal A$ can be described
as follows
\begin{eqnarray}\label{Definition_A}
\mathcal A =\Big\{(0,0,0), (1,0,0), (0,1,0), (0,0,1), (2,0,0), (0,2,0), (0,0,2), (1,1,0), (1,0,1), (0,1,1)\Big\}.
\end{eqnarray}
Each index in $\mathcal A$ corresponds to the member in $\mathcal B$.
For example, we can simply take the order for the members in the set $\mathcal B$
as that of (\ref{Definition_B}). Then the member $\alpha=(1,1,0)\in \mathcal A$ indicates
that $\alpha_{(0,0)}=1$, $\alpha_{(1,0)}=1$ and $\alpha_{(0,1)}=0$, the member
$\alpha = (2,0,0)\in \mathcal A$ indicates $\alpha_{(0,0)}=2$, $\alpha_{(1,0)}=0$ and $\alpha_{(0,1)}=0$.

In this paper, we focus on the high-dimensional cases where $m\ll d$ and $k\ll d$.
Simple calculation leads to the following equations
$$|\mathcal B|=\sum_{j=0}^m\binom{j+d-1}{j}, \ \ \ \ |\mathcal A|=\sum_{j=0}^k\binom{j+|\mathcal B|-1}{j}.$$
By further estimation, we know that the scales of magnitudes of $|\mathcal B|$ and $|\mathcal A|$ are
$\mathcal O\big((d+m)^m\big)$
and $\mathcal O\big(((d+m)^m+k)^k\big)$, respectively.

Here and after, the parameter $\theta$ in (\ref{def_TNN}) will be omitted for brevity
without confusion. The activation function of TNN needs
to be smooth enough such  that $\Psi(x)$ has partial derivatives up to order $m$.
Here, we assume $F(x)$ includes the $k$-degree complete polynomial of $d$-dimensional TNN and
its partial derivatives up to order $m$
that can be expressed as follows
\begin{eqnarray}\label{def_F(x)}
F(x)=\sum_{\alpha\in\mathcal A}A_{\alpha}(x)\prod_{\beta\in\mathcal B}
\left(\frac{\partial^{|\beta|}\Psi(x)}{\partial x_1^{\beta_1}
\cdots\partial x_d^{\beta_d}}\right)^{\alpha_\beta},
\end{eqnarray}
where the coefficient $A_\alpha(x)$ is given by the following expansion such that the rank of $A_\alpha(x)$
is not greater
than $q$ in the tensor product space $L^2(\Omega_1)\otimes\cdots\otimes L^2(\Omega_d)$
\begin{eqnarray}\label{def_A_alpha}
A_\alpha(x)=\sum_{\ell=1}^q B_{1,\ell,\alpha}(x_1)B_{2,\ell,\alpha}(x_2)\cdots B_{d,\ell,\alpha}(x_d).
\end{eqnarray}
Here $B_{i,\ell,\alpha}(x_i)$ denotes the one-dimensional function in $L^2(\Omega_i)$ for $i=1, \cdots, d$
and  $\ell=1,\cdots, q$.
When using neural networks to solve PDEs, we always need to do
the high-dimensional integration $\int_\Omega F(x)dx$. If $\Psi(x)$ is a FNN,
$\int_\Omega F(x)dx$ can only be treated as a direct $d$-dimensional numerical integration,
which requires exponential scale of computational work according to the dimension $d$.
In practical applications, it is well known that the high-dimensional FNN functions can only be integrated
by the Monte-Carlo method with a low accuracy.
Different from FNN, we will show that the high-dimensional integration $\int_\Omega F(x)dx$ for the TNN
can be implemented by the normal numerical quadrature with the polynomial scale
of computational work with respect to the dimension $d$.  This means that the TNN
can cope with the curse of dimensionality in some sense.
The key idea to reduce the computational complexity of the numerical integration $\int_\Omega F(x)dx$ is
that we can decompose the TNN function $F(x)$ into a tensor product structure.

In order to implement  the  decomposition, for each
$\alpha = (\alpha_1,\cdots,\alpha_{|\mathcal B|})\in\mathcal A$, we  give the following definition
\begin{eqnarray*}
\mathcal B_\alpha:=\Big\{\beta=\left(\beta_1,\cdots,\beta_d\right)\in\mathcal B
\ \big|\ \alpha_\beta\geq 1\Big\}.
\end{eqnarray*}
For example, when the sets $\mathcal B$ and $\mathcal A$ are defined by (\ref{Definition_B})
and (\ref{Definition_A}), respectively,
the set $\mathcal B_\alpha$ corresponding to the member $\alpha = (1,1,0) \in \mathcal A$
in (\ref{Definition_A})
can be described as follows
\begin{eqnarray*}
\mathcal B_\alpha =\Big\{(0,0),  (1,0)\Big\}.
\end{eqnarray*}
By the definition of the index set $\mathcal A$, we can deduce that $|\mathcal B_\alpha|\leq k$ for any $\alpha\in\mathcal A$.

Since  $\Psi(x)$ has the TNN structure (\ref{def_TNN}), the cumprod can be further decomposed as
\begin{eqnarray}\label{eq_decomposition_prod}
&&\prod_{\beta\in\mathcal B_\alpha}
\left(\frac{\partial^{|\beta|}\Psi(x)}{\partial x_1^{\beta_1}
\cdots\partial x_d^{\beta_d}}\right)^{\alpha_\beta}
=\prod_{\beta\in\mathcal B_\alpha}\left(\frac{\partial^{|\beta|}\sum\limits_{j=1}^p\phi_{1,j}(x_1)
\cdots\phi_{d,j}(x_d)}{\partial x_1^{\beta_1}
\cdots\partial x_d^{\beta_d}}\right)^{\alpha_\beta}\nonumber\\
&&=\prod_{\beta\in\mathcal B_\alpha}\left(\sum\limits_{j=1}^p
\frac{\partial^{\beta_1}\phi_{1,j}(x_1)}{\partial x_1^{\beta_1}}
\cdots\frac{\partial^{\beta_d}\phi_{d,j}(x_d)}{\partial x_d^{\beta_d}} \right)^{\alpha_\beta}\nonumber\\
&&=\prod_{\beta\in\mathcal B_\alpha}\sum_{1\leq j_1,\cdots,j_{\alpha_\beta}\leq p}\left(\frac{\partial^{\beta_1}\phi_{1,j_1}(x_1)}{\partial x_1^{\beta_1}}
\cdots\frac{\partial^{\beta_1}\phi_{1,j_{\alpha_{\beta}}}(x_1)}{\partial x_1^{\beta_1}}\right) \cdots\left(\frac{\partial^{\beta_d}\phi_{d,j_1}(x_d)}{\partial x_d^{\beta_d}}
\cdots\frac{\partial^{\beta_d}\phi_{d, j_{\alpha_{\beta}}}(x_d)}{\partial x_d^{\beta_d}}\right)
\ \ \ \ \ \nonumber\\
&&=\prod_{\beta\in\mathcal B_\alpha}\sum_{1\leq j_1,\cdots,j_{\alpha_\beta}\leq p}\left(\prod_{\ell=1}^{\alpha_\beta}\frac{\partial^{\beta_1}\phi_{1,j_\ell}(x_1)}{\partial x_1^{\beta_1}}\right)\cdots\left(\prod_{\ell=1}^{\alpha_\beta}
\frac{\partial^{\beta_d}\phi_{d,j_\ell}(x_d)}{\partial x_d^{\beta_d}}\right) \nonumber\\
&&=\sum_{\substack{\beta\in\mathcal B_\alpha, \ell=1,\cdots,\alpha_\beta, \\ 1\leq j_{\beta,\ell}\leq p}}
\left(\prod_{\beta\in\mathcal B_\alpha}\prod_{\ell=1}^{\alpha_\beta}
\frac{\partial^{\beta_1}\phi_{1,j_{\beta,\ell}}(x_1)}{\partial x_1^{\beta_1}}\right)\cdots
\left(\prod_{\beta\in\mathcal B_\alpha}\prod_{\ell=1}^{\alpha_\beta}
\frac{\partial^{\beta_d}\phi_{d,j_{\beta,\ell}}(x_d)}{\partial x_d^{\beta_d}}\right).
\end{eqnarray}
With the help of expansion (\ref{eq_decomposition_prod}),  we can give the following expansion for $F(x)$
\begin{eqnarray}\label{Expansion_Fx}
F(x) &=& \sum_{\alpha\in\mathcal A}
\left(\sum_{\ell=1}^qB_{1,\ell,\alpha}(x_1)\cdots B_{d,\ell,\alpha}(x_d)\right)\nonumber\\
&&\ \ \ \cdot\sum_{\substack{\beta\in\mathcal B_\alpha, \ell=1,\cdots,\alpha_\beta,\\
1\leq j_{\beta,\ell}\leq p}}
\left(\prod_{\beta\in\mathcal B_\alpha}\prod_{\ell=1}^{\alpha_\beta}
\frac{\partial^{\beta_1}\phi_{1,j_{\beta,\ell}}(x_1)}{\partial x_1^{\beta_1}}\right)
\cdots\left(\prod_{\beta\in\mathcal B_\alpha}\prod_{\ell=1}^{\alpha_\beta}\frac{\partial^{\beta_d}\phi_{d,j_{\beta,\ell}}(x_d)}{\partial x_d^{\beta_d}}\right)\nonumber\\
&=&\sum_{\alpha\in\mathcal A}\sum_{\ell=1}^q\sum_{\substack{\beta\in\mathcal B_\alpha,
\ell=1,\cdots,\alpha_\beta,\\ 1\leq j_{\beta,\ell}\leq p}}
\left(B_{1,\ell,\alpha}(x_1)\prod_{\beta\in\mathcal B_\alpha}\prod_{\ell=1}^{\alpha_\beta}
\frac{\partial^{\beta_1}\phi_{1,j_{\beta,\ell}}(x_1)}{\partial x_1^{\beta_1}}\right)\nonumber\\
&&\ \quad\quad \ \cdots\left(B_{d,\ell,\alpha}(x_d)
\prod_{\beta\in\mathcal B_\alpha}\prod_{\ell=1}^{\alpha_\beta}
\frac{\partial^{\beta_d}\phi_{d,j_{\beta,\ell}}(x_d)}{\partial x_d^{\beta_d}}\right).
\end{eqnarray}
Based on the decomposition (\ref{Expansion_Fx}), we have the following splitting scheme for the integration
$\int_\Omega F(x)dx$
\begin{eqnarray}\label{Integration_Expansions}
\int_\Omega F(x)dx &=&\sum_{\alpha\in\mathcal A}\sum_{\ell=1}^q\sum_{\substack{\beta\in\mathcal B_\alpha, \ell=1,\cdots,\alpha_\beta,\\ 1\leq j_{\beta,\ell}\leq p}}
\int_{\Omega_1}\left(B_{1,\ell,\alpha}(x_1)\prod_{\beta\in\mathcal B_\alpha}\prod_{\ell=1}^{\alpha_\beta}
\frac{\partial^{\beta_1}\phi_{1,j_{\beta,\ell}}(x_1)}{\partial x_1^{\beta_1}}\right)dx_1\nonumber\\
&&\ \quad\quad \ \cdots\int_{\Omega_d}\left(B_{d,\ell,\alpha}(x_d)
\prod_{\beta\in\mathcal B_\alpha}\prod_{\ell=1}^{\alpha_\beta}
\frac{\partial^{\beta_d}\phi_{d,j_{\beta,\ell}}(x_d)}{\partial x_d^{\beta_d}}\right)dx_n.
\end{eqnarray}
Now, we come to introduce the detailed numerical integration method for the TNN function $F(x)$.
Without loss of generality, for $i=1, \cdots, d$, we choose $N_i$
Gauss points $\{x_i^{(n_i)}\}_{n_i=1}^{N_i}$ and the corresponding
weights $\{w_i^{(n_i)}\}_{n_i=1}^{N_i}$
for the $i$-th dimensional domain $\Omega_i$, and denote $N=\max\{N_1,\cdots,N_d\}$ and
$\underline N = \min\{N_1,\cdots,N_d\}$. Introducing the index
$n=(n_1,\cdots,n_d)\in\mathcal N\coloneqq\{1,\cdots,N_1\}\times\cdots\times\{1,\cdots,N_d\}$,
then the Gauss points and their corresponding weights for the integration (\ref{Integration_Expansions})
can be expressed as follows
\begin{eqnarray}\label{def_Gauss}
\left.
\begin{array}{rcl}
\Big\{x^{(n)}\Big\}_{n\in\mathcal N}&=&\left\{\left\{x_1^{(n_1)}\right\}_{n_1=1}^{N_1},\ \left\{x_2^{(n_2)}\right\}_{n_2=1}^{N_2},\ \cdots,\ \left\{x_d^{(n_d)}\right\}_{n_d=1}^{N_d}\right\},\\
\Big\{w^{(n)}\Big\}_{n\in\mathcal N}&=&\left\{\left\{w_1^{(n_1)}\right\}_{n_1=1}^{N_1}\times \left\{w_2^{(n_2)}\right\}_{n_2=1}^{N_2}\times \cdots, \times  \left\{w_d^{(n_d)}\right\}_{n_d=1}^{N_d}\right\}.
\end{array}
\right.
\end{eqnarray}
Then from    (\ref{def_F(x)}) and (\ref{def_A_alpha}), the numerical integration $\int_\Omega F(x)dx$
can be computed as follows:
\begin{eqnarray}\label{eq_I_simple_form}
\int_\Omega F(x)dx \approx\sum_{n\in\mathcal N}w^{(n)}\sum_{\alpha\in\mathcal A}
\left(\sum_{\ell=1}^qB_{1,\ell,\alpha}(x_1^{(n_1)})\cdots B_{d,\ell,\alpha}(x_d^{(n_d)})\right)
\prod_{\beta\in\mathcal B_\alpha}
\left(\frac{\partial^{|\beta|}\Psi(x^{(n)})}{\partial x_1^{\beta_1}
\cdots\partial x_d^{\beta_d}}\right)^{\alpha_\beta}.
\end{eqnarray}
Fortunately, with the help of expansions (\ref{eq_decomposition_prod}) and (\ref{Integration_Expansions}),
we can give the following splitting numerical integration scheme for $\int_\Omega F(x)dx$:
\begin{eqnarray}\label{eq_I_tensor_form}
\int_\Omega F(x)dx&\approx&
\sum_{\alpha\in\mathcal A}\sum_{\ell=1}^q\sum_{\substack{\beta\in\mathcal B_\alpha,
\ell=1,\cdots,\alpha_\beta,\\ 1\leq j_{\beta,\ell}\leq p}}
\left(\sum_{n_1=1}^{N_1}w_1^{(n_1)}B_{1,\ell,\alpha}(x_1^{(n_1)})\prod_{\beta\in\mathcal B_\alpha}\prod_{\ell=1}^{\alpha_\beta}
\frac{\partial^{\beta_1}\phi_{1,j_{\beta,\ell}}(x_1^{(n_1)})}{\partial x_1^{\beta_1}}\right)\nonumber\\
&&\ \ \quad \ \cdots\left(\sum_{n_d=1}^{N_d}w_d^{(n_d)}B_{d,\ell,\alpha}(x_d^{(n_d)})
\prod_{\beta\in\mathcal B_\alpha}\prod_{\ell=1}^{\alpha_\beta}
\frac{\partial^{\beta_d}\phi_{d,j_{\beta,\ell}}(x_d^{(n_d)})}{\partial x_d^{\beta_d}}\right).
\end{eqnarray}
The quadrature scheme (\ref{eq_I_tensor_form}) decomposes the
high-dimensional integration $\int_\Omega F(x)dx$ into to a series of 1-dimensional
integration, which is the main contribution of this paper.
Due to the simplicity of the 1-dimensional integration, the scheme  (\ref{eq_I_tensor_form})
can reduce the computational work of the high-dimensional integration for the $d$-dimensional
function $F(x)$ to the
polynomial scale of dimension $d$.
Theorem \ref{theorem_Gauss} gives the corresponding result.

\begin{theorem}\label{theorem_Gauss}
Assume that the function $F(x)$ is defined as (\ref{def_F(x)}), where the coefficient $A_\alpha(x)$
has the expansion (\ref{def_A_alpha}). Employ Gauss quadrature points and corresponding weights
(\ref{def_Gauss}) to $F(x)$ on the $d$-dimensional tensor product domain $\Omega$.
If the function $\Psi(x)$ involved in the function $F(x)$ has TNN form (\ref{def_TNN}),
the efficient quadrature scheme (\ref{eq_I_tensor_form}) is equivalent to (\ref{eq_I_simple_form})
and has $2\underline N$-th order accuracy. Let $T_1$ denote the computational complexity
for the $1$-dimensional function evaluation operations. The computational complexity can be bounded by
$\mathcal O\big(dqT_1k^2p^k\big((d+m)^m+k\big)^kN\big)$, which is the polynomial scale of the dimension $d$.
\end{theorem}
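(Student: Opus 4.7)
The theorem bundles three claims: (i) the two quadrature expressions are algebraically equal when $\Psi$ has TNN form, (ii) the tensor product Gauss rule retains $2\underline{N}$-th order accuracy, and (iii) the operation count matches the stated polynomial bound. I would prove them in this order, since each step reuses the algebraic machinery set up in the previous one.

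For the equivalence, the plan is to start from (\ref{eq_I_simple_form}) and substitute the TNN decomposition (\ref{eq_decomposition_prod}) for the cumprod $\prod_{\beta\in\mathcal B_\alpha}(\partial^{|\beta|}\Psi/\partial x^\beta)^{\alpha_\beta}$ at each quadrature node $x^{(n)}$, then exchange the finite sums. The weights satisfy the tensor product identity $w^{(n)}=w_1^{(n_1)}\cdots w_d^{(n_d)}$ by (\ref{def_Gauss}), and after substitution the summand is a product of $d$ factors each depending on a single variable $x_i^{(n_i)}$. Writing $\sum_{n\in\mathcal N}w^{(n)}(\cdots)=\prod_{i=1}^d\sum_{n_i=1}^{N_i}w_i^{(n_i)}(\cdots)_i$ for every separable term in the expansion (\ref{Expansion_Fx}) then reproduces (\ref{eq_I_tensor_form}) term by term; this is essentially the Fubini identity for discrete tensor product measures.

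For the accuracy, I would invoke the one-dimensional Gauss quadrature fact that $N_i$-point Gauss quadrature on $\Omega_i$ has order $2N_i$ whenever the integrand is smooth enough (which holds because the subnetworks $\phi_{i,j}$ are assumed smooth through order $m$ and the coefficient functions $B_{i,\ell,\alpha}$ are assumed in $L^2(\Omega_i)$, with smoothness inherited in practical use). A standard tensor product error argument shows the error of the full $d$-dimensional rule is a sum of terms each controlled by the error of one of the $d$ one-dimensional rules, so the overall order is dictated by the weakest component, which is $2\underline{N}$ by definition of $\underline{N}=\min_i N_i$.

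The complexity bookkeeping is the step I expect to absorb most of the effort. The outer sum in (\ref{eq_I_tensor_form}) ranges over triples $(\alpha,\ell,(j_{\beta,\ell}))$: by the earlier estimate $|\mathcal A|=\mathcal O(((d+m)^m+k)^k)$, there are $q$ choices of $\ell$, and the inner index set $\{j_{\beta,\ell}:\beta\in\mathcal B_\alpha,\,1\le\ell\le\alpha_\beta\}$ has size $p^{|\alpha|}\le p^k$. For each such triple, we evaluate $d$ one-dimensional quadratures on $N$ points; at each point we must compute $B_{i,\ell,\alpha}$ together with at most $|\alpha|\le k$ derivative factors of the $\phi_{i,j}$, each costing at most $\mathcal O(kT_1)$ if one charges $T_1$ per evaluation of a single-variable function and up to $k$ operations for forming the $\beta_i$-th derivative or for the multiplicative bookkeeping among the at-most-$k$ factors. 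Multiplying these together gives $\mathcal O(dqT_1k^2p^k((d+m)^m+k)^kN)$, which is polynomial in $d$ whenever $k,m$ are fixed. The delicate part is to argue that this count is not an overestimate in the sense of the stated $k^2$ factor; I would do this by carefully separating the cost of ``form one derivative of one $\phi_{i,j}$ at one node'' from the cost of ``multiply the up to $k$ resulting numbers together'' and showing each contributes at most one factor of $k$.
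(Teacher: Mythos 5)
Your proposal is correct and follows essentially the same route as the paper: the equivalence via the pre-established expansions (\ref{Expansion_Fx})--(\ref{Integration_Expansions}) and the tensor-product factorization of the weights, the accuracy from the $2N_i$-th order of each one-dimensional Gauss rule combined with the equivalence, and the complexity count hinging on the key observation $\sum_{\beta\in\mathcal B_\alpha}\alpha_\beta=|\alpha|\leq k$, which bounds the inner index set by $p^k$ and the per-dimension factor count by $k$. In fact your bookkeeping is more explicit than the paper's own proof, which simply asserts the bound ``by direct calculation'' after noting that the number of indices $j_{\beta,\ell}$ is at most $k$.
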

\begin{proof}
First, we point out that the number of $j_{\beta,\ell}$ in the last summation of
(\ref{eq_decomposition_prod}) is no more than $k$.
This result can be easily proved by the following inequality
\begin{eqnarray*}
\sum_{\beta\in\mathcal B_\alpha}\alpha_\beta=|\alpha|\leq k.
\end{eqnarray*}
Then, by direct calculation, the computational complexity of (\ref{eq_I_tensor_form})
can be bounded by $\mathcal O\big(dqT_1k^2p^k((d+m)^m+k)^kN\big)$.
Since the 1-dimensional integration with $N_i$ Gauss points has $2N_i$-th order accuracy
and the equivalence of (\ref{eq_I_tensor_form}) and (\ref{eq_I_simple_form}),
both quadrature schemes (\ref{eq_I_tensor_form}) and (\ref{eq_I_simple_form})
have the $2\underline N$-th order accuracy. Then the proof is complete.
\end{proof}
\begin{remark}
Theorem \ref{theorem_Gauss} considers using 1-dimensional Gauss points to compute $d$-dimensional integrations.
Other types of 1-dimensional quadrature schemes can also be employed to do the $d$-dimensional integration and have similar results. In numerical examples, we decompose each $\Omega_i$ into subintervals with mesh size $h$ and choose $N_i$ 1-dimensional Gauss points in each subinterval. Then the deduced $d$-dimensional quadrature scheme has accuracy $\mathcal O(h^{2\underline N}/(2\underline N)!)$, where the included constant depends on the smoothness of $F(x)$.
\end{remark}
If $\Psi(x)$ does not have the tensor form, for example, $\Psi(x)$ is a $d$-dimensional FNN
and use the same quadrature scheme (\ref{eq_I_tensor_form}),  the computational complexity
is $\mathcal O\big((dqT_1+kT_d)((d+m)^m+k)^kN^d\big)$, where $T_d$ denotes the complexity
of the $d$-dimensional function evaluation operations.

\section{Solving high-dimensional eigenvalue problem by TNN}\label{Section_Eigenvalue}
This section is devoted to discussing the applications of TNNs to the numerical
solution of the high-dimensional second order elliptic eigenvalue problems.
For simplicity, we are concerned with the following model problem:
\begin{equation}\label{def_EVP_strong}
\left\{\begin{aligned}
-\Delta u +vu&=\lambda u, & & \text {in}\ \ \Omega, \\
u &=0, & & \text {on}\ \partial \Omega,
\end{aligned}\right.
\end{equation}
where $\Omega=\Omega_1\times\cdots\times\Omega_d$, each $\Omega_i=(a_i,b_i),i=1,\cdots,d$
is a bounded interval in $\mathbb R$, $v\in L^2(\Omega)$ is a potential function.
We assume that the rank of $v$ is finite in the tensor product space
$L^2(\Omega_1)\otimes\cdots\otimes L^2(\Omega_d)$.
The potential function $v$ often occurs in quantum mechanics problems. In this paper, we consider the
following cases: \\
zero function
\begin{equation}\label{potential_well}
v(x)= 0, \ \ \ \ {\rm in}\ \ \Omega,
\end{equation}
harmonic oscillator
\begin{equation}\label{harmonic oscillator}
v(x)=\sum_{i=1}^dx_i^2,\ \ \ {\rm in}\ \ \Omega, 
\end{equation}
and coupled oscillator
\begin{eqnarray}\label{coupled harmonic}
v(x)=\sum_{i=1}^dx_i^2-\sum_{i=1}^{d-1}x_ix_{i+1},
\end{eqnarray}
and the Column potential for Schr\"{o}dinger equation.

In quantum mechanics, the eigenvalue problem (\ref{def_EVP_strong}) with the potential
function (\ref{potential_well})
is the Schr\"{o}dinger equation with infinite potential well.
The eigenvalue problem with the potential (\ref{harmonic oscillator}) comes from the truncation of the
Schr\"{o}dinger equation with the harmonic oscillator potential which is defined in the whole space.
The more complicated eigenvalue problem with the potential (\ref{coupled harmonic})
describes the system of chains of $d$ coupled harmonic oscillators which is described
in detail in \cite{CHO}.

There is a well-known variational principle or minimum theorem of the eigenvalue
problem (\ref{def_EVP_strong}) for the smallest eigenpair $(\lambda,u)$:
\begin{eqnarray}\label{minimum theorem}
\lambda=\min_{w\in H_0^1(\Omega)}\mathcal R(w)=\min_{w\in H_0^1(\Omega)}\frac{\int_\Omega |\nabla w|^2dx
+\int_\Omega vw^2dx}{\int_\Omega w^2dx},
\end{eqnarray}
where $\mathcal R(w)$ denotes the Rayleigh quotient for the function $w\in H_0^1(\Omega)$.

In order to solve the eigenvalue  problem (\ref{def_EVP_strong}), we build a TNN structure $\Psi(x;\theta)$
which is defined by (\ref{def_TNN}), and denote the set of all possible values of $\theta$ as $\Theta$.
In order to avoid the penalty on boundary condition,
we simply use the method in \cite{GuWangYang} to treat the Dirichlet boundary condition.
This method is firstly proposed in \cite{LagarisLikasFotiadis,LagarisLikasPapageorgiou}.
For $i=1,\cdots, d$, the $i$-th subnetwork $\phi_i(x_i;\theta_i)$ is defined as follows:
\begin{eqnarray*}\label{bd_subnetwork}
\phi_i(x_i;\theta_i)&\coloneqq&(x_i-a_i)(b_i-x_i)\widehat\phi_i(x_i;\theta_i)\nonumber\\
&=&\big((x_i-a_i)(b_i-x_i)\widehat\phi_{i,1}(x_i;\theta_i),\cdots,(x_i-a_i)(b_i-x_i)
\widehat\phi_{i,p}(x_i;\theta_i)\big)^T,
\end{eqnarray*}
where $\widehat\phi_i(x_i;\theta_i)$ is a FNN from $\mathbb R$ to $\mathbb R^p$ with sufficient
smooth activation functions, such that $\Psi(x;\theta)\in H_0^1(\Omega)$.

The trial function set $V$ is modeled by the TNN structure $\Psi(x;\theta)$
where parameters $\theta$ take all the possible values and it is obvious that $V\subset H_0^1(\Omega)$.
The solution and the parameters $(\lambda^*,\Psi(x;\theta^*))$ of the following
optimization problem are approximations to the smallest eigenpair:
\begin{eqnarray}\label{approx_opt}
\lambda^*=\min_{\Psi(x;\theta)\in V}\mathcal R(\Psi(x;\theta))
=\min_{\theta\in\Theta}\frac{\int_\Omega |\nabla \Psi(x;\theta)|^2dx
+\int_\Omega v(x)\Psi^2(x;\theta)dx}{\int_\Omega \Psi^2(x;\theta)dx}=\mathcal R(\Psi(x;\theta^*)).
\end{eqnarray}
Note that all integrands of the numerator and the denominator of (\ref{approx_opt})
have the form (\ref{def_F(x)}). With the help of Theorem \ref{theorem_Gauss},
we can implement these numerical integrations by scheme (\ref{eq_I_tensor_form}) with the computational
work being bounded by the polynomial  scale of dimension $d$.
We choose Gauss points and their corresponding weights
which are defined by (\ref{def_Gauss}) to compute these integrations,
and define the loss function as follows:
\begin{eqnarray}\label{loss}
L(\theta)\coloneqq\frac{\sum_{n\in\mathcal N}w^{(n)}|\nabla\Psi(x^{(n)},\theta)|^2
+\sum_{n\in\mathcal N}w^{(n)}v(x^{(n)})\Psi^2(x^{(n)};\theta)}{\sum_{n\in\mathcal N}\Psi^2(x^{(n)};\theta)}.
\end{eqnarray}
In this paper, the gradient descent (GD) method is adopted to minimize the loss function $L(\theta)$.
The GD scheme can be described as follows:
\begin{eqnarray}\label{gd}
\theta^{(k+1)}=\theta^{(k)}-\eta\nabla L(\theta^{(k)}),
\end{eqnarray}
where $\theta^{(k)}$ denotes the parameters after the $k$-th GD step, $\eta$ is the learning rate (step size).

Different from the general FNN-based machine learning method, we use the fixed quadrature points
$\{x^{(n)}\}_{n\in\mathcal N}$ to do the numerical integration in this paper.
Using the fixed quadrature points for FNN, the computational
work for the numerical integration will depend exponentially on the dimension $d$.
In order to avoid the ``curse of dimensionality'', in the numerical implementation for
solving high-dimensional PDEs by FNN-based method,
the stochastic gradient descent (SGD) method \cite{KingmaAdam} with Monte-Carlo
integration are always used \cite{EYu}. The application of random sampling quadrature
points always leads to the low accuracy and instability convergence for the FNN method.

Fortunately, based on TNN structure in the loss function (\ref{loss}),  Theorem \ref{theorem_Gauss}
shows that the numerical integration does not encounter ``curse of dimensionality''
since the computational work can be bounded by the polynomial scale of dimension $d$.
This is the reason we can use GD  method  to solve the optimization problem (\ref{approx_opt})
instead of SGD in this paper. That is to say, using all quadrature points to
implement the integration and the GD step (\ref{gd}) in TNN-based machine learning are reasonable.
With the help of the high accuracy of the tensor product with Gauss points and Theorem \ref{theorem_approximation},
the high accuracy of the TNN-based method can be guaranteed.


Although in this paper, we simply choose a fixed rank $p$ in our numerical examples, it is worth mentioning that, by adding columns to the weight matrices of the output layer in each subnetwork, we can transfer weights from the old TNN to the new one to improve the rank $p$. We can stop this process when the accuracy improvement is small enough.
Choosing the rank $p$ by the computable posterior error estimation and the corresponding transfer learning framework will be presented in our future work.

\begin{remark}
In this section, we are mainly talking about solving high-dimensional eigenvalue problems by TNN.
It is worth mentioning that TNN structure can be applied naturally to solve PDEs
with different types of loss functions.
We will do a preliminary test in our numerical examples.
\end{remark}

\section{Numerical examples}\label{Section_Numerical}
In this section, we provide several examples to validate the efficiency and accuracy of the  TNN-based
machine learning method proposed in this paper. The first two examples are used to
demonstrate the high accuracy of the TNN method for high-dimensional problems.
We will explore the effect of the vital hyperparameter $p$ on the accuracy in the third example
where the ground state energy may not be exactly represented by a finite-rank CP decomposition.
The fourth example for the ground state of a helium atom which comes from the real physical problem
is used to give an illuminating way to deal with the problem that does not satisfy
the assumption (\ref{def_A_alpha}). Note that in the fourth example, the potential cannot
be exactly expressed as a CP decomposition of
finite rank, this makes the loss function no longer satisfy the assumption (\ref{def_A_alpha}).
In the last example, we solve a boundary value problem with Neumann boundary condition to
show the efficiency of TNN for solving high-dimensional PDEs.

In order to show the convergence behavior and accuracy of eigenfunction approximations by TNN,
we define the $L^2(\Omega)$ projection operator $\mathcal P:H_0^1(\Omega)
\rightarrow {\rm span}\{\Psi(x;\theta^*)\}$ as follows:
\begin{eqnarray*}
\left\langle\mathcal Pu,v\right\rangle_{L^2}=\left\langle u,v\right\rangle_{L^2}\coloneqq\int_\Omega uvdx,
\ \ \ \forall v\in {\rm span}\{\Psi(x;\theta^*)\}\ \ {\rm for}\ u\in H_0^1(\Omega).
\end{eqnarray*}
And we define the $H^1(\Omega)$ projection operator $\mathcal Q:H_0^1(\Omega)\rightarrow {\rm span}\{\Psi(x;\theta^*)\}$
as follows:
\begin{eqnarray*}
\left\langle\mathcal Qu,v\right\rangle_{H^1}=\left\langle u,v\right\rangle_{H^1}\coloneqq\int_\Omega\nabla u\cdot\nabla vdx,
\ \ \ \forall v\in {\rm span}\{\Psi(x;\theta^*)\}\ \ {\rm for}\ u\in H_0^1(\Omega).
\end{eqnarray*}
Then we define the following errors for the approximated eigenvalue $\lambda^*$ and eigenfunction $\Psi(x;\theta^*)$
\begin{eqnarray*}\label{relative_errors}
e_\lambda\coloneqq\frac{|\lambda^*-\lambda|}{|\lambda|},\ \ \ e_{L^2}\coloneqq\frac{\|u-\mathcal Pu\|_{L^2(\Omega)}}{\|u\|_{L^2(\Omega)}},
\ \ \ e_{H^1}\coloneqq\frac{\left|u-\mathcal Qu\right|_{H^1(\Omega)}}{\left|u\right|_{H^1(\Omega)}},
\end{eqnarray*}
in all eigenvalue examples.
As for Neumann boundary value problem, we define the following errors for the approximated solution $\Psi(x;\theta^*)$
\begin{eqnarray*}
\widehat e_{L^2}\coloneqq\frac{\|u-\Psi(x;\theta^*)\|_{L^2(\Omega)}}{\|f\|_{L^2(\Omega)}},
\ \ \ \widehat e_{H^1}\coloneqq\frac{\left|u-\Psi(x;\theta^*)\right|_{H^1(\Omega)}}{\left|f\right|_{H^1(\Omega)}}.
\end{eqnarray*}
Here $\|\cdot\|_{L^2}$ and $|\cdot|_{H^1}$ denote $L^2(\Omega)$ norm and $H^1(\Omega)$ seminorm, respectively.
These relative errors are often used to test numerical methods for eigenvalue problems and PDEs.
We use the quadrature scheme (\ref{eq_I_tensor_form}) to compute $e_{L^2}$ and $e_{H^1}$
with the same Gauss points and weights as computing the loss functions
if the rank of the exact solution $u(x)$ is finite in the tensor product space
$L^2(\Omega_1)\otimes\cdots\otimes L^2(\Omega_d)$, otherwise we only report $e_\lambda$.
With the help of Theorem \ref{theorem_Gauss} and Gauss quadrature points,
the high efficiency and accuracy
for computing $e_{L^2}$ and $e_{H^1}$ can be guaranteed.

In implementation, we train the networks by Adam optimizer \cite{KingmaAdam} and use automatic differentiation for derivatives in PyTorch.
\revise{In this chapter, all examples is using sine function $\sin(x)$ as activation function.}

\subsection{Laplace eigenvalue problem}\label{Section_laplace}
In the first example, the potential function is defined as (\ref{potential_well}) with the computational domain $\Omega=[0,1]^d$.
Then the exact smallest eigenvalue and eigenfunction are
\begin{eqnarray*}
\lambda = d \pi^2, \ \ \ \ \ u(x)=\prod_{i=1}^d\sin(\pi x_i).
\end{eqnarray*}
First, we test high-dimensional cases with $d=5,10,20$, respectively.
Quadrature scheme for TNN is obtained by decomposing each $\Omega_i$ ($i=1, \cdots, d$) into $10$ equal subintervals and choosing $16$ Gauss points on each subinterval.
The Adam optimizer is employed with a learning rate of 0.003 to train \revise{a TNN with $p=10$. Each subnetwork of TNN is a FNN with two hidden layers and each hidden layer has 50 hidden neurons, see Figure \ref{TNNstructure}.} Figure \ref{fig_laplace_high} shows the relative errors $e_\lambda$, $e_{L^2}$ and $e_{H^1}$ versus the number of epochs. The final relative errors after 100000 epochs are reported in Table \ref{table_laplace_high} for different dimensional cases. Then we can find that the TNN method has almost the same convergence behaviors for different dimensions.

\begin{figure}[htb]
\centering
\includegraphics[width=4cm,height=4cm]{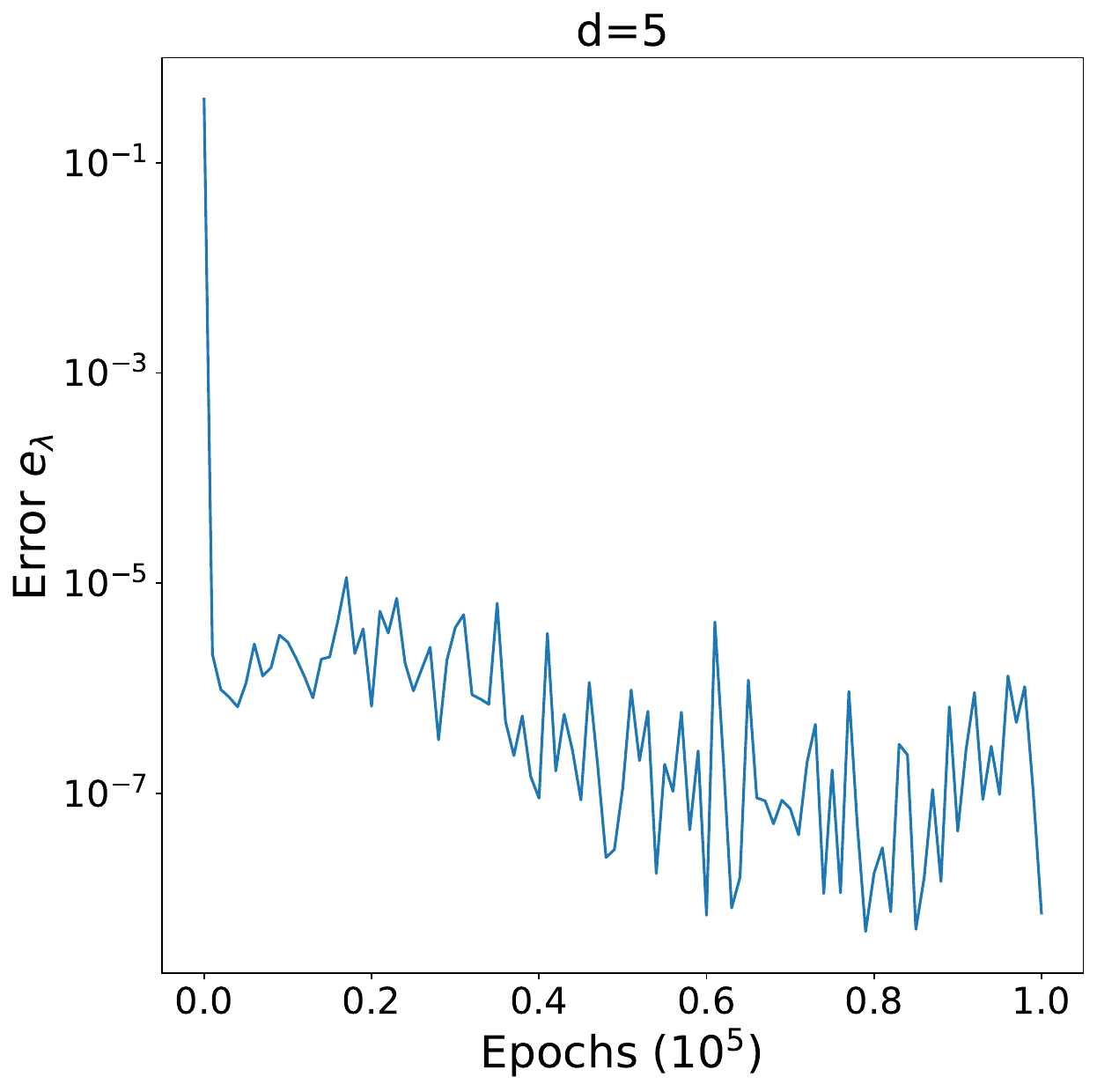}
\includegraphics[width=4cm,height=4cm]{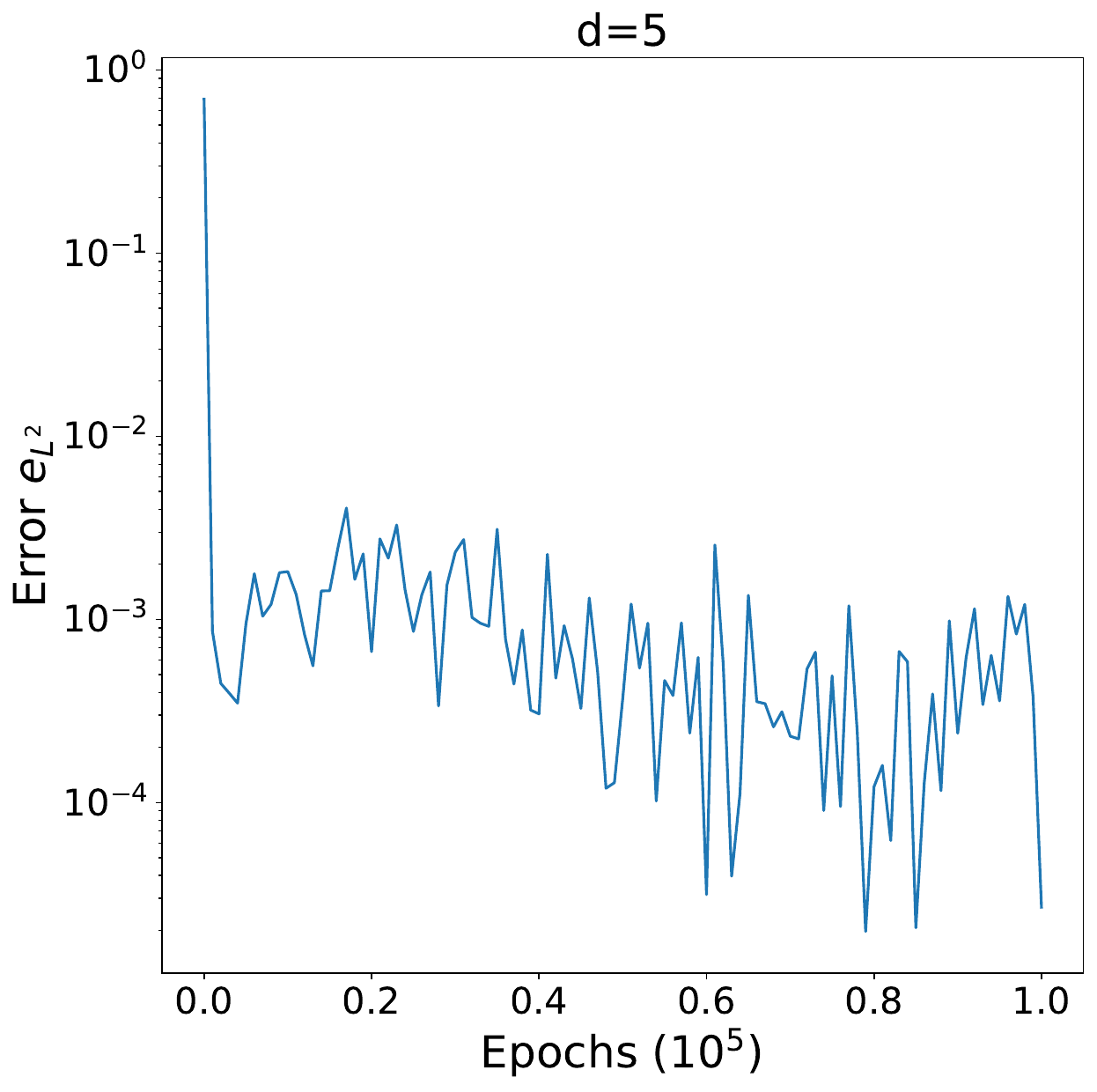}
\includegraphics[width=4cm,height=4cm]{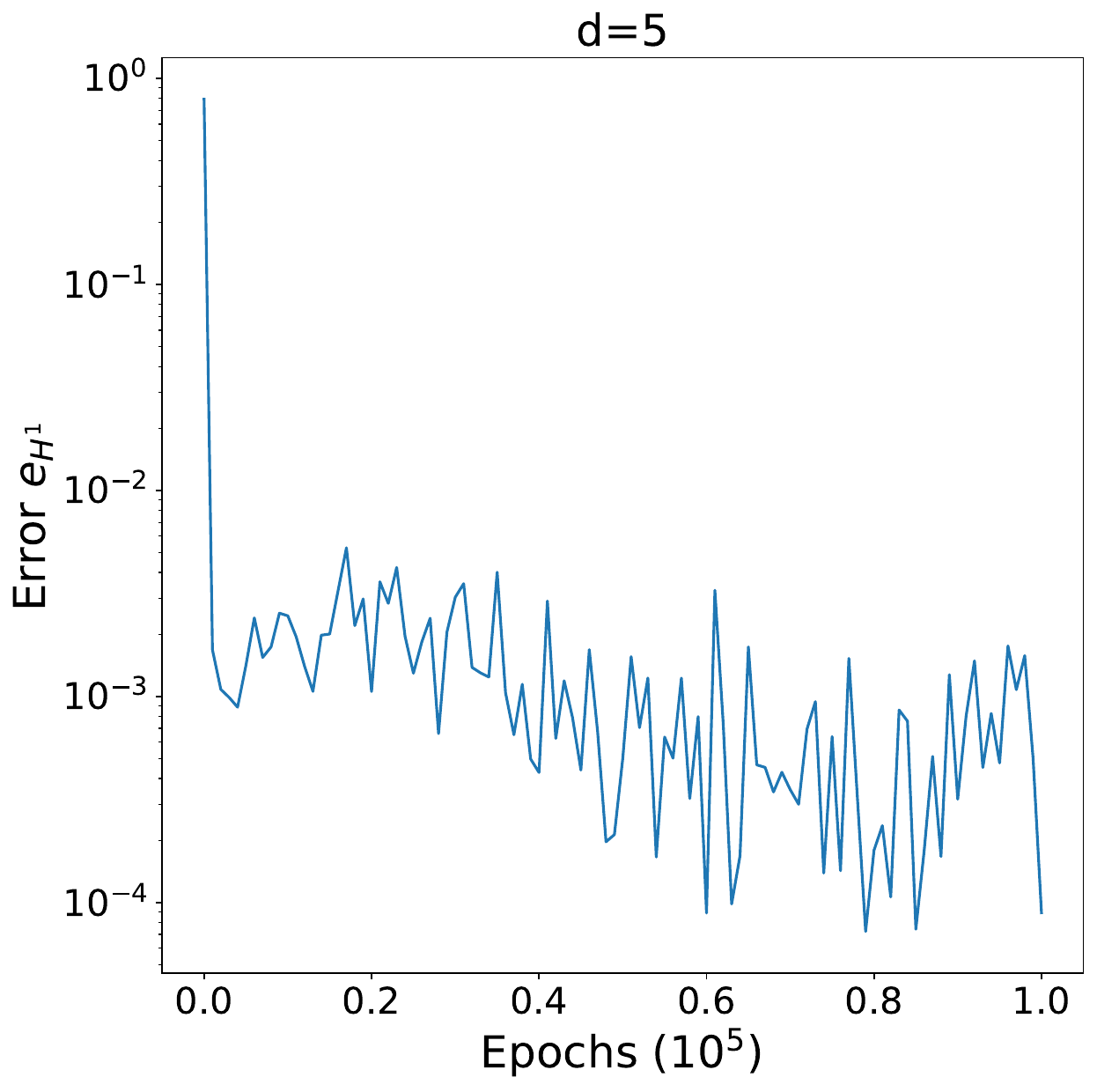}\\
\includegraphics[width=4cm,height=4cm]{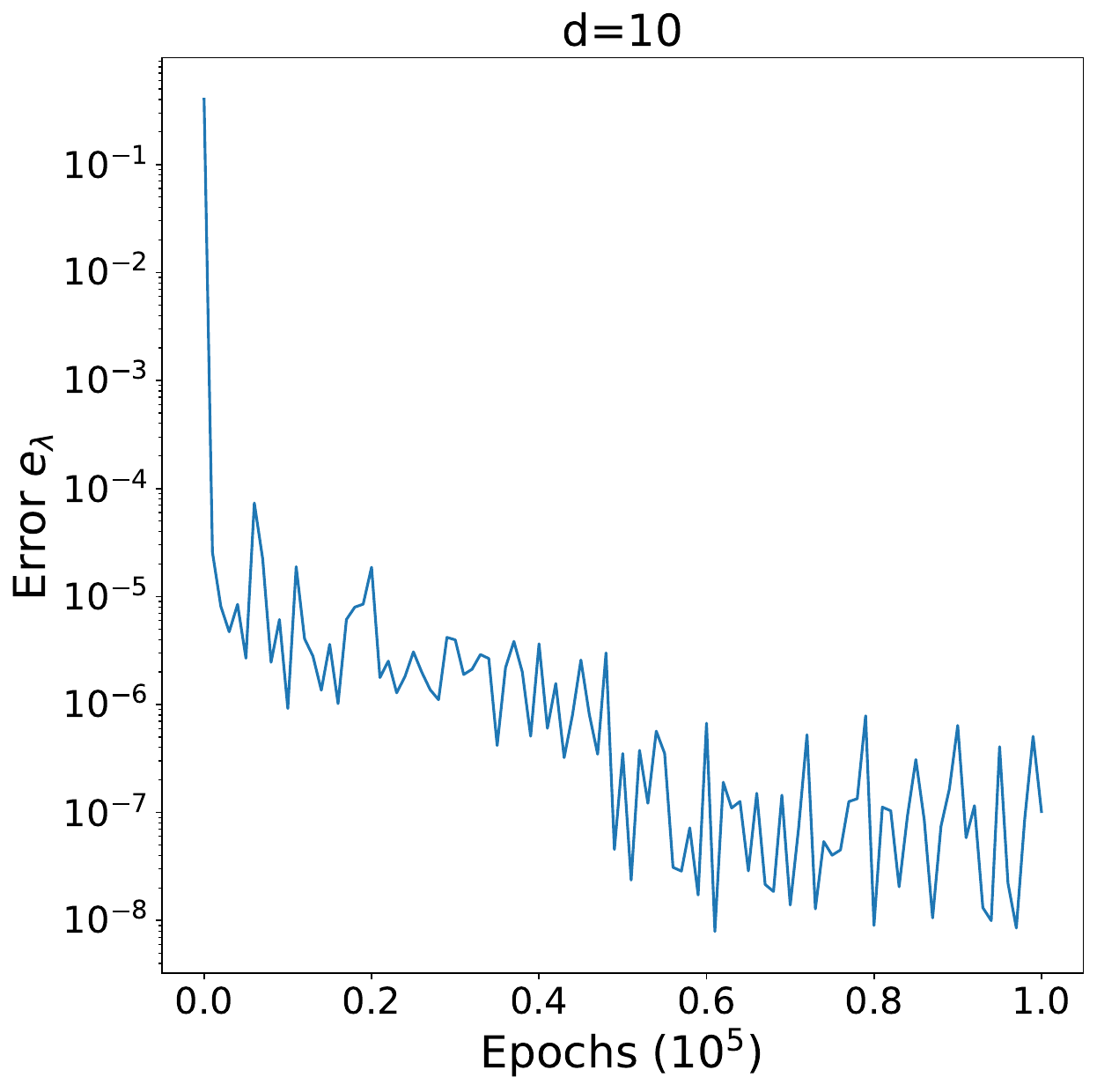}
\includegraphics[width=4cm,height=4cm]{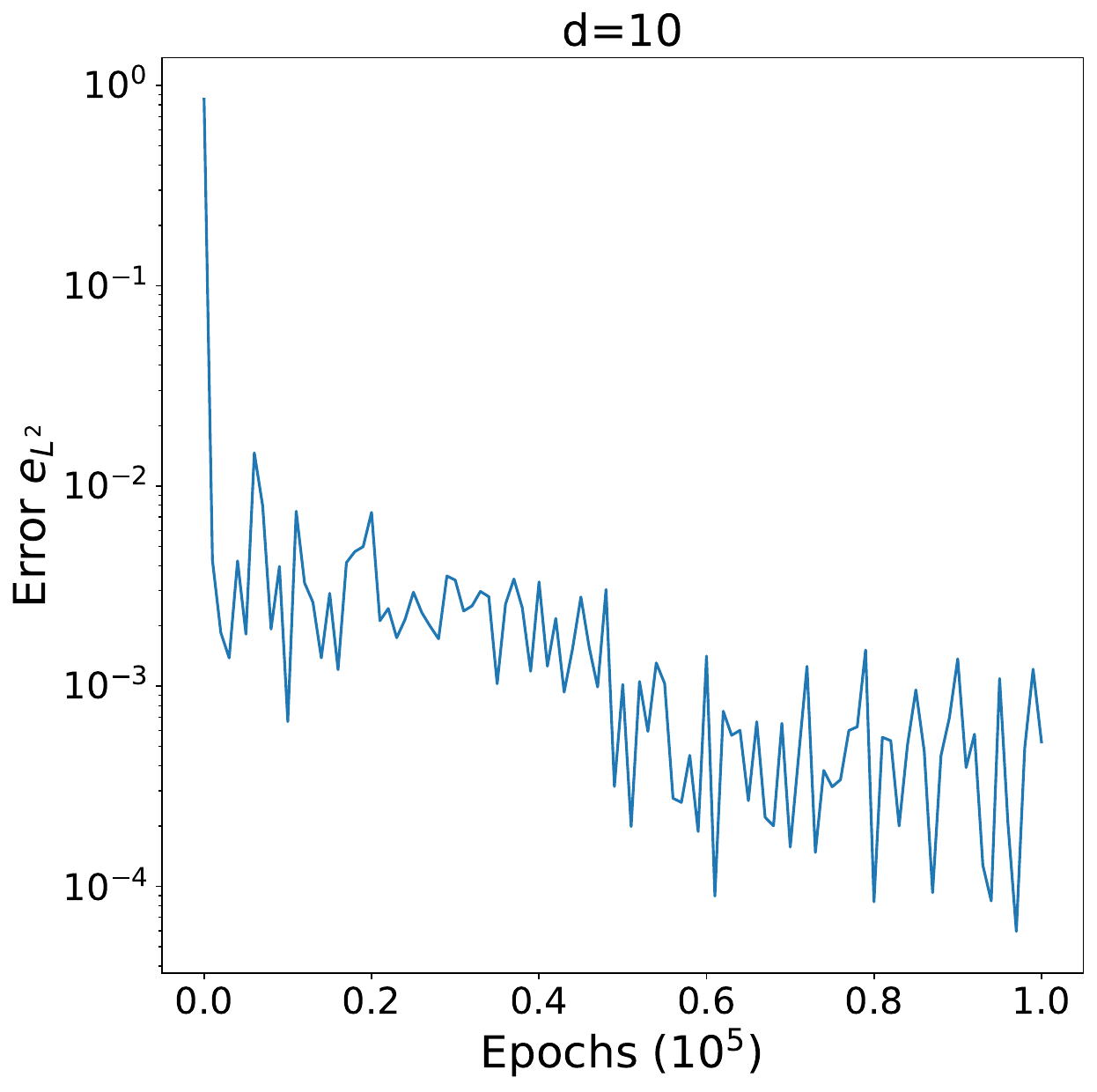}
\includegraphics[width=4cm,height=4cm]{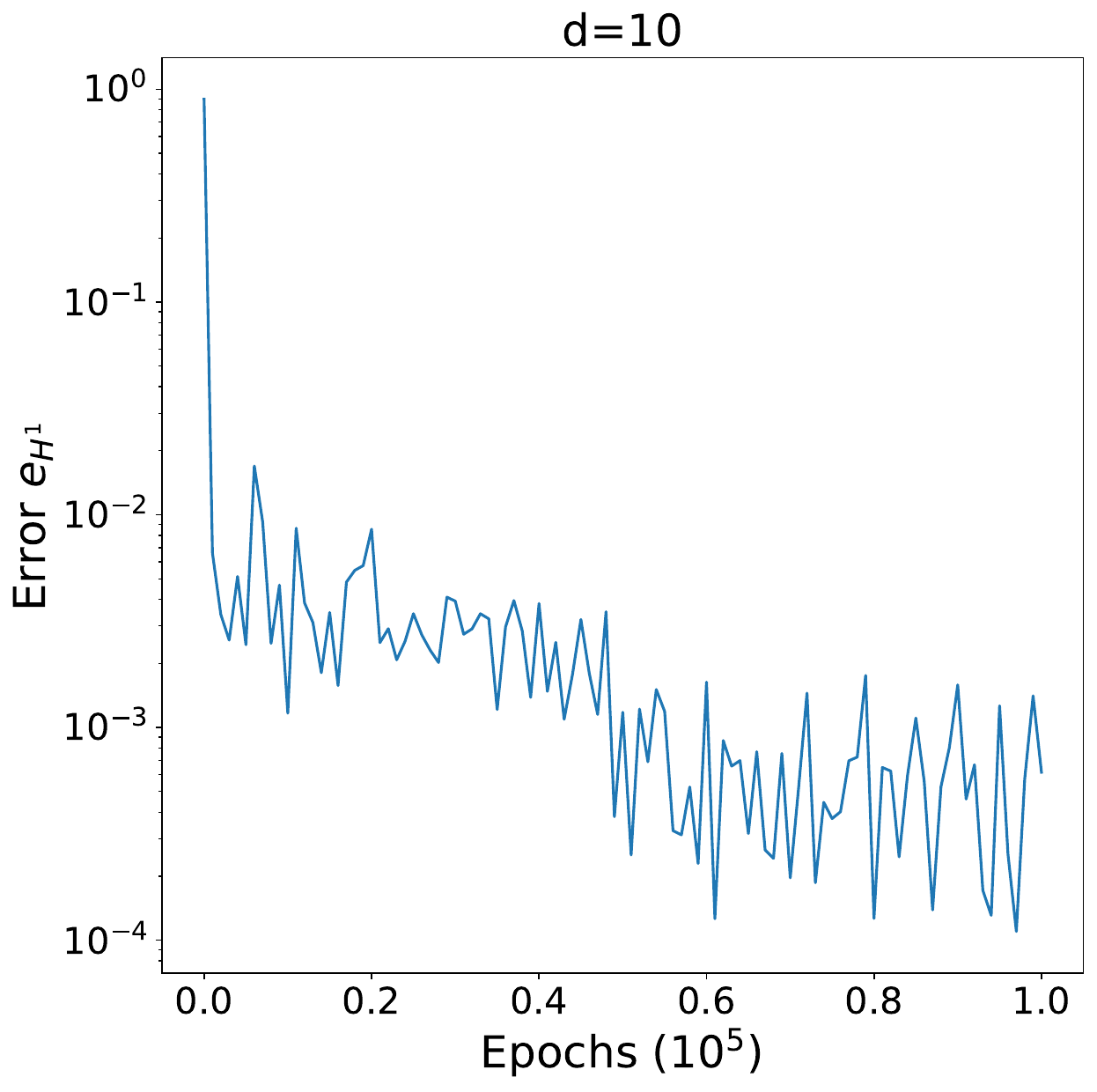}\\
\includegraphics[width=4cm,height=4cm]{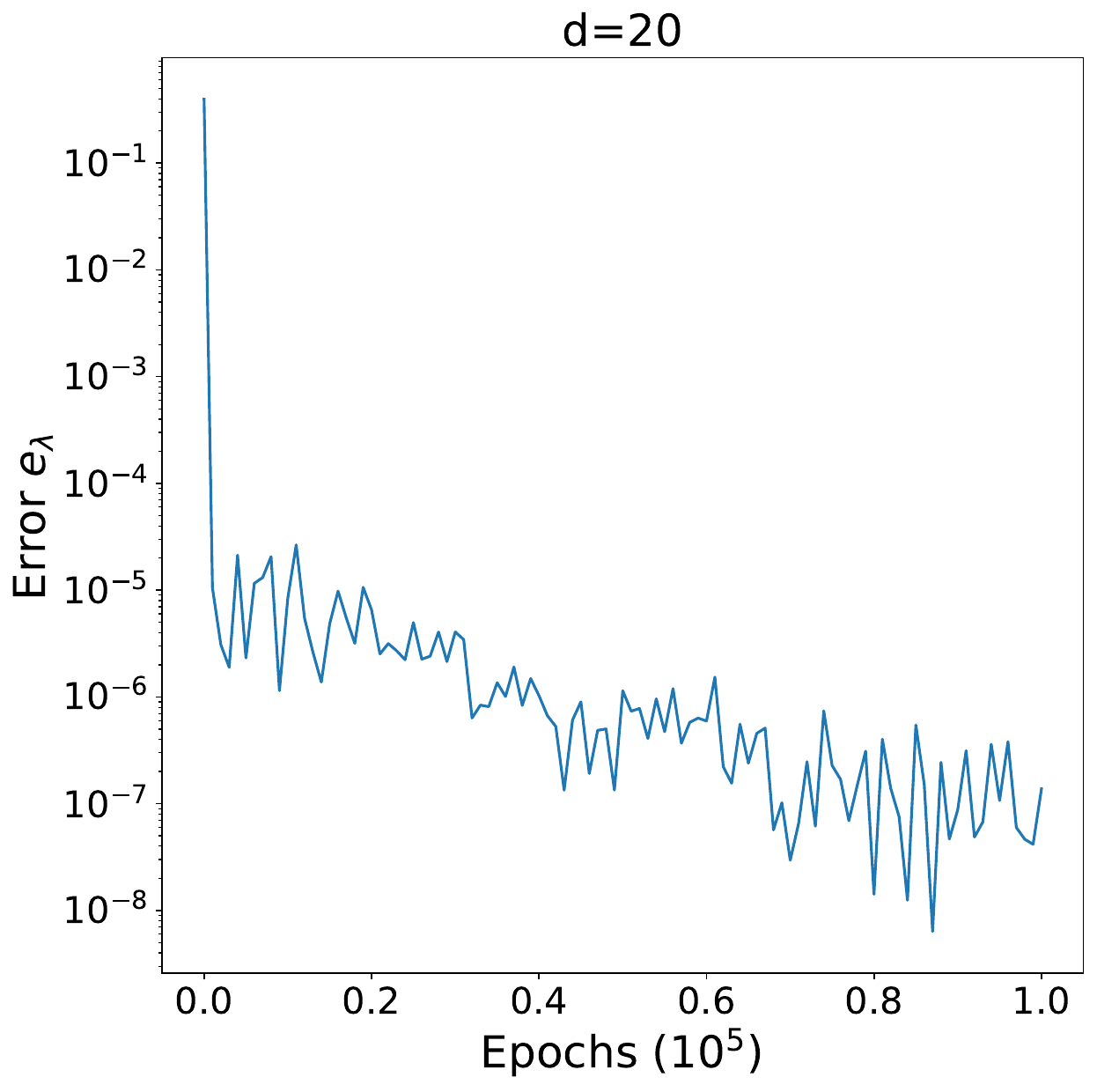}
\includegraphics[width=4cm,height=4cm]{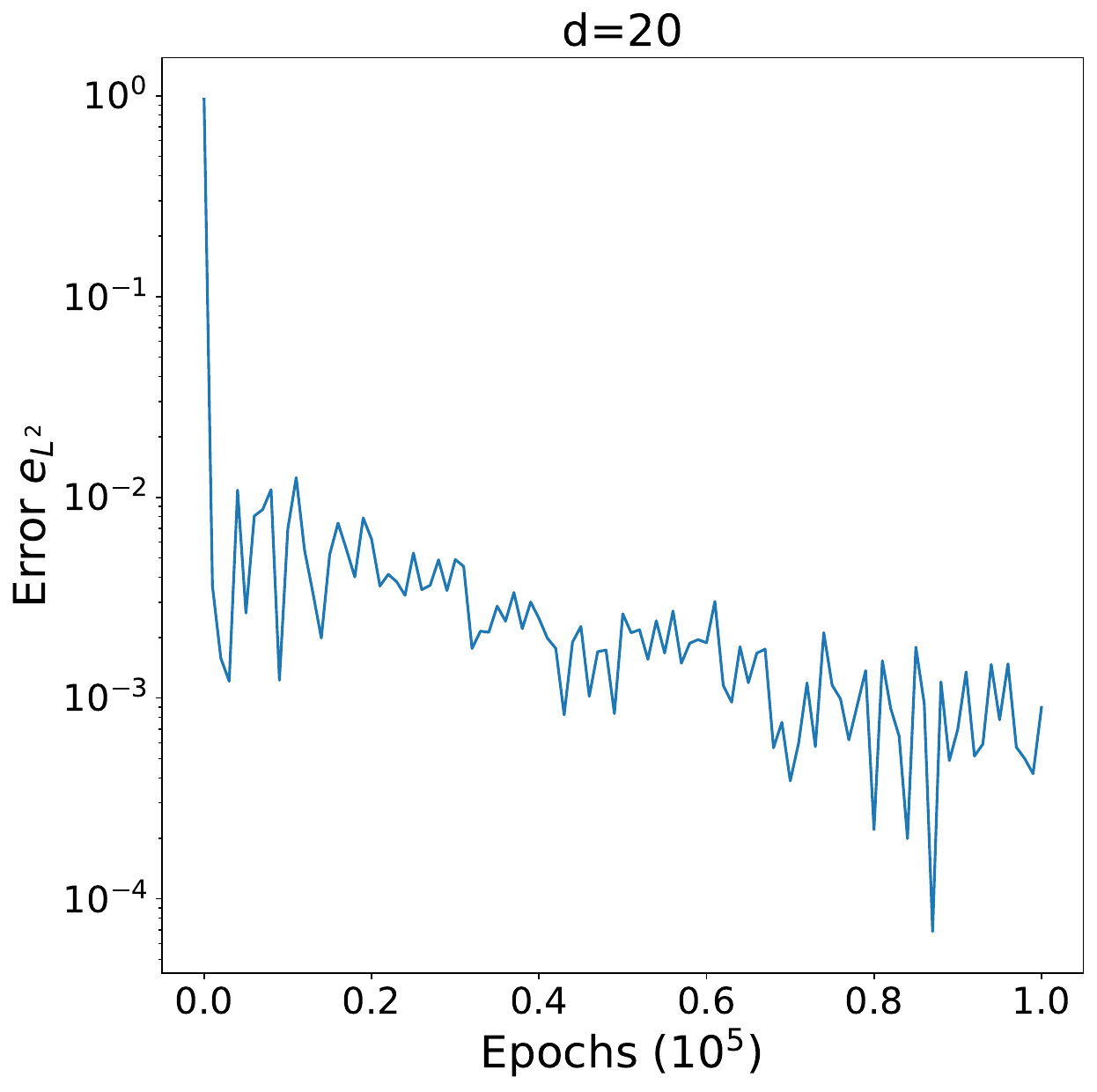}
\includegraphics[width=4cm,height=4cm]{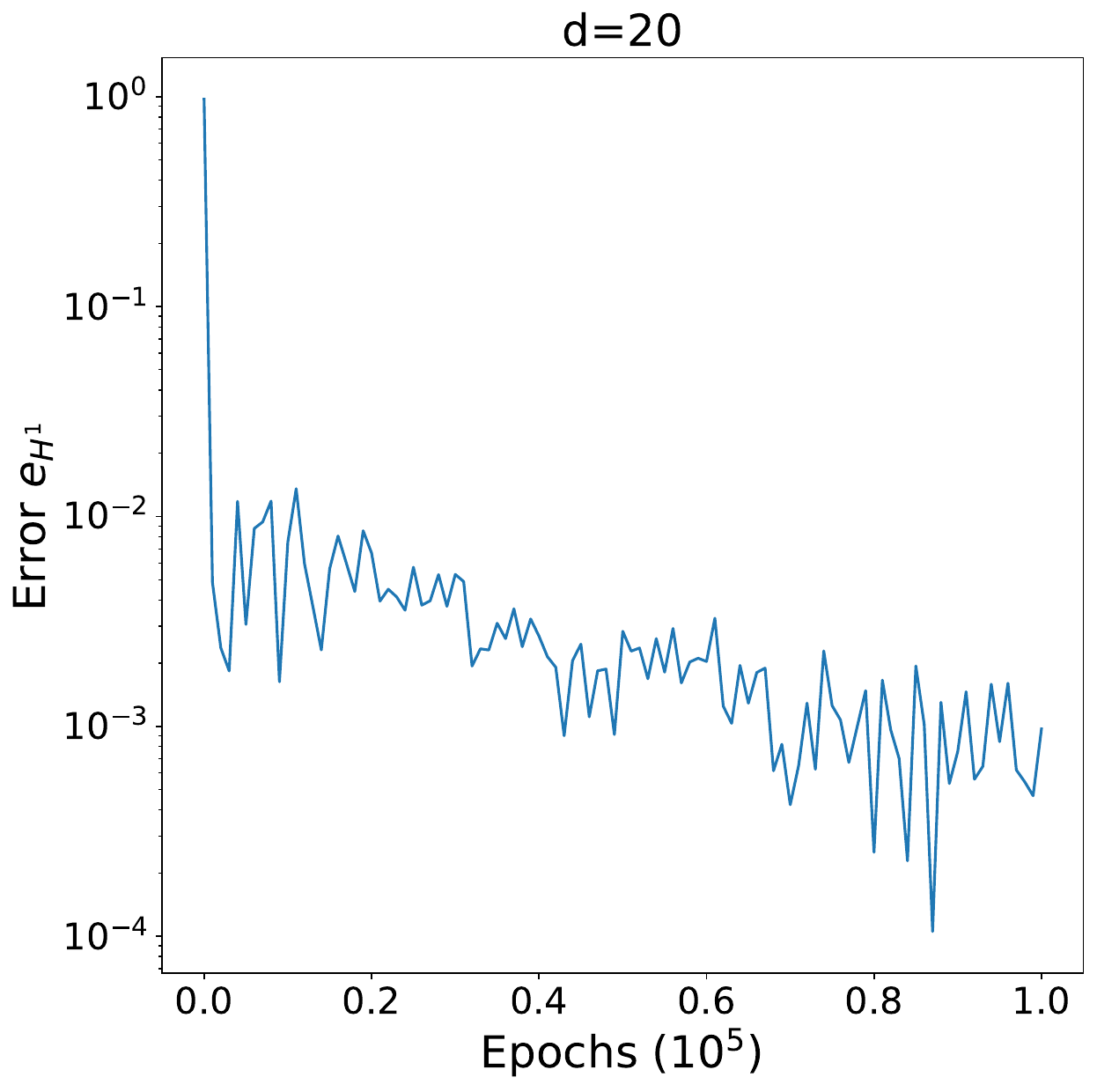}\\
\caption{Relative errors during the training process for Laplace eigenvalue problem: for $d=5$, $10$, and $20$. The left column shows the relative errors of eigenvalue approximations, the middle column shows the relative $L^2(\Omega)$ errors and the right column shows the relative $H^1(\Omega)$ errors of eigenfunction approximations.}\label{fig_laplace_high}
\end{figure}

\begin{table}[htb!]
\caption{Errors of Laplace eigenvalue problem for $d=5,10,20$.}\label{table_laplace_high}
\begin{center}
\begin{tabular}{ccccc}
\hline
$d$&   $e_{\lambda}$&   $e_{L^2}$&   $e_{H^1}$\\
\hline
5&   4.838e-09&   1.977e-05&   7.231e-05\\
10&   7.916e-09&   8.941e-05&   1.261e-04\\
20&   6.354e-09&   6.872e-05&   1.052e-04\\
\hline
\end{tabular}
\end{center}
\end{table}
Then we test ultra-high-dimensional cases with $d=128,256,512$, respectively.
Although the usual problem doesn't have such a high dimension,
it is important to point out that since the TNN structure (\ref{def_TNN})
includes the compound of $d$ terms, in ultra-high-dimensional cases,
numerical instability may occur. To improve the numerical stability,
we do a suitable scale for each dimension of TNN at the initialization step.
In implement, we decompose each $\Omega_i$ ($i=1, \cdots, d$) into $10$ equal subintervals and choose $16$ Gauss points on each subinterval. The Adam optimizer is employed with learning rate 0.003 to train a smaller scale TNN with $p=10$. \revise{Each subnetwork in of the TNN is an FNN with two hidden layers and
each hidden layer has 20 hidden neurons.}
\revise{We use the Adam optimizer in the first
100000 steps and then the L-BFGS in the subsequent 10000 steps to produce the final results.}
The final results are shown in Figure \ref{fig_laplace_ultra_high} and Table \ref{table_laplace_ultra_high}. In ultra-high-dimensional cases, the TNN method still has almost the same convergence behaviors for different dimensions and the final results are not much worse than that in high-dimensional cases. All relative errors are on a convincing order of magnitude.

\begin{figure}[htb]
\centering
\includegraphics[width=4cm,height=4cm]{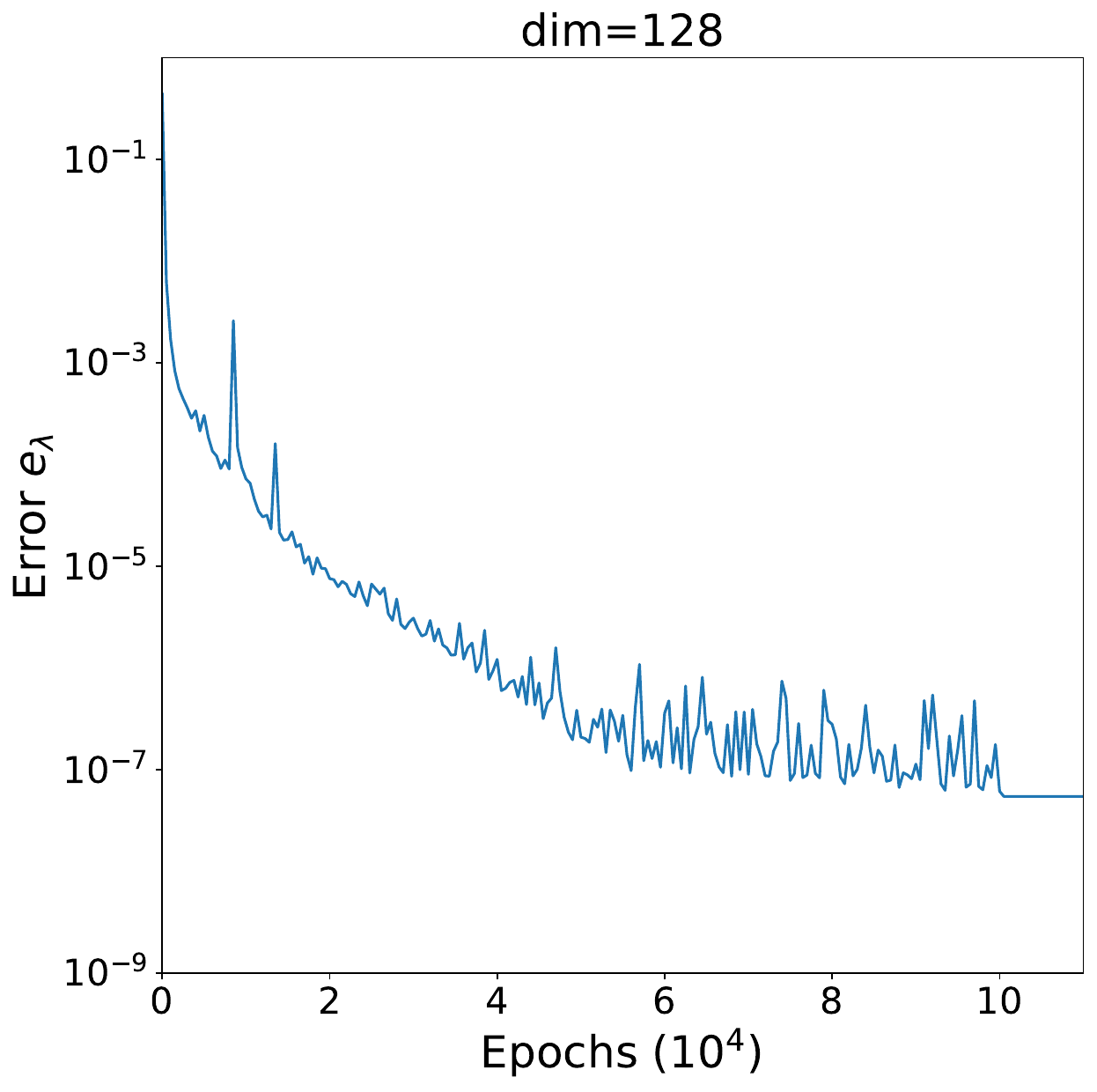}
\includegraphics[width=4cm,height=4cm]{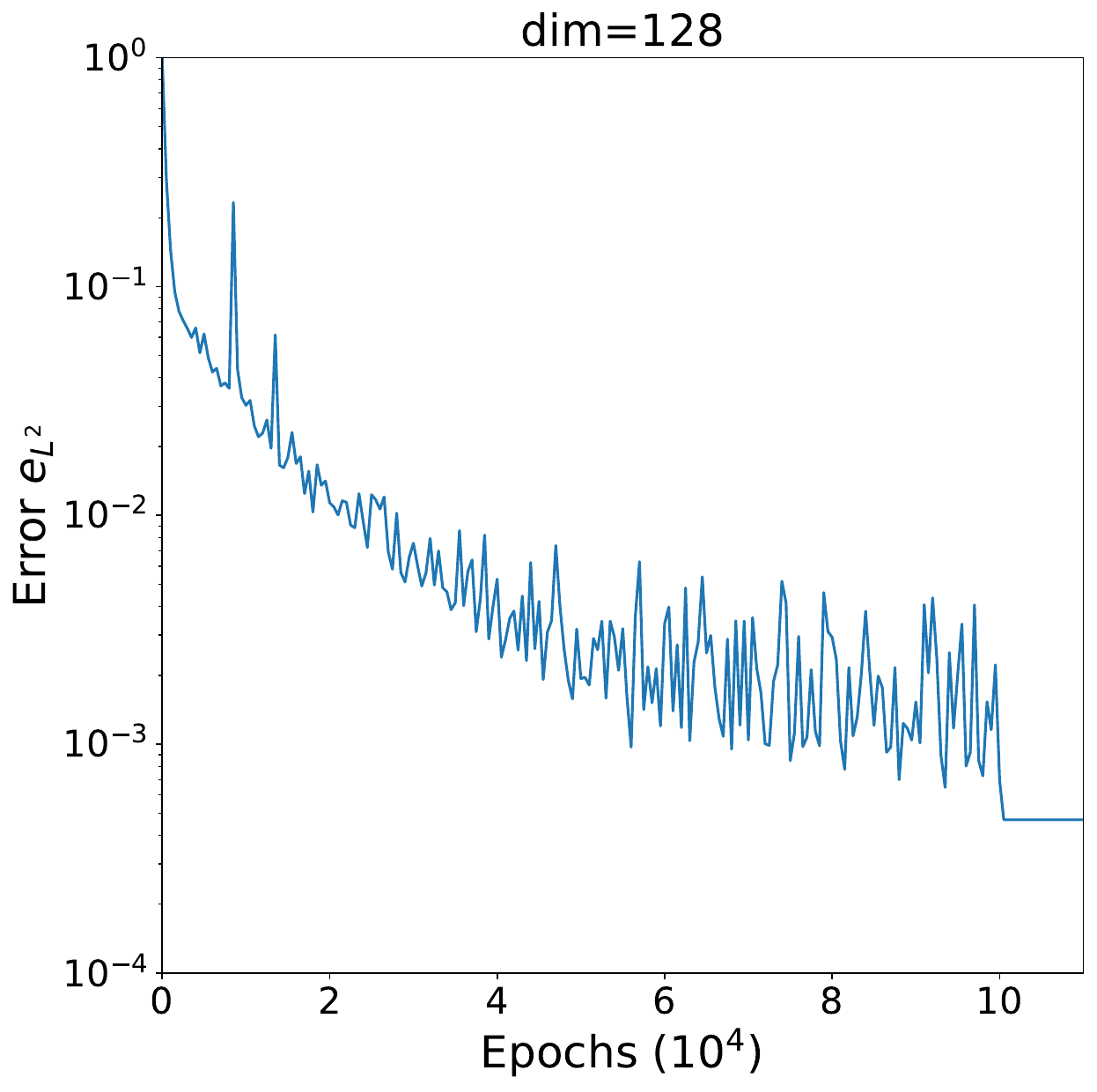}
\includegraphics[width=4cm,height=4cm]{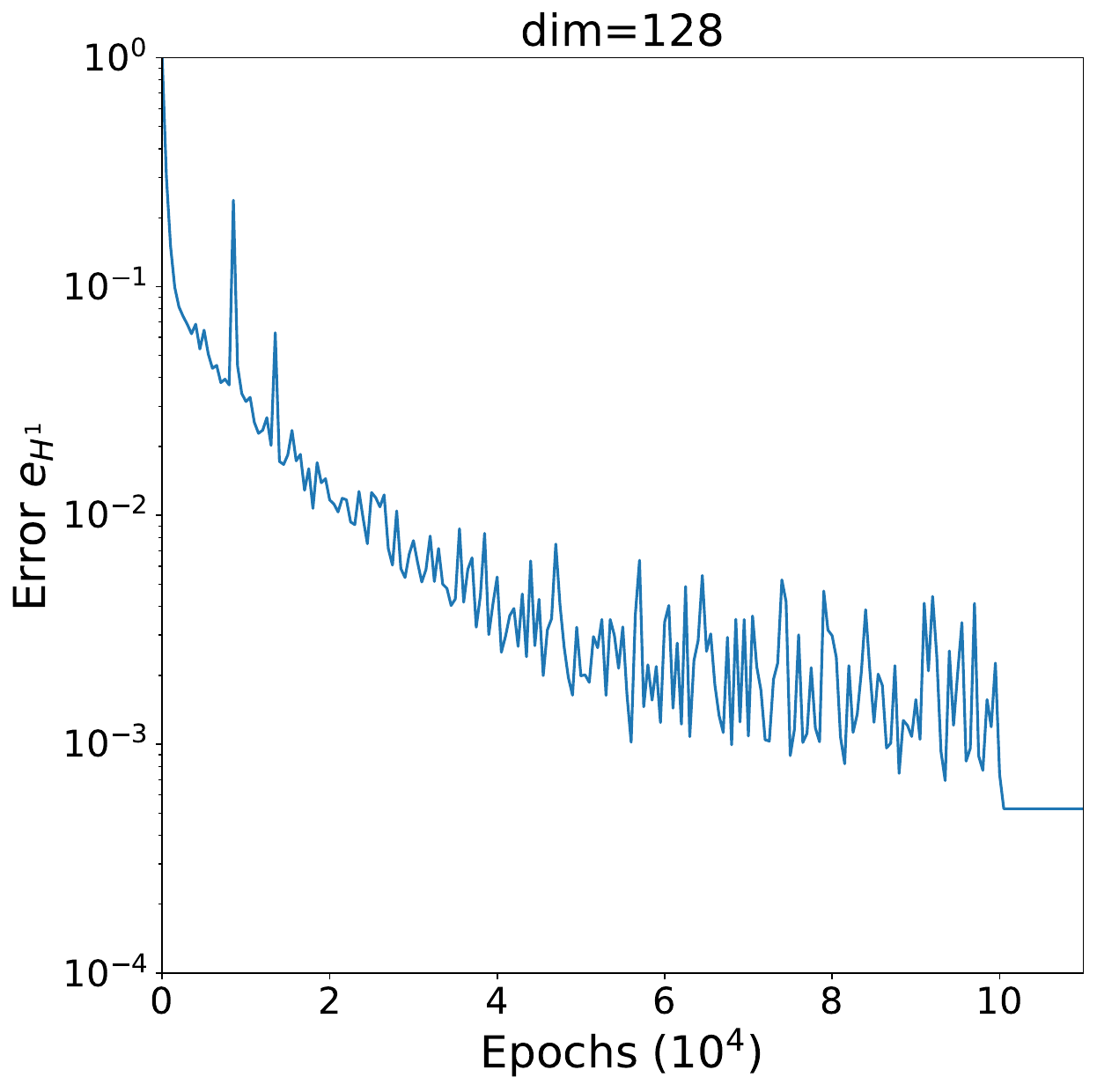}\\
\includegraphics[width=4cm,height=4cm]{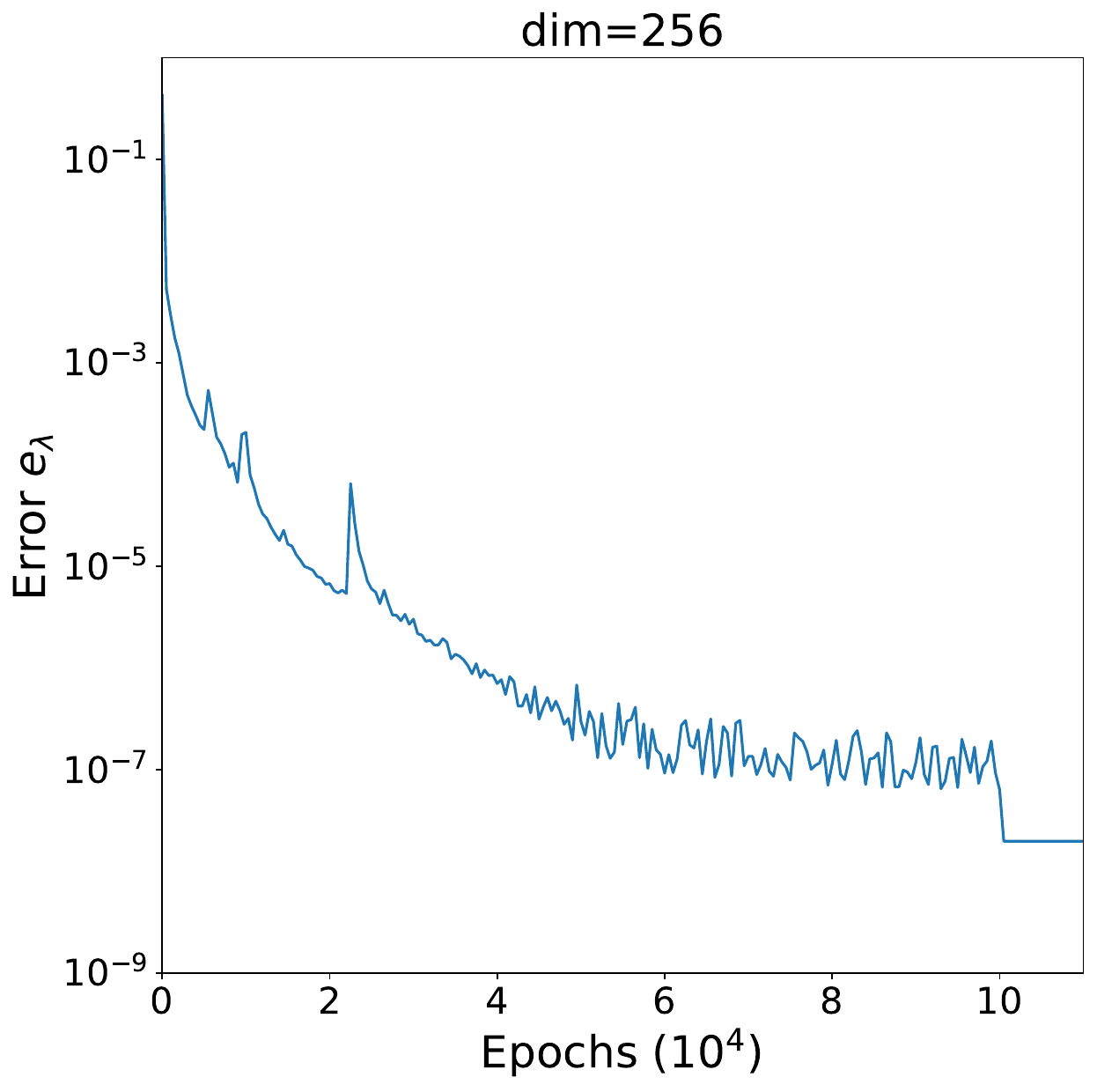}
\includegraphics[width=4cm,height=4cm]{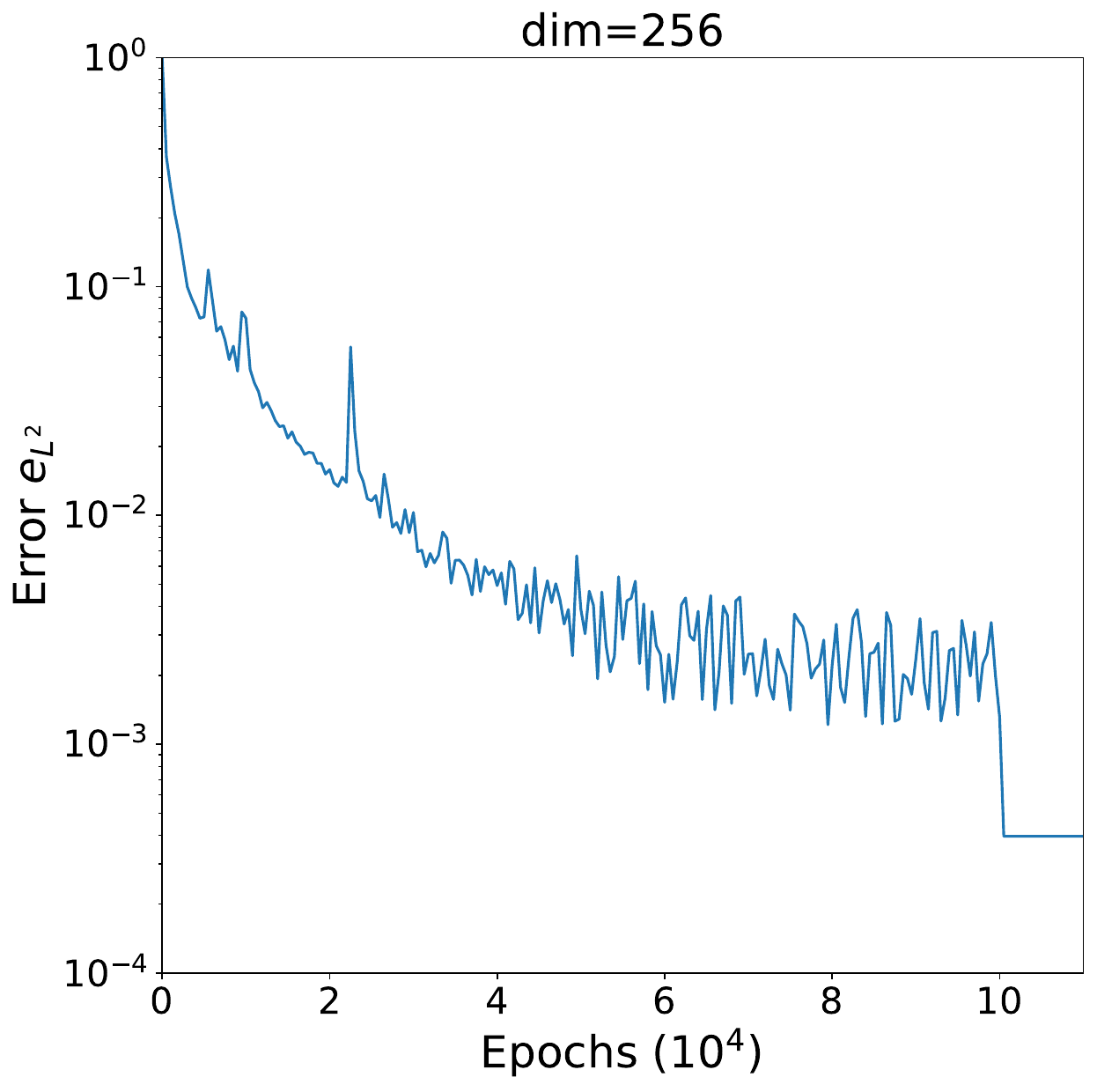}
\includegraphics[width=4cm,height=4cm]{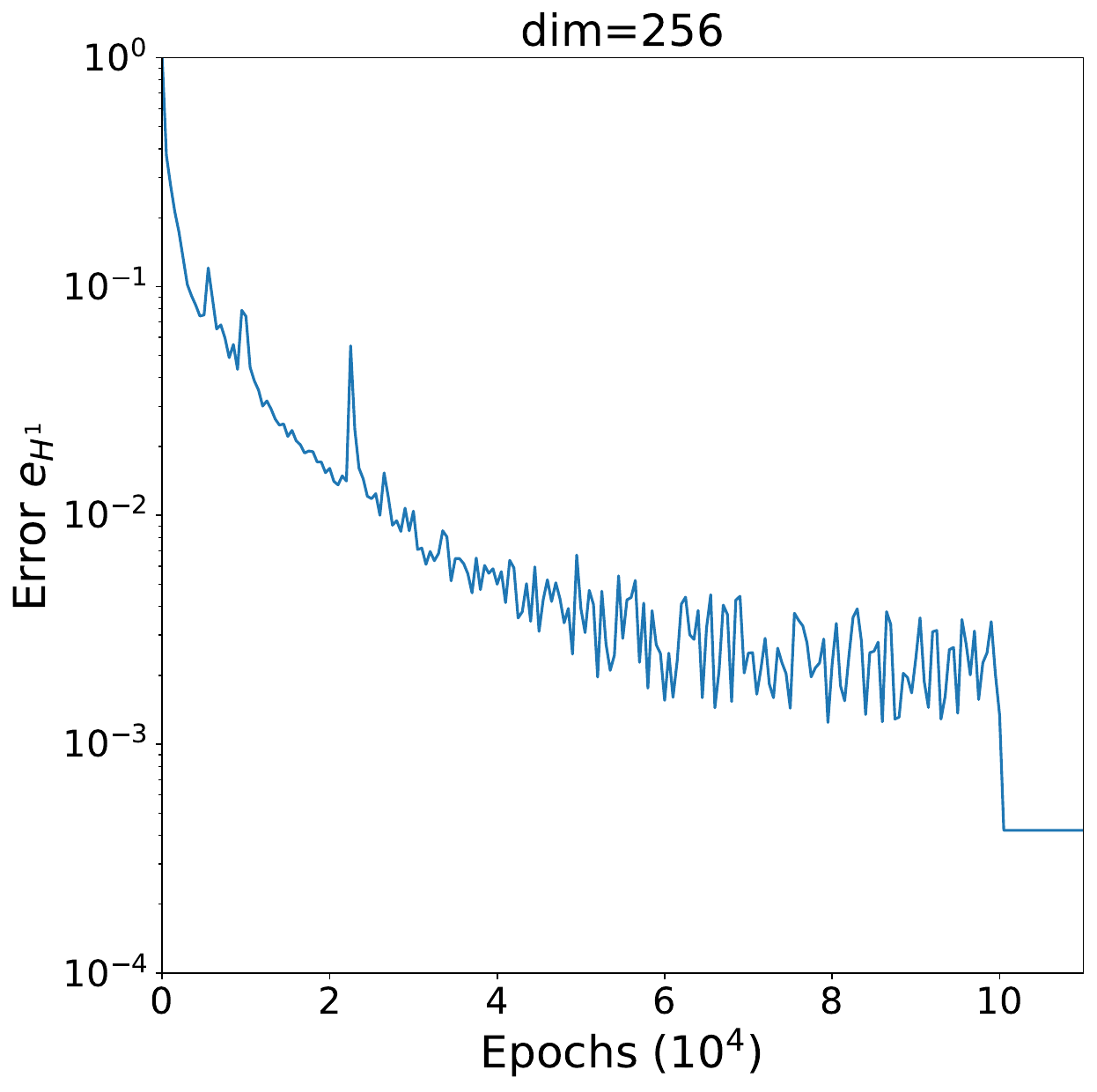}\\
\includegraphics[width=4cm,height=4cm]{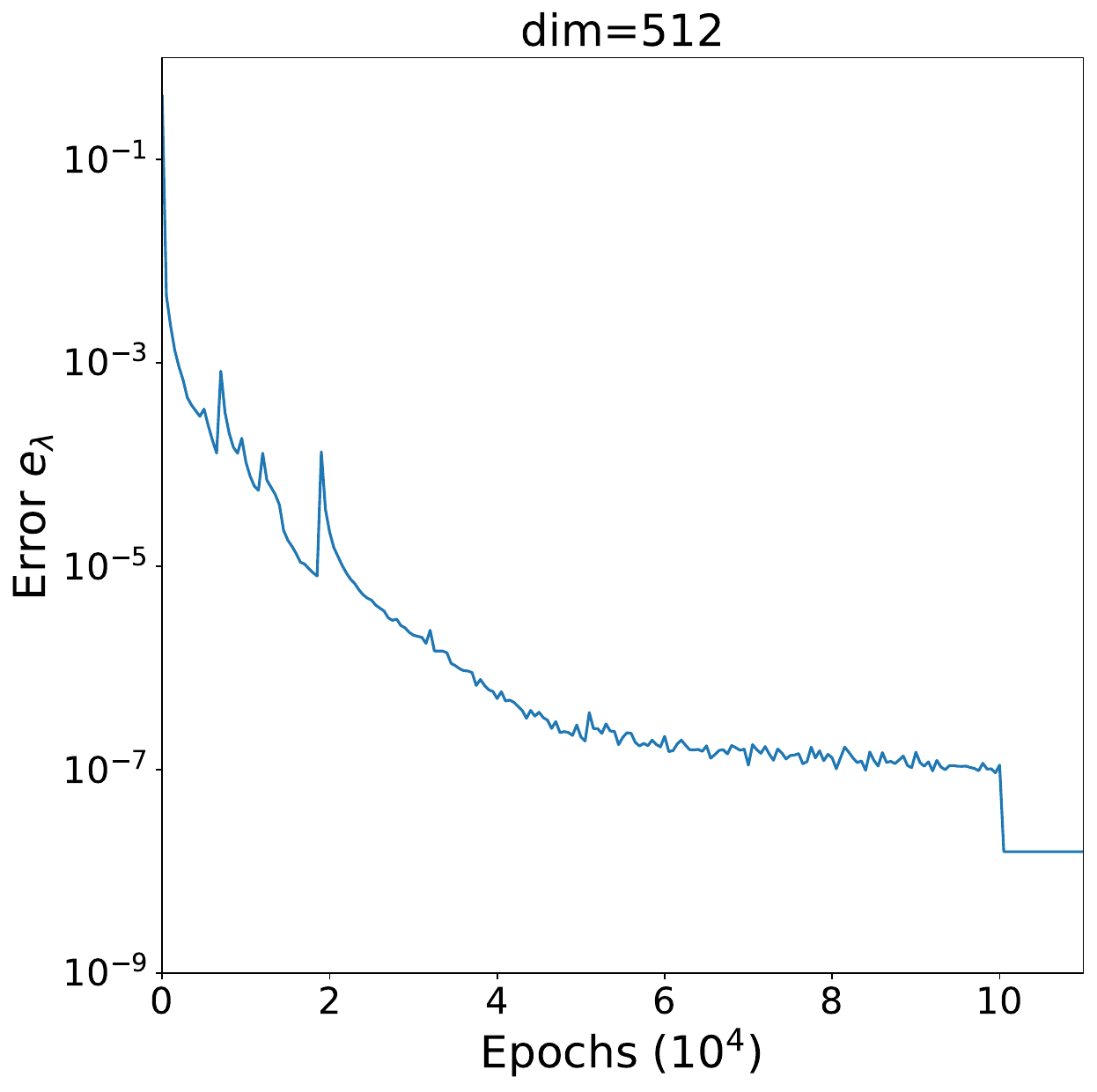}
\includegraphics[width=4cm,height=4cm]{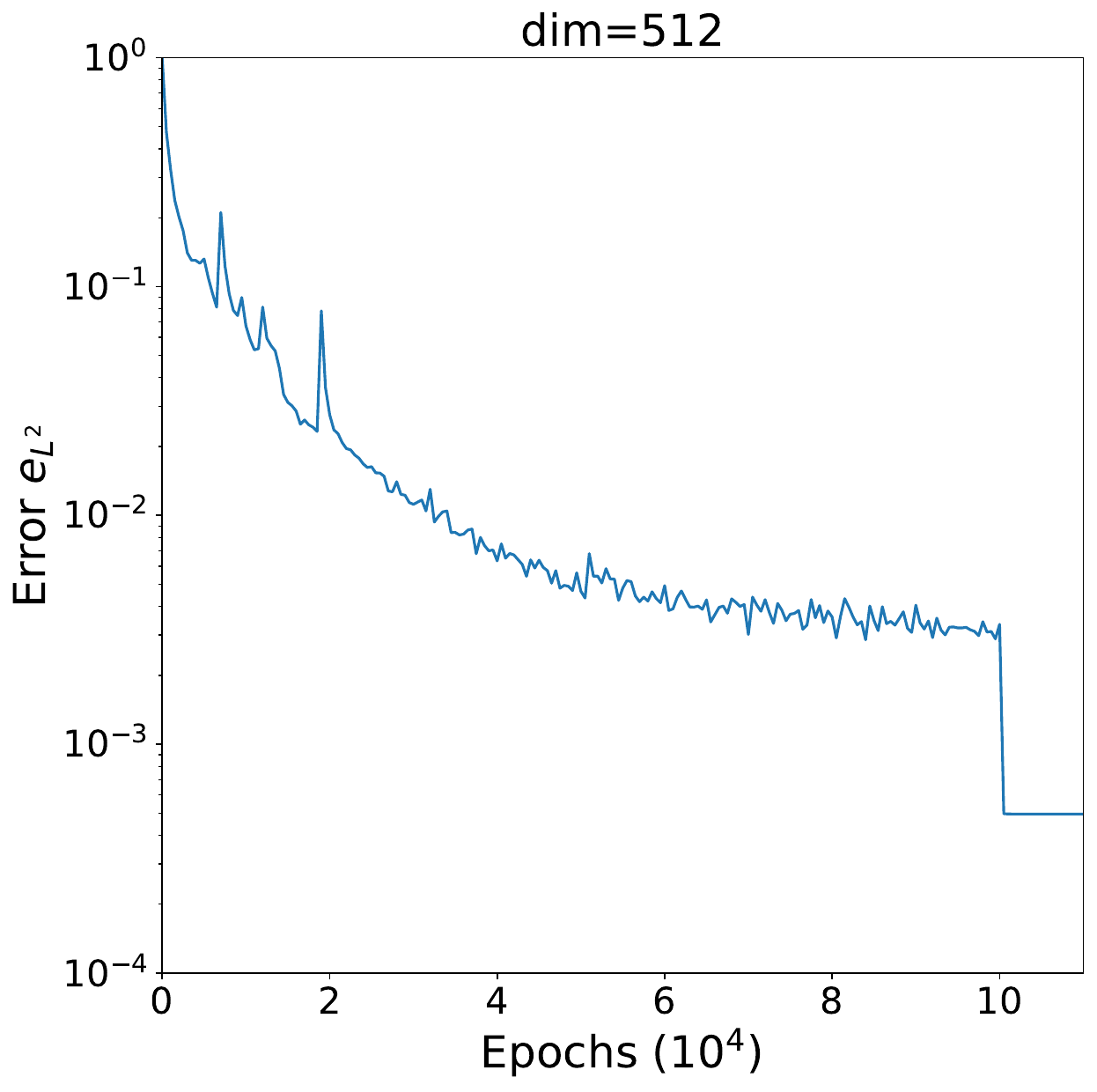}
\includegraphics[width=4cm,height=4cm]{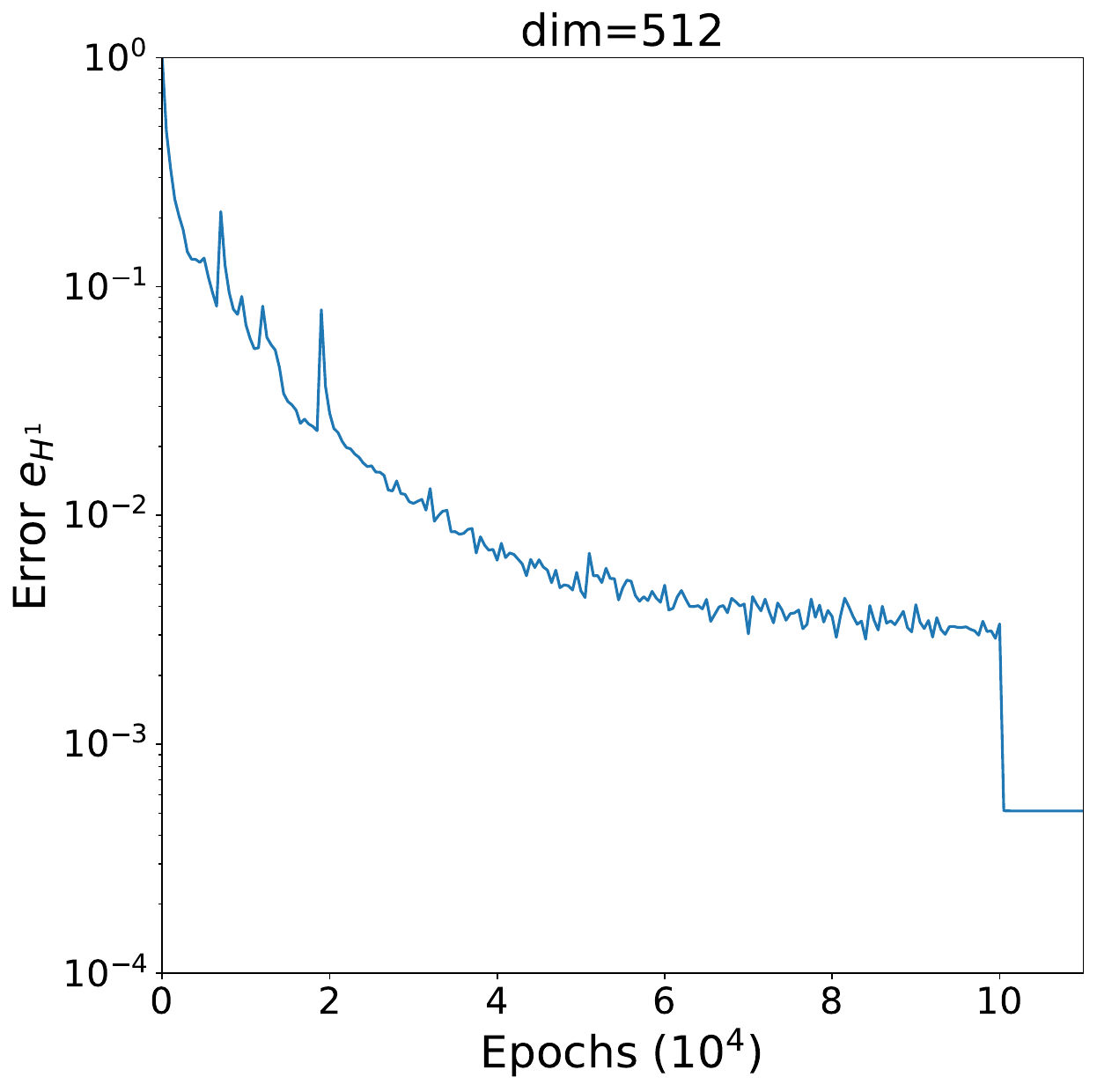}\\
\caption{\revise{Relative errors during the training process for Laplace eigenvalue problem: for $d=128$,
$256$ and $512$. The left column shows the relative errors of eigenvalue approximations,
the middle column shows the relative $L^2(\Omega)$ errors and the right column shows
the relative $H^1(\Omega)$ errors of eigenfunction approximations.}}\label{fig_laplace_ultra_high}
\end{figure}

\begin{table}[htb!]
\caption{\revise{Errors of Laplace eigenvalue problem for $d=128,256,512$.}}\label{table_laplace_ultra_high}
\begin{center}
\begin{tabular}{ccccc}
\hline
$d$&   $e_{\lambda}$&   $e_{L^2}$&   $e_{H^1}$\\
\hline
128&   5.462e-08&   4.676e-04&   5.223e-04\\
256&   1.980e-08&   3.962e-04&   4.205e-04\\
512&   1.560e-08&   4.956e-04&   5.111e-04\\
\hline
\end{tabular}
\end{center}
\end{table}

\subsection{Eigenvalue problem with harmonic oscillator}\label{Section_harmonic}
In the second example, the potential function is defined as (\ref{harmonic oscillator}).
Then the exact smallest eigenvalue and eigenfunction are
\begin{eqnarray*}
\lambda=d,\ \ \ u(x)=\prod_{i=1}^de^{-x_i^2/2}.
\end{eqnarray*}

As the first example in Section \ref{Section_laplace}, high-dimensional cases
with $d=5,10,20$ and ultra-high-dimensional cases with $d=128, 256, 512$
are also tested, respectively. We truncate the computational domain from $\mathbb R^d$ to $[-5,5]^d$,
use 100 equal subintervals and 16 Gauss points quadrature scheme for all cases.
The Adam optimizer is employed to train
TNN of the same size as the first example but with learning rate of $0.01$ and $0.003$
for high-dimensional cases and ultra-high-dimensional cases, respectively.
\revise{For high-dimensional cases, the Adam optimizer is used with $100000$ steps}.
\revise{For ultra-high-dimensional cases, we use the Adam optimizer in the first 50000 steps
and then the L-BFGS in the subsequent 10000 steps to produce the final results.}
Numerical results for $d=5, 10, 20$ are shown in Figure \ref{fig_harmonic_high}
and Table \ref{table_harmonic_high} and that for $d=128, 256, 512$ are shown
in Figure \ref{fig_harmonic_ultra_high} and Table \ref{table_harmonic_ultra_high}.
Since we truncate the computational domain, it is reasonable that the final relative
errors are a little worse than the examples of the Laplace eigenvalue problem.
There should exist some room for improving the accuracy.

\begin{figure}[htb!]
\centering
\includegraphics[width=4cm,height=4cm]{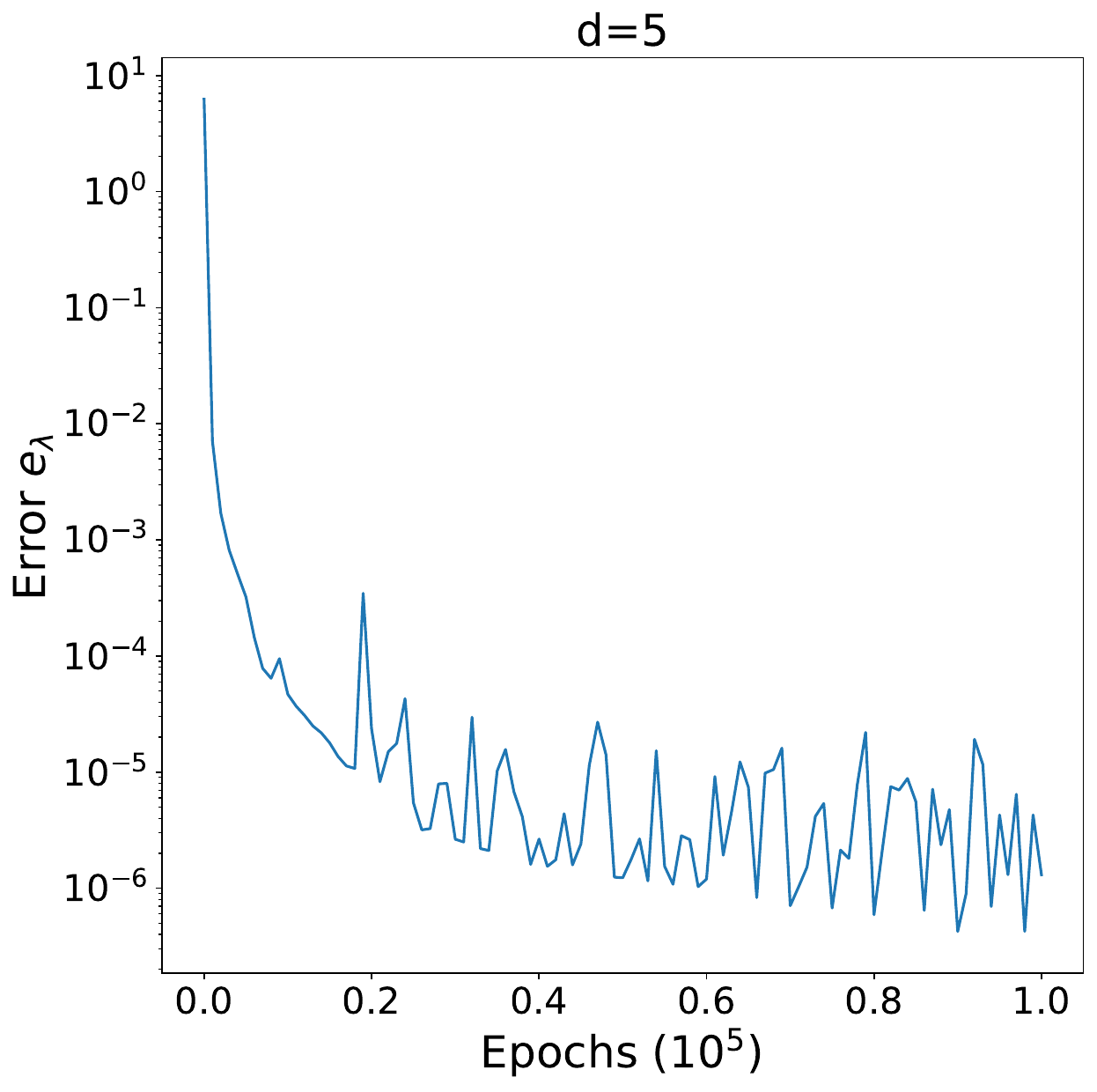}
\includegraphics[width=4cm,height=4cm]{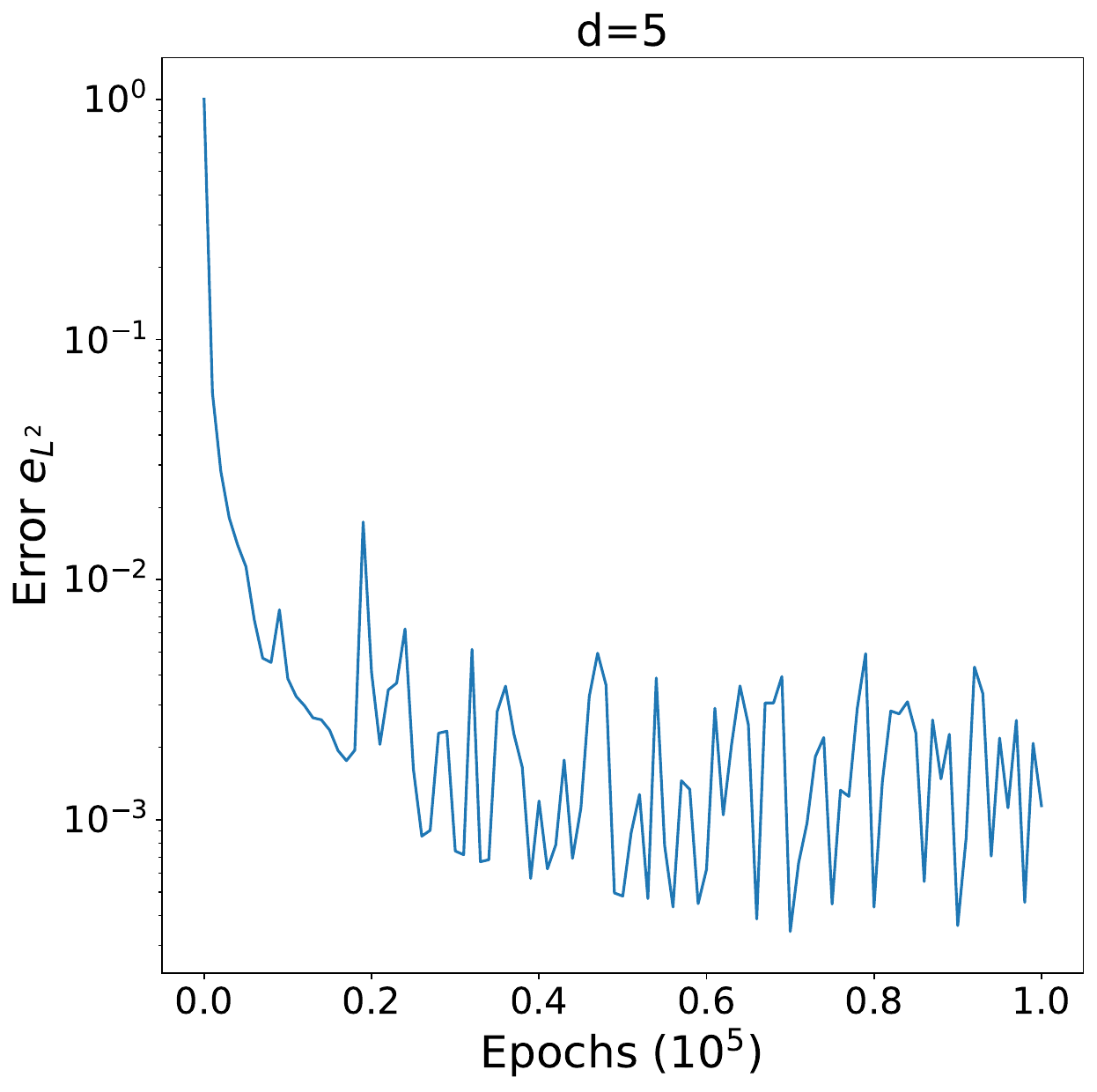}
\includegraphics[width=4cm,height=4cm]{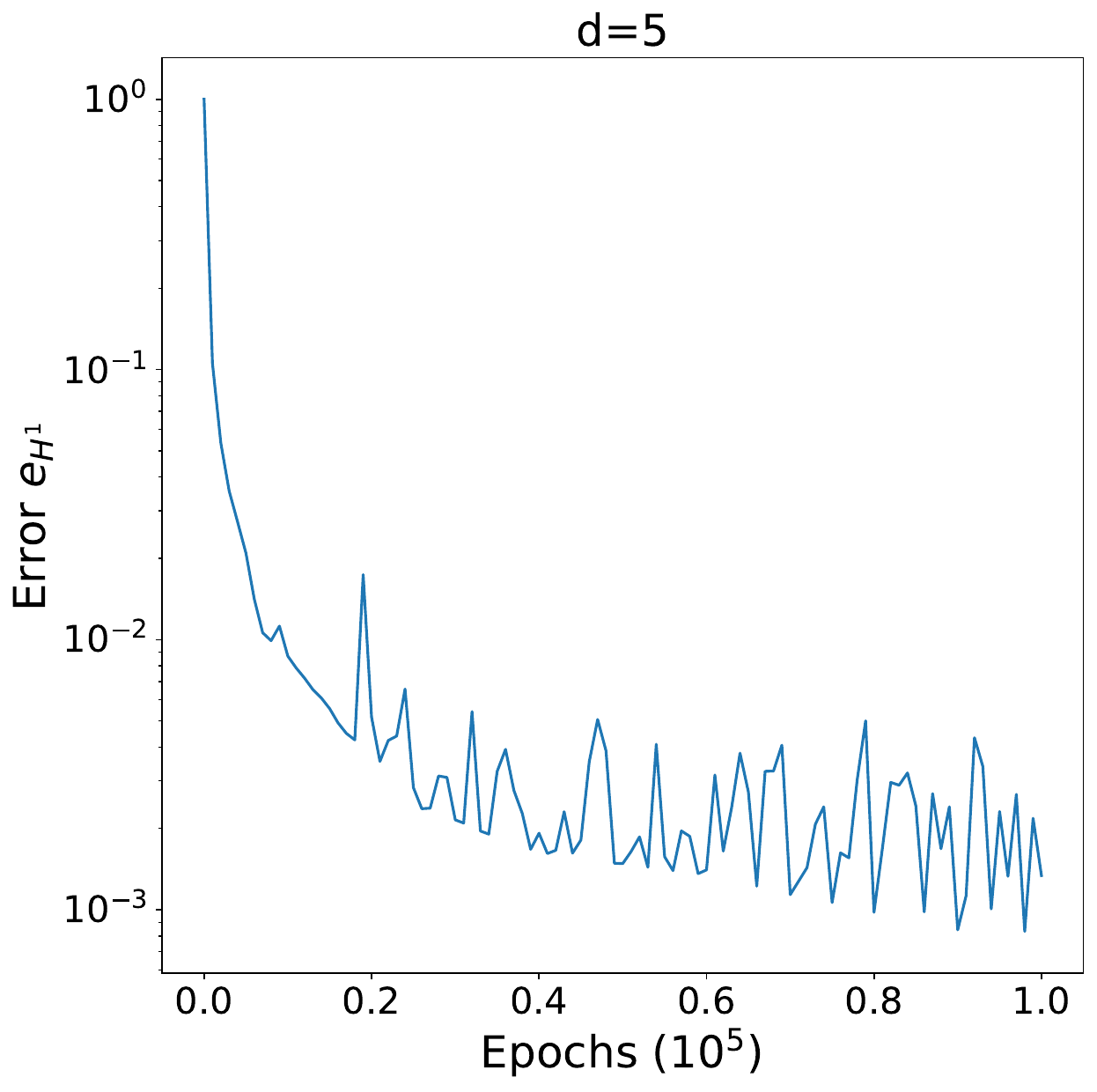}\\
\includegraphics[width=4cm,height=4cm]{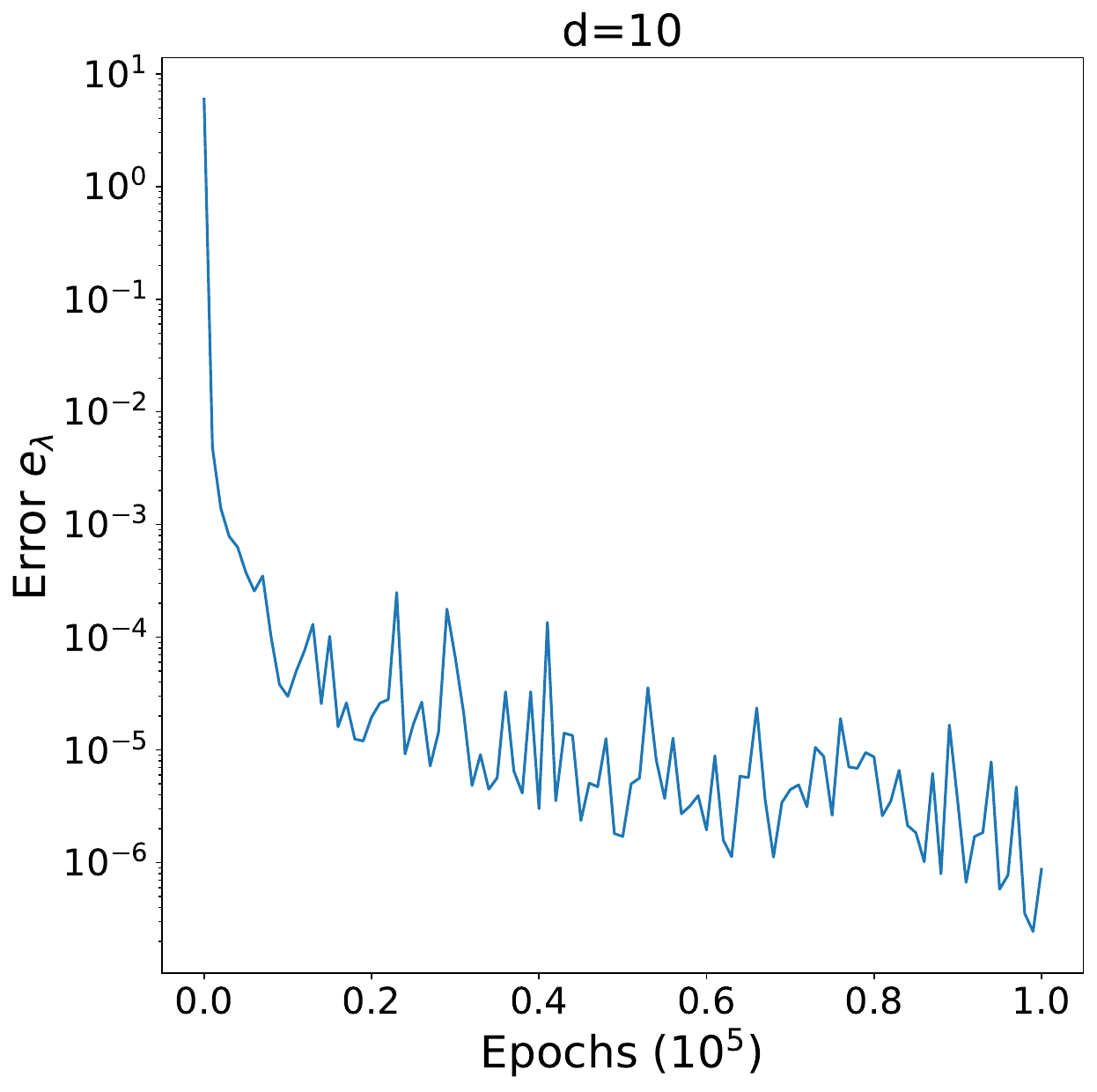}
\includegraphics[width=4cm,height=4cm]{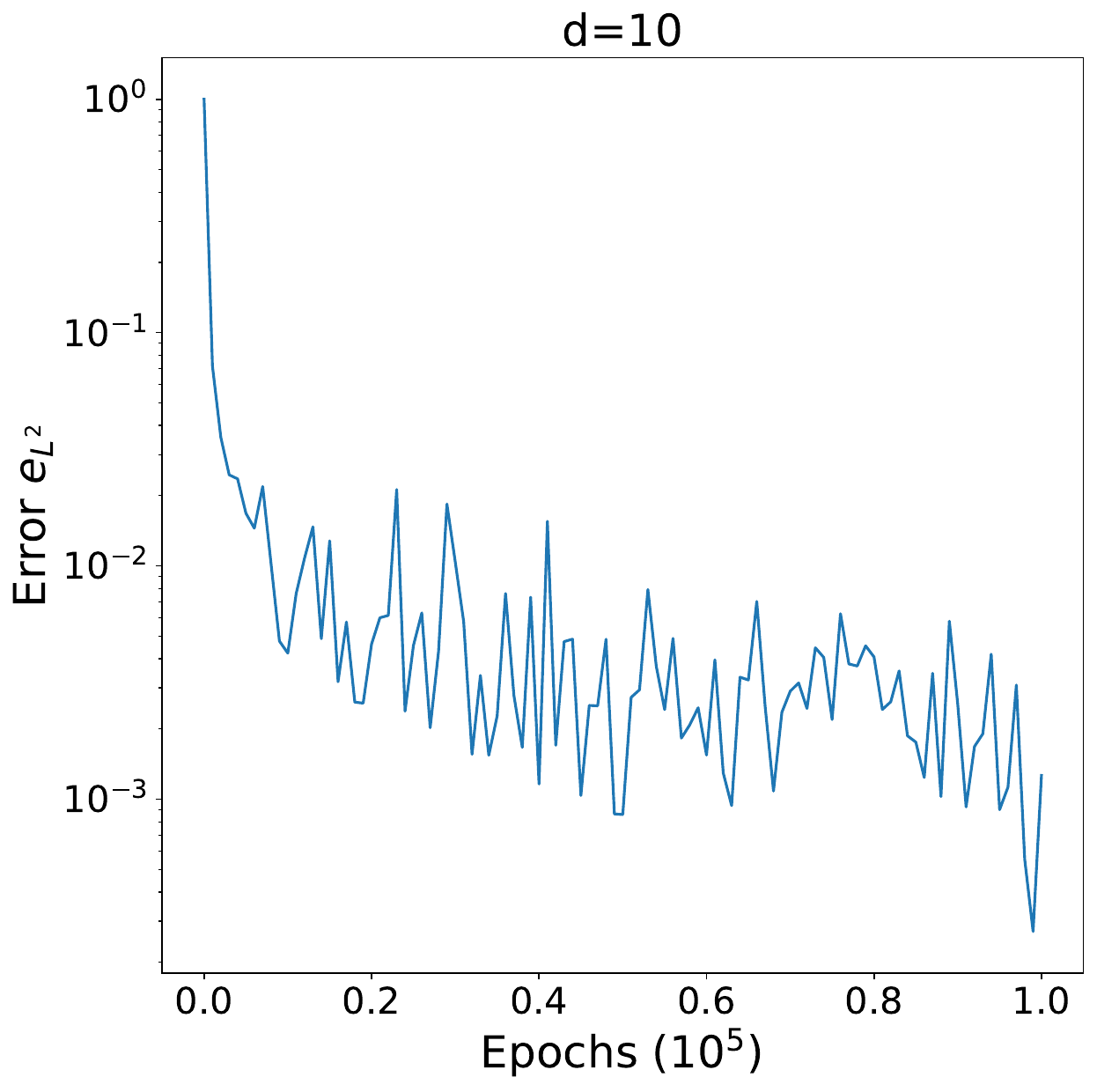}
\includegraphics[width=4cm,height=4cm]{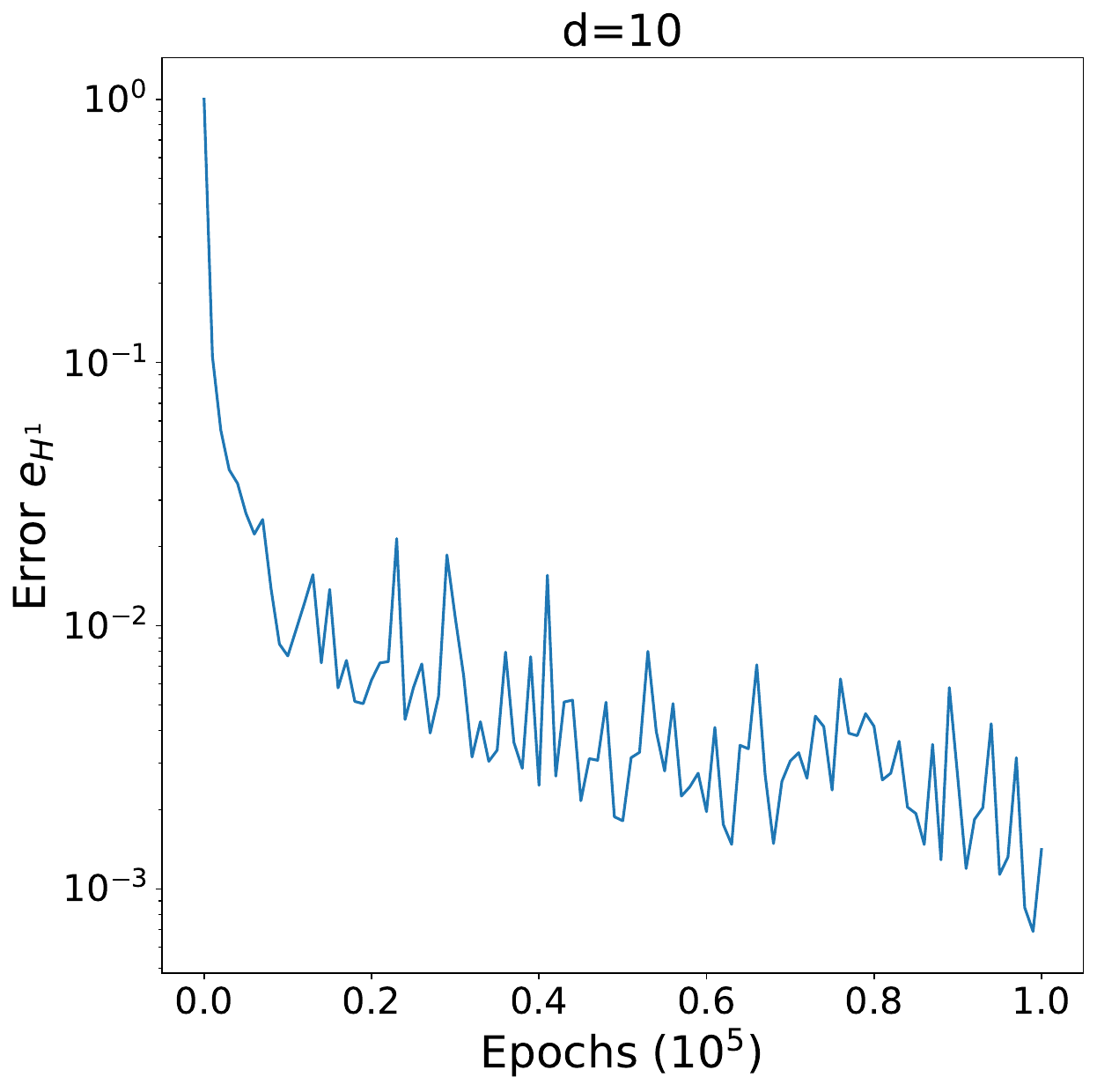}\\
\includegraphics[width=4cm,height=4cm]{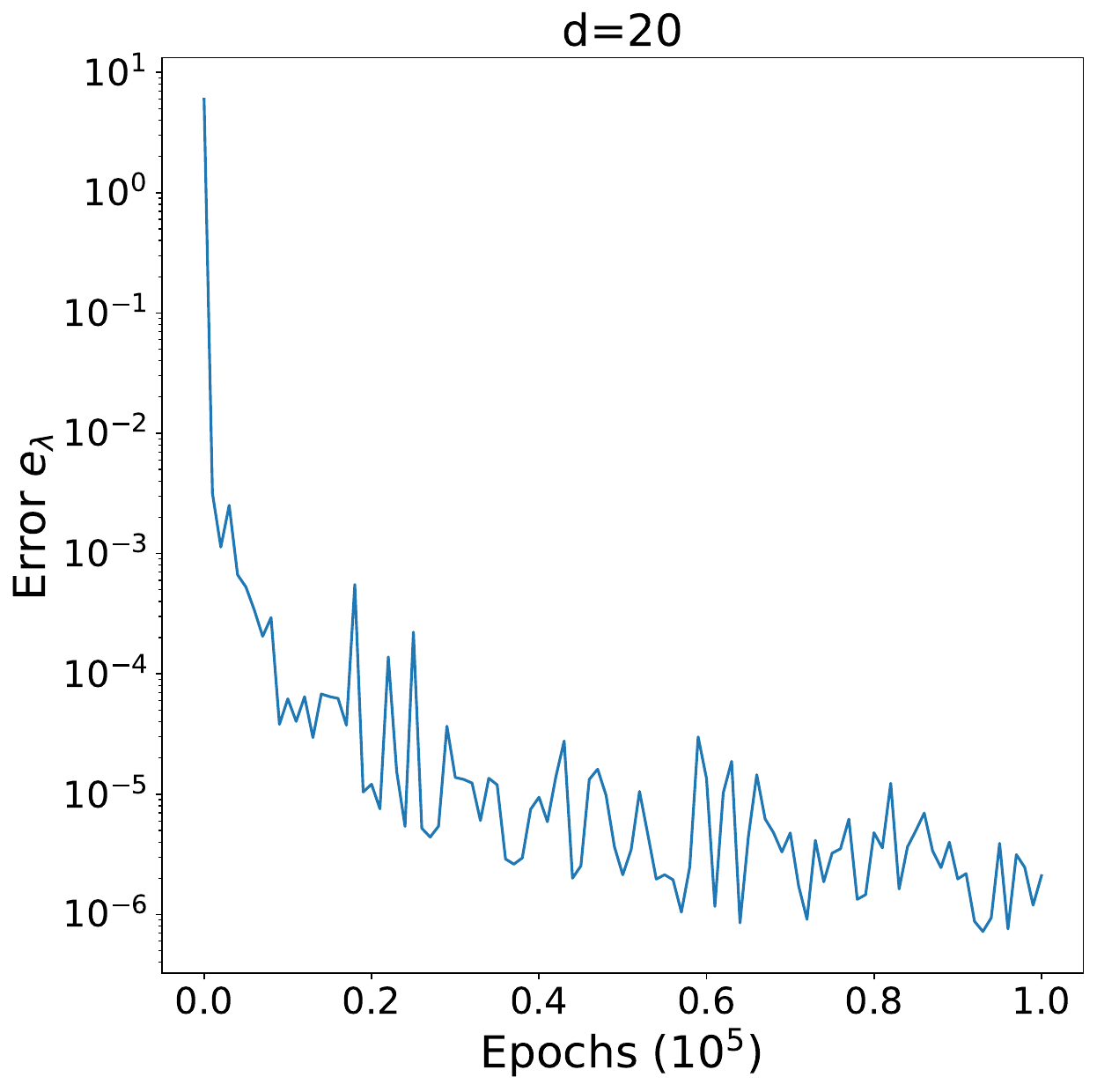}
\includegraphics[width=4cm,height=4cm]{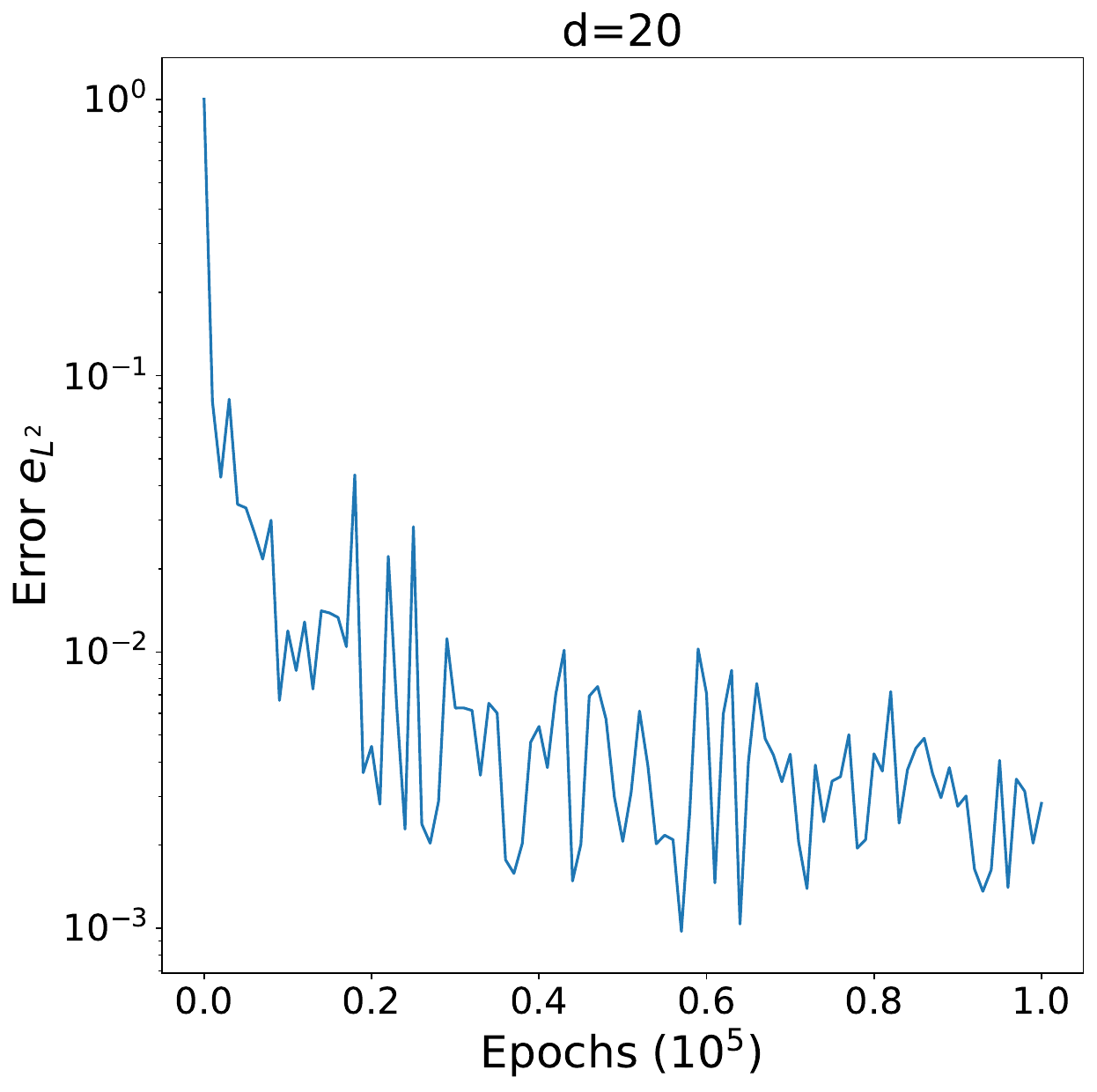}
\includegraphics[width=4cm,height=4cm]{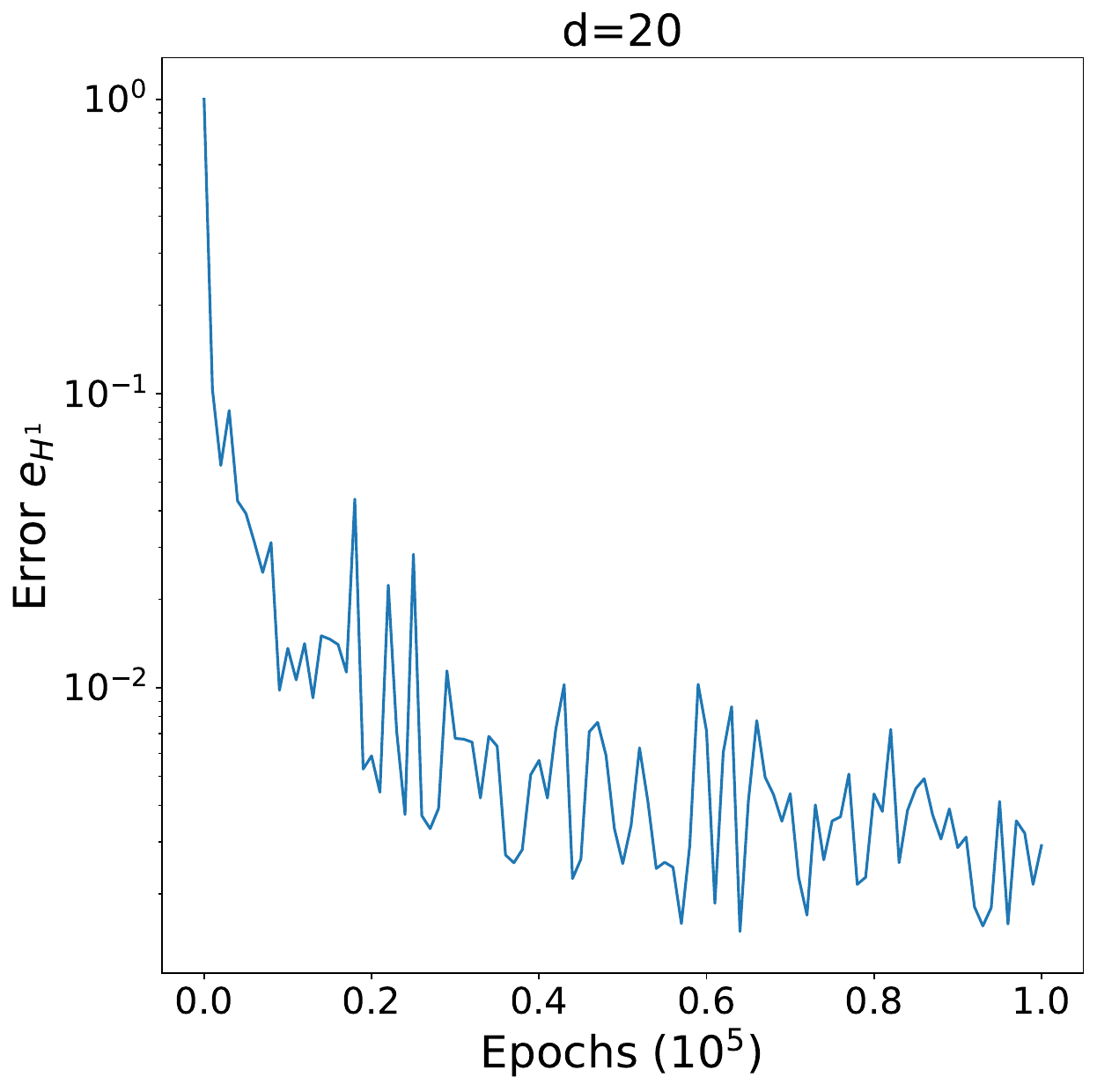}\\
\caption{Relative errors during the training process for the harmonic oscillator problem: for $d=5,10,20$.
The left column shows the relative errors of eigenvalue, the middle column shows the relative $L^2$ errors
and the right column shows the relative $H^1$ errors of eigenfunction approximations.}\label{fig_harmonic_high}
\end{figure}

\begin{table}[!htb]
\caption{Errors of the harmonic oscillator problem for $d=5,10,20$.}\label{table_harmonic_high}
\begin{center}
\begin{tabular}{ccccc}
\hline
$d$&   $e_{\lambda}$&   $e_{L^2}$&   $e_{H^1}$\\
\hline
5&   4.241e-07&   3.626e-04&   8.431e-04\\
10&   2.446e-07&   2.709e-04&   6.889e-04\\
20&   7.225e-07&   1.361e-03&   1.555e-03\\
\hline
\end{tabular}
\end{center}
\end{table}

\begin{figure}[htb!]
\centering
\includegraphics[width=4cm,height=4cm]{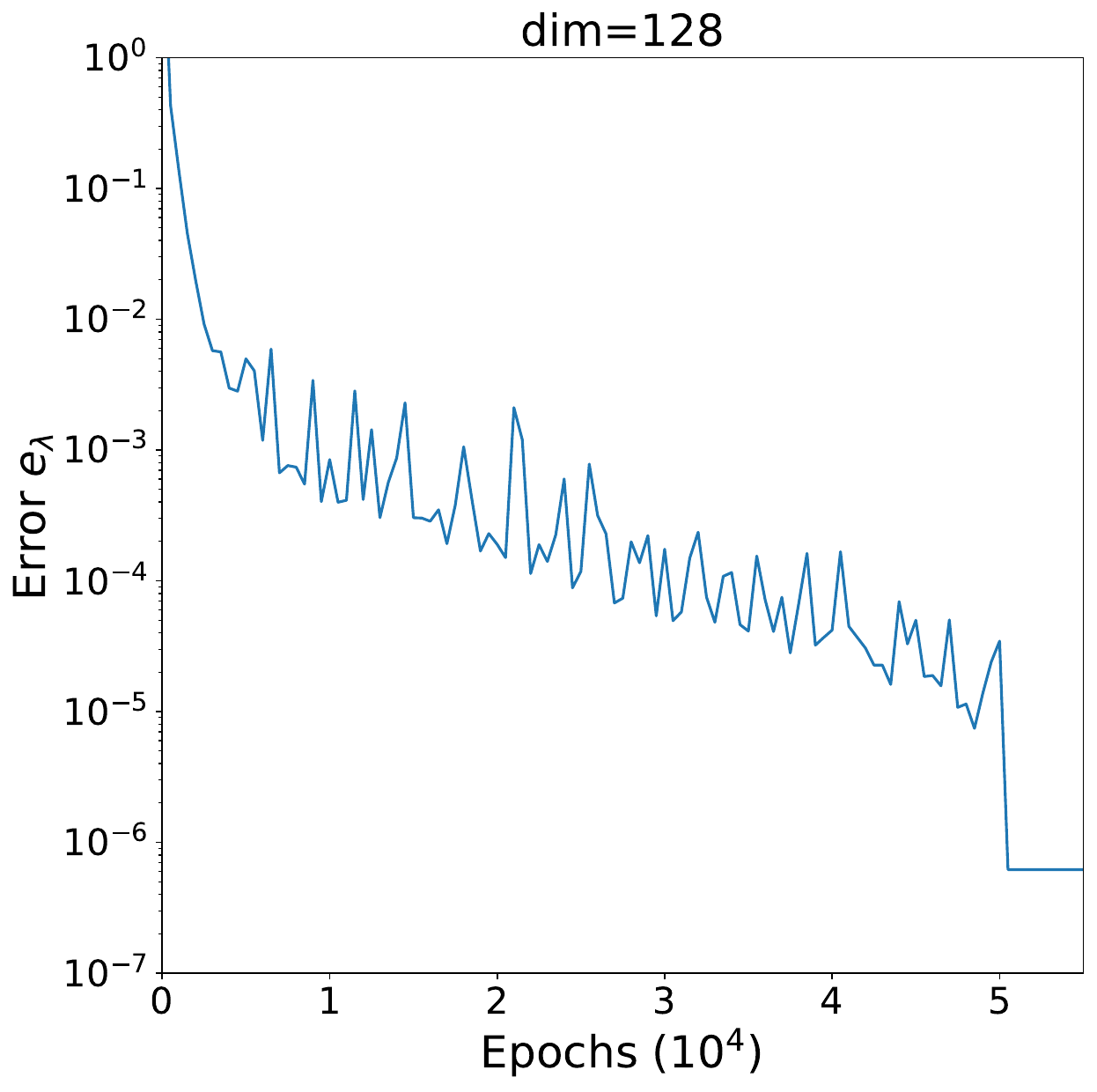}
\includegraphics[width=4cm,height=4cm]{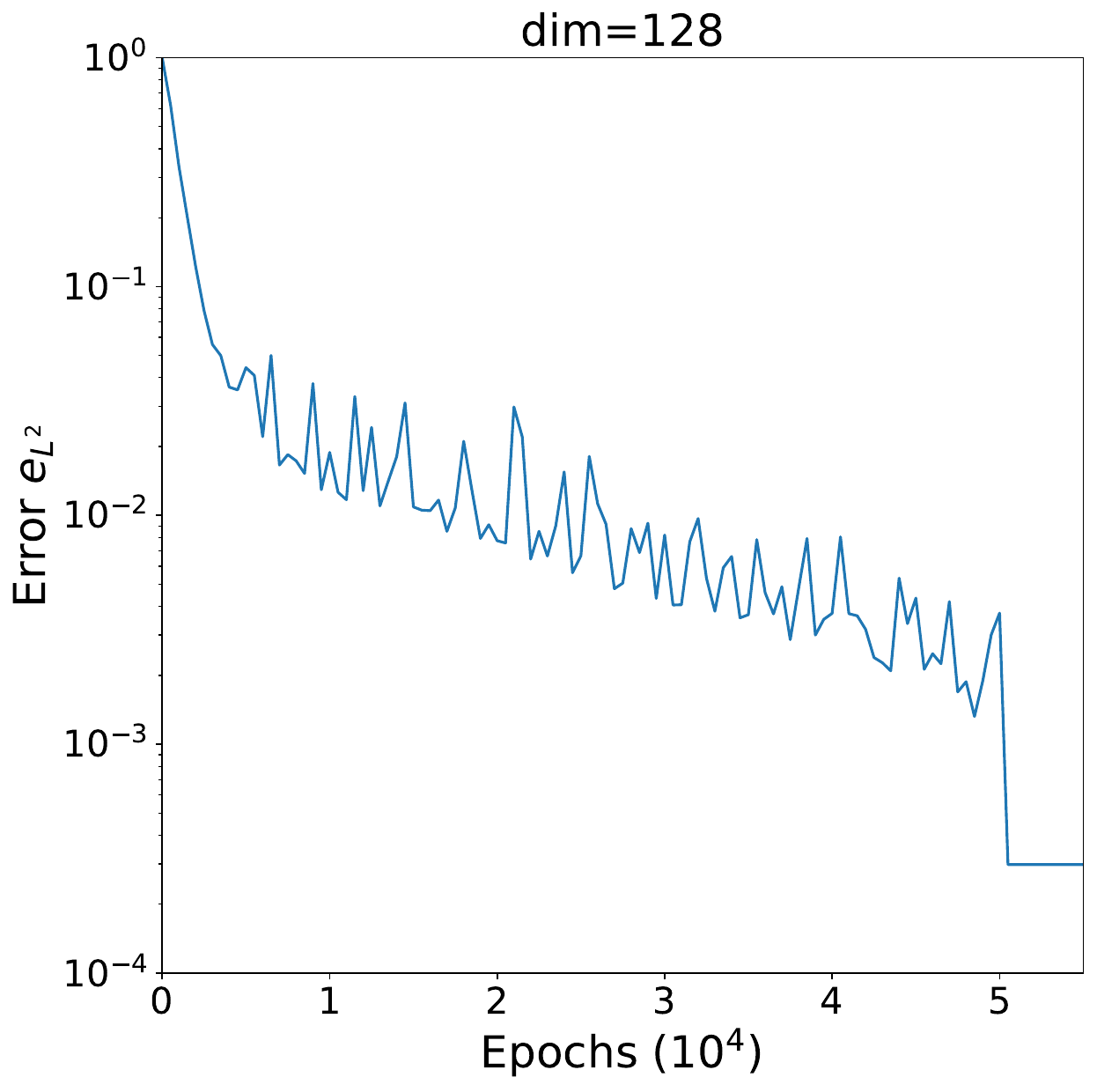}
\includegraphics[width=4cm,height=4cm]{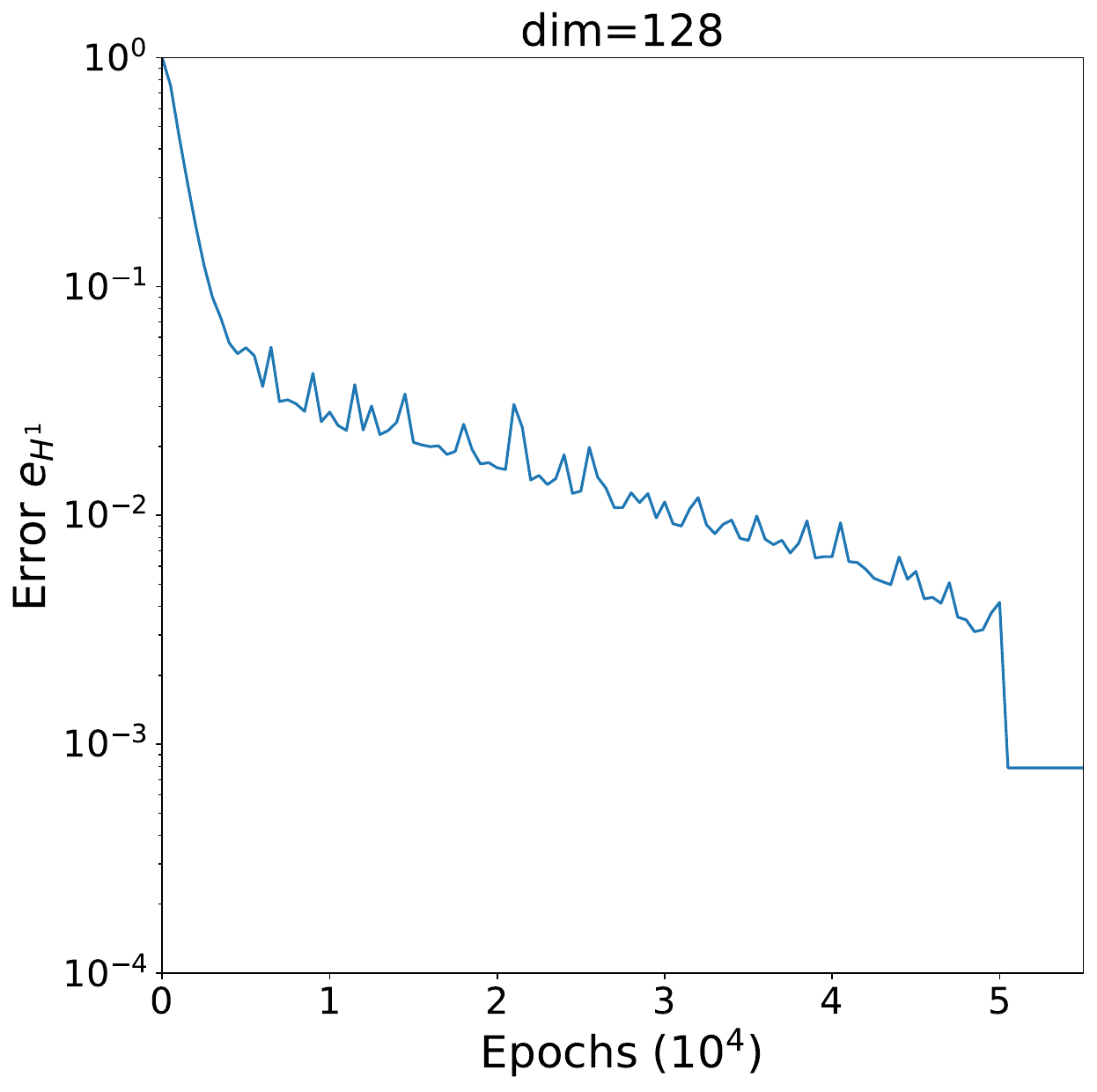}\\
\includegraphics[width=4cm,height=4cm]{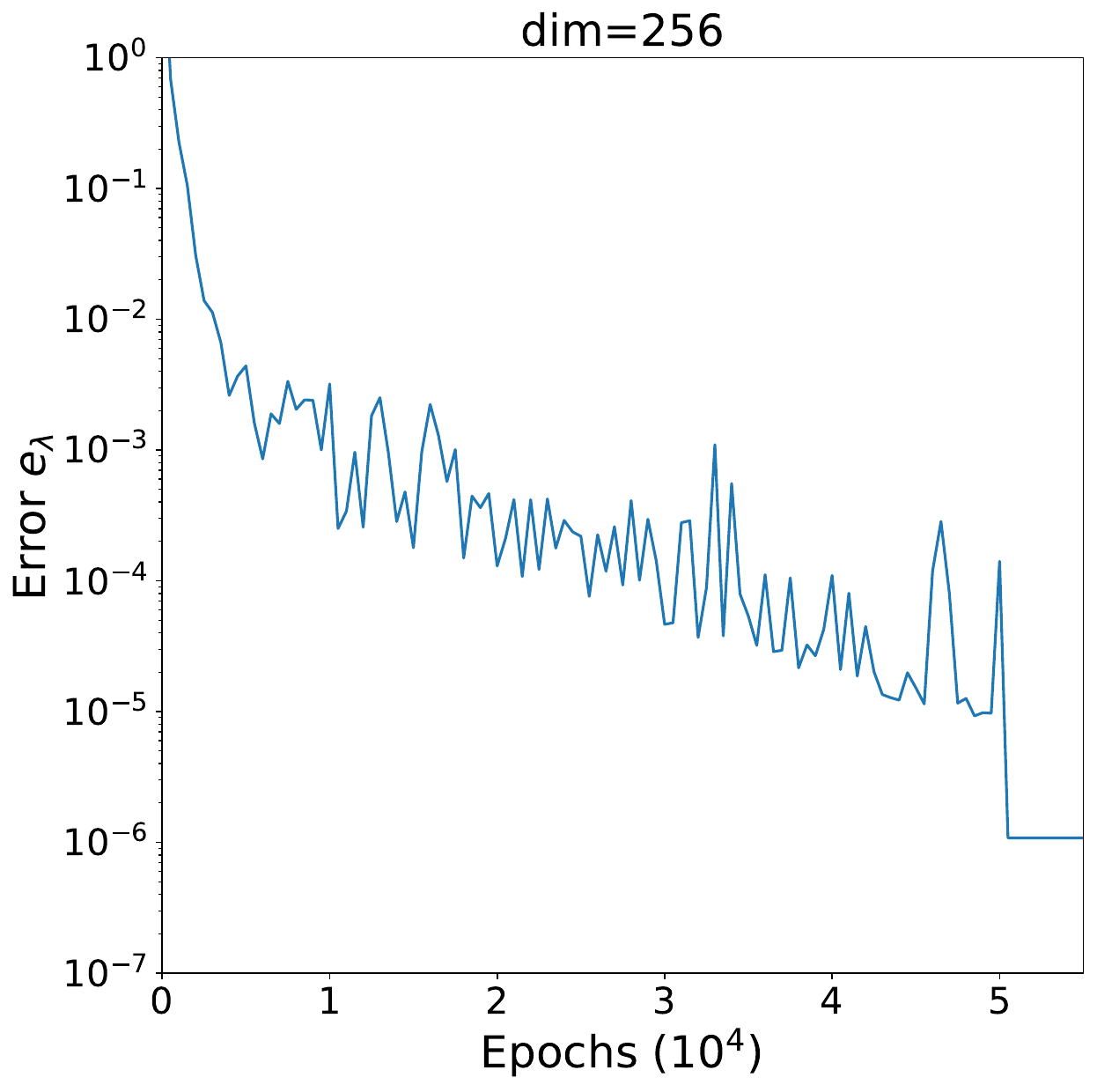}
\includegraphics[width=4cm,height=4cm]{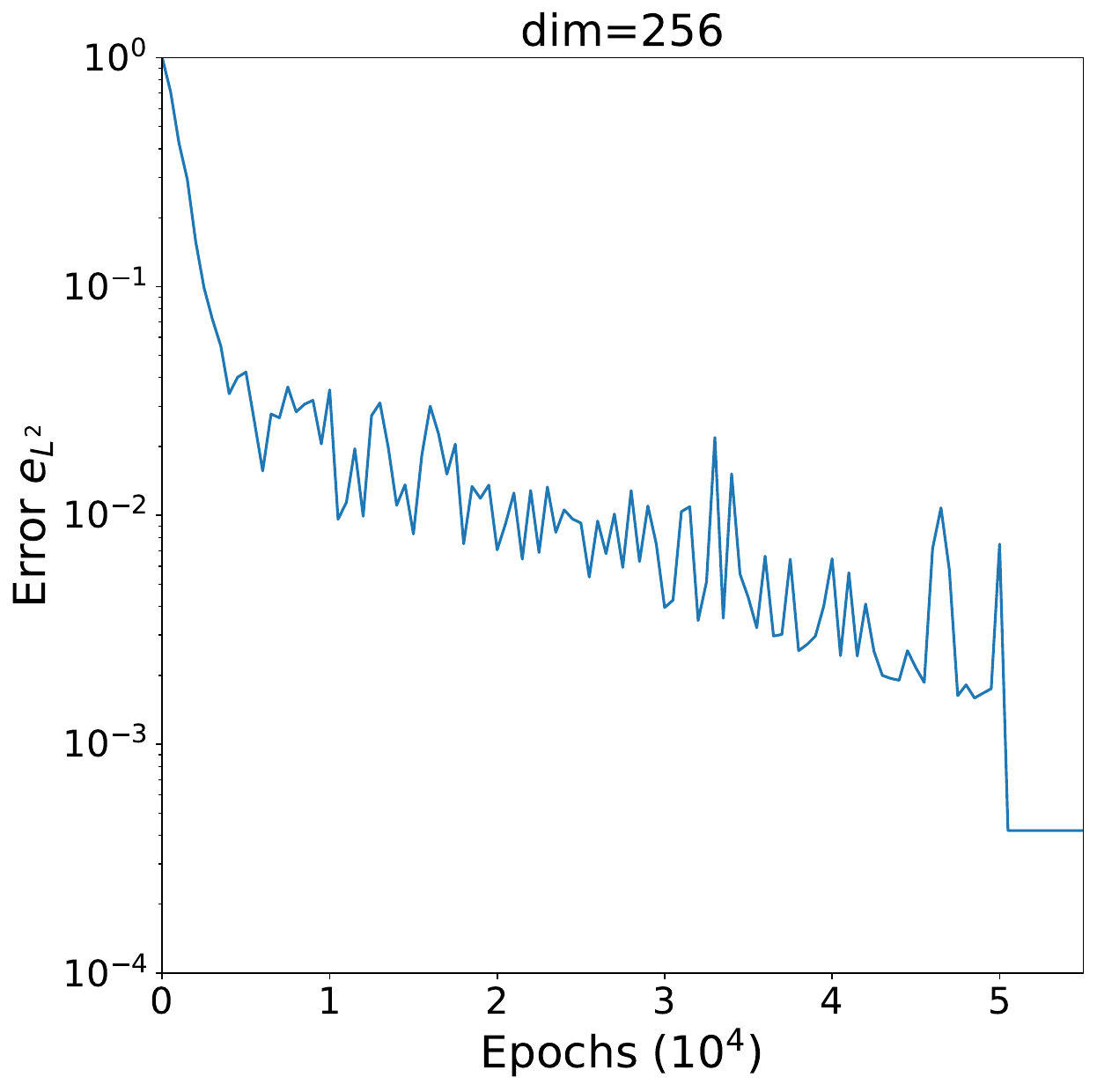}
\includegraphics[width=4cm,height=4cm]{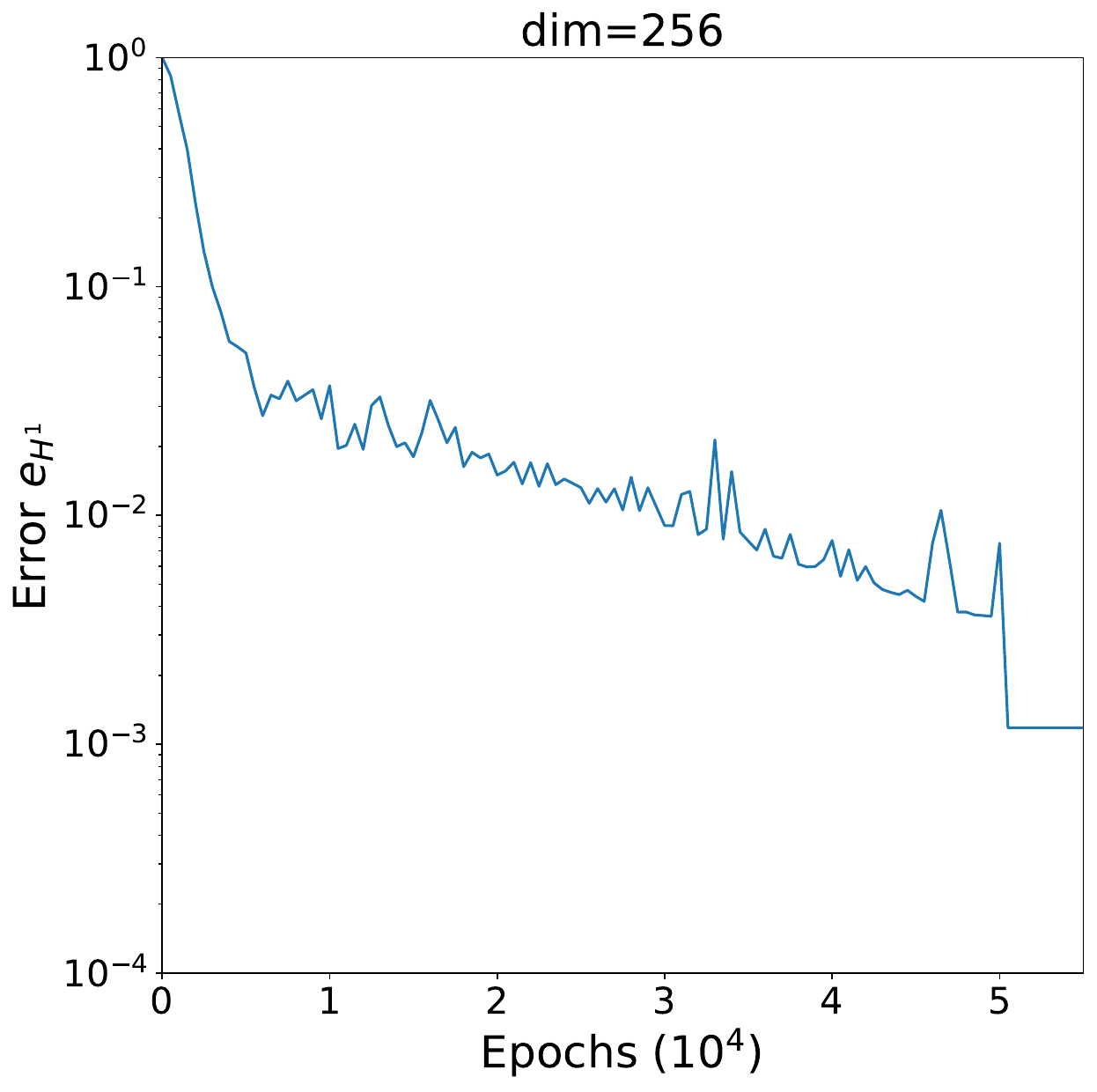}\\
\includegraphics[width=4cm,height=4cm]{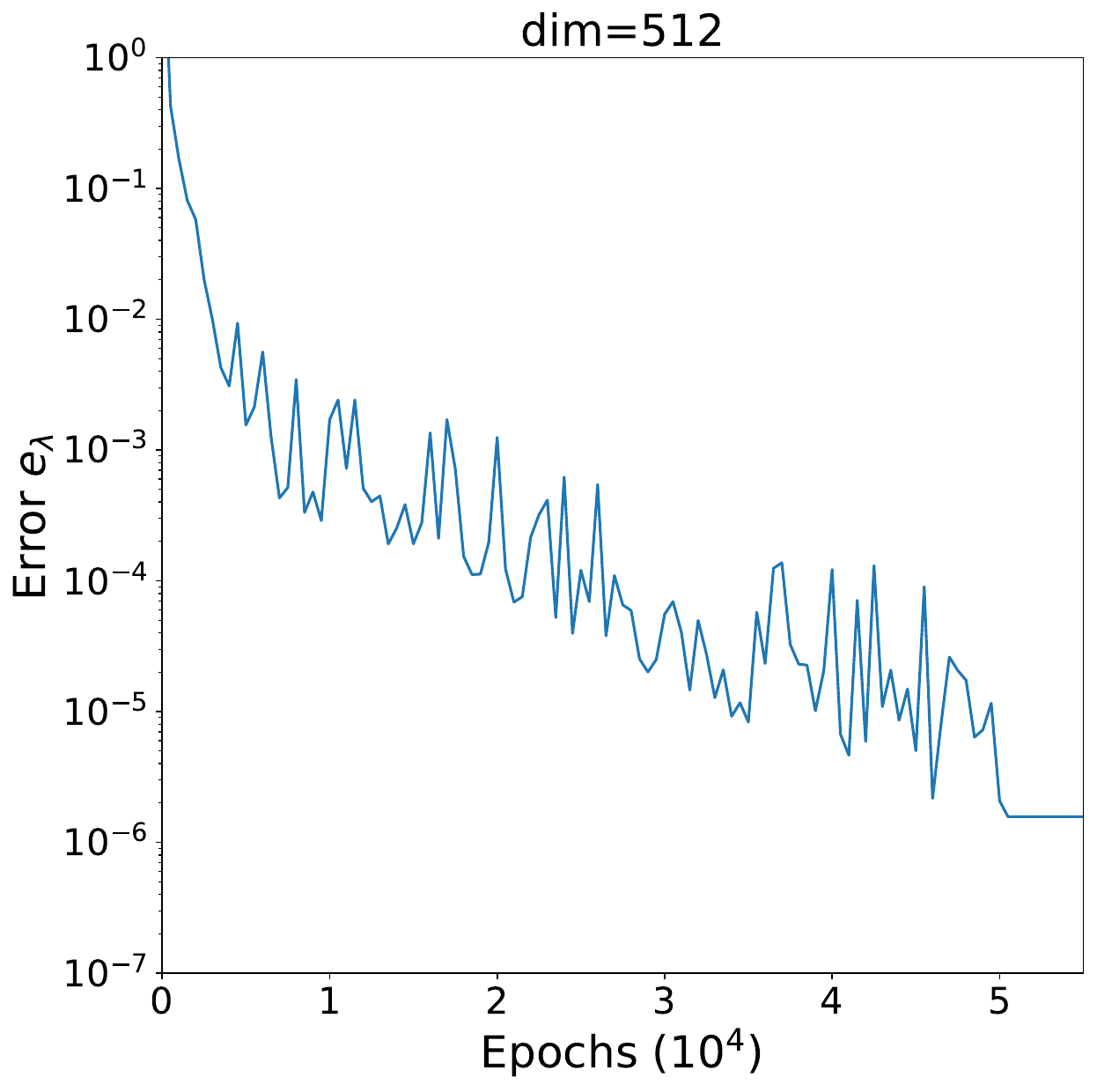}
\includegraphics[width=4cm,height=4cm]{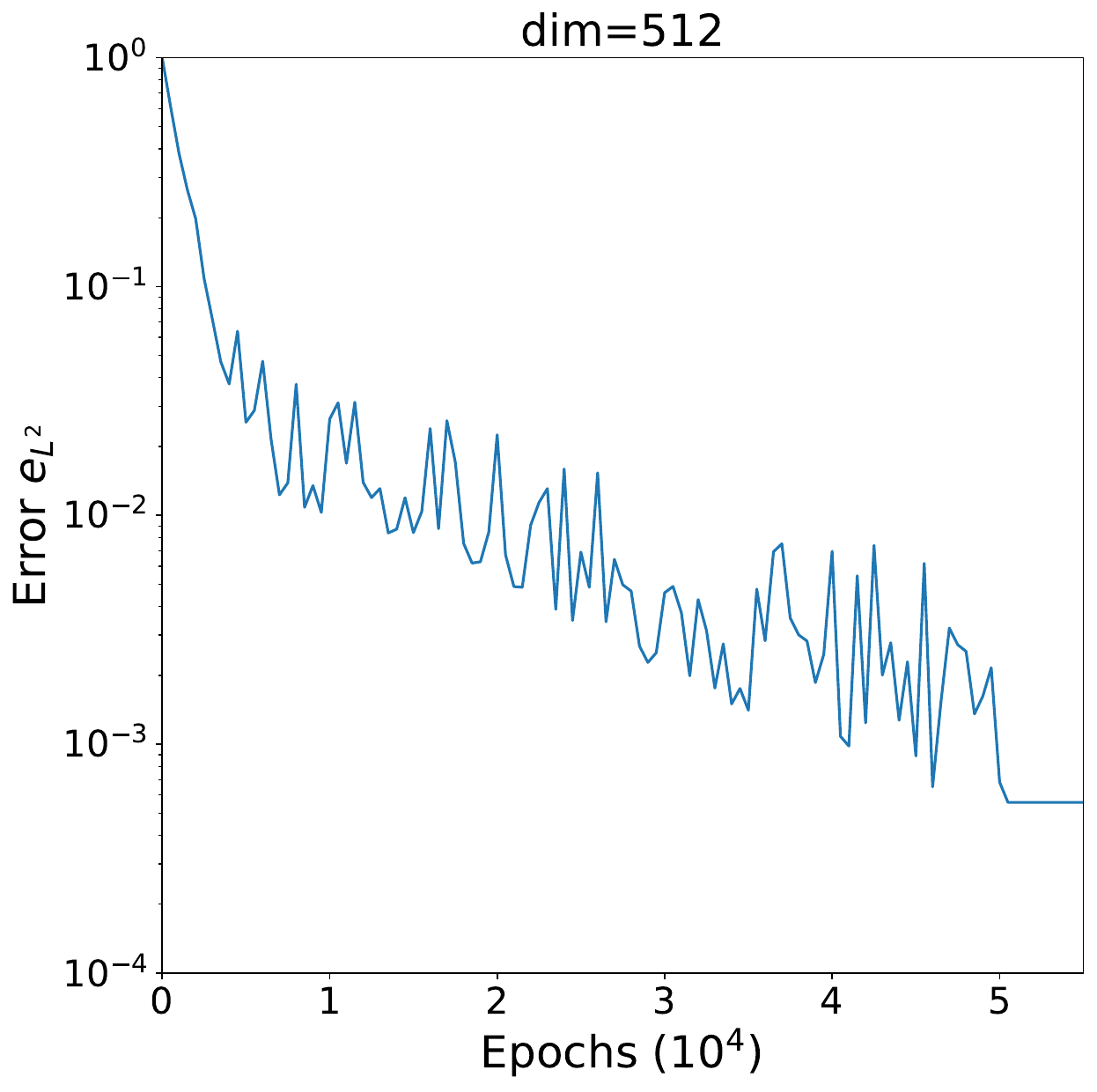}
\includegraphics[width=4cm,height=4cm]{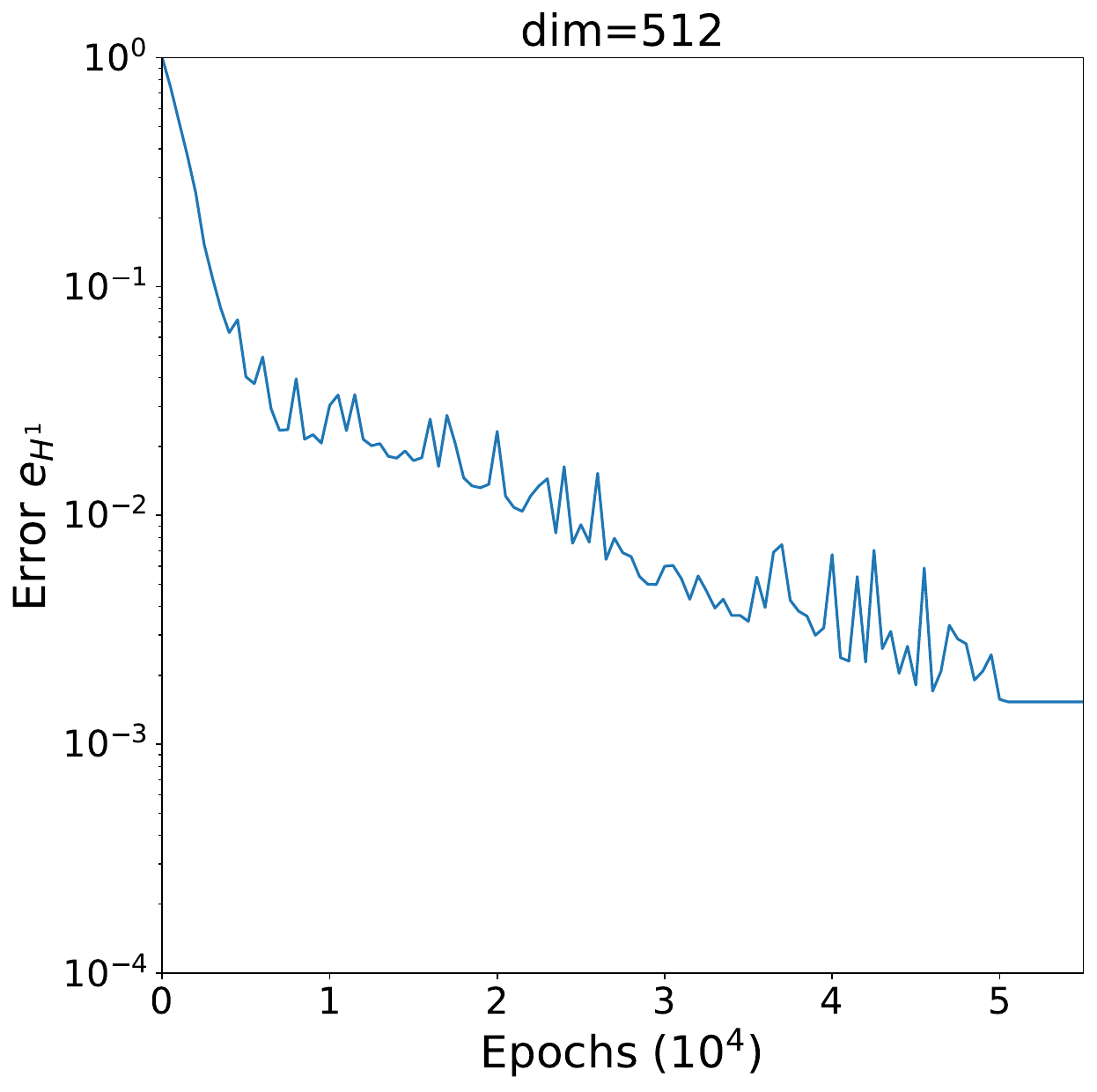}\\
\caption{\revise{Relative errors during the training process for the harmonic oscillator problem: for $d=128,256,512$.
The left column shows the relative errors of eigenvalue approximations, the middle column shows the relative $L^2$
errors and the right column shows the relative $H^1$ errors of eigenfunction approximations.}}\label{fig_harmonic_ultra_high}
\end{figure}

\begin{table}[!htb]
\caption{\revise{Errors of the harmonic oscillator problem for $d=128,256,512$.}}\label{table_harmonic_ultra_high}
\begin{center}
\begin{tabular}{ccccc}
\hline
$d$&   $e_{\lambda}$&   $e_{L^2}$&   $e_{H^1}$\\
\hline
128&   6.188e-07&   2.979e-04&   7.884e-04\\
256&   1.080e-06&   4.195e-04&   1.181e-03\\
512&   1.570e-06&   5.570e-04&   1.532e-03\\
\hline
\end{tabular}
\end{center}
\end{table}

\subsection{Eigenvalue problem with coupled harmonic oscillator}
In the third example, the potential function is defined as (\ref{coupled harmonic}).
Similar to the derivation in \cite{CHO}, the exact smallest eigenvalue is
\begin{eqnarray*}
\lambda_0=\sum_{i=1}^d\sqrt{1-\cos\Big(\frac{i\pi}{d+1}\Big)},
\end{eqnarray*}
and the exact eigenfunction has Gaussian form.
In this example, we test the case of $d=4$. Then the exact eigenfunction is
\begin{eqnarray*}\label{exact_coupled}
&&u(x_1,x_2,x_3,x_4)\nonumber\\
&=&\exp\Big[-\frac{1}{2}(\omega_1a^2+\omega_2b^2+\omega_3a^2+\omega_4b^2)(x_1^2+x_3^2)
-\frac{1}{2}(\omega_1b^2+\omega_2a^2+\omega_3b^2+\omega_4a^2)(x_2^2+x_4^2)\nonumber\\
&&-ab(-\omega_1-\omega_2+\omega_3+\omega_4)(x_1x_2+x_3x_4)
-ab(\omega_1-\omega_2+\omega_3-\omega_4)(x_1x_4+x_2x_3)\nonumber\\
&&-(-\omega_1a^2+\omega_2b^2+\omega_3a^2-\omega_4b^2)x_1x_3
-(-\omega_1b^2+\omega_2a^2+\omega_3b^2-\omega_4a^2)x_2x_4 \Big],
\end{eqnarray*}
where $a=\sqrt{5-\sqrt{5}}/(2\sqrt{5})$, $b=\sqrt{5+\sqrt{5}}/(2\sqrt{5})$, $\omega_1=\sqrt{5+\sqrt{5}}/2$, $\omega_2=\sqrt{3+\sqrt{5}}/2$, $\omega_3=\sqrt{5-\sqrt{5}}/2$, $\omega_4=\sqrt{3-\sqrt{5}}/2$.
To demonstrate the efficiency of the proposed method, we take hyperparameter $p$ from 1 to 30.
\revise{Each dimension of the TNN is an FNN with two hidden layers and  each hidden layer has 50 hidden neurons.
We train the TNN to investigate the dependence of
the convergence behavior on the hyperparameter $p$.
All the cases are trained by the Adam optimizer with
the same learning rate of 0.001 and epochs of 500000.}
The computational domain is truncated to $[-5,5]^d$. And we decompose the interval $[-5,5]$
in each dimensional into 100 equal subintervals
and choose 16 Gauss points on each subinterval. The relative errors $e_\lambda$ during
the training process for different $p$ are shown in
Figure \ref{fig_coupled} and Figure \ref{fig_coupled_error_p} demonstrates how the final error
changes as $p$ increases. From Figures \ref{fig_coupled} and \ref{fig_coupled_error_p}, we can
find that the proposed method converges at an impressive accuracy.

\begin{figure}[htb!]
\centering
\includegraphics[width=2.75cm,height=2.5cm]{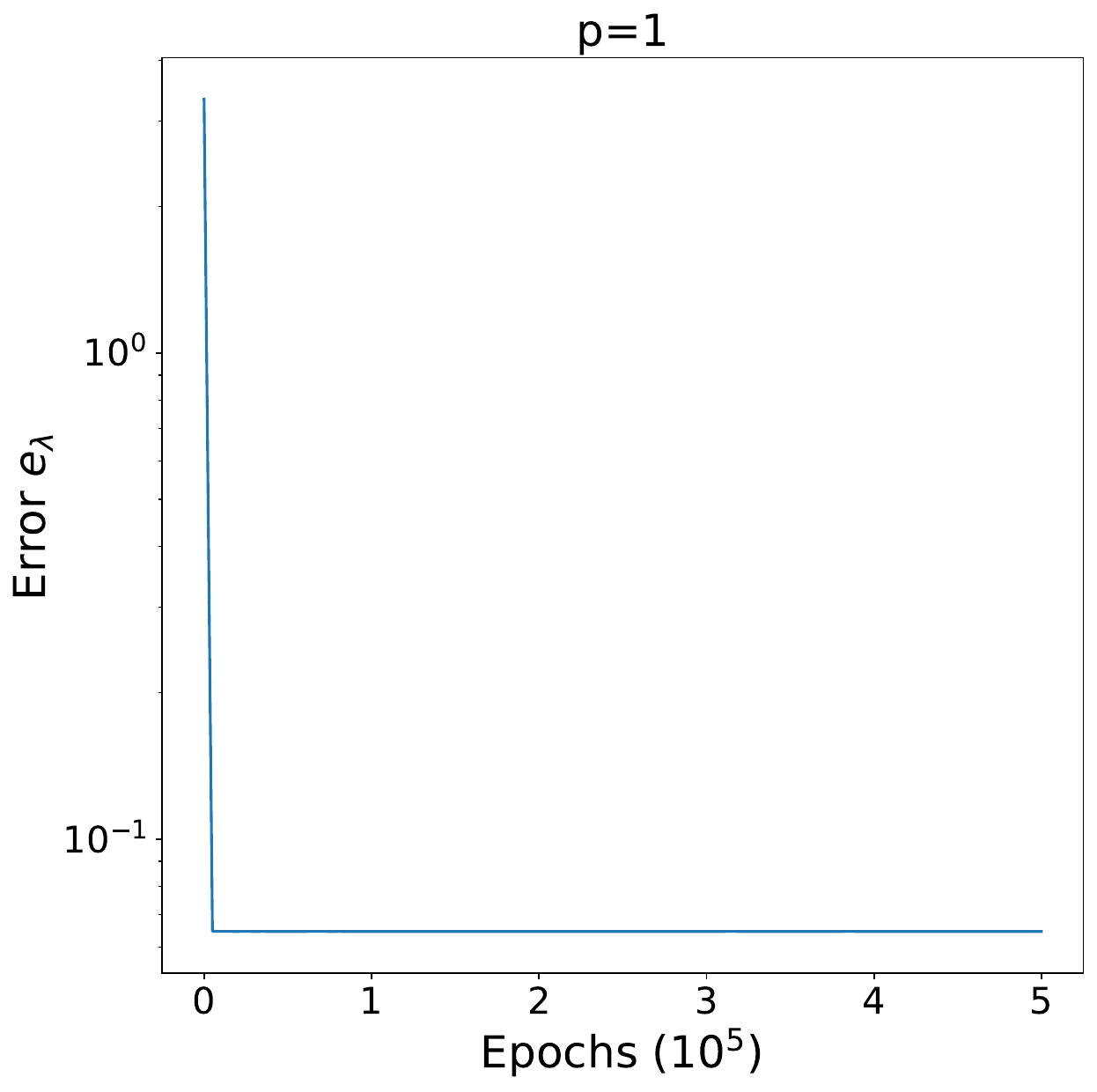}
\includegraphics[width=2.75cm,height=2.5cm]{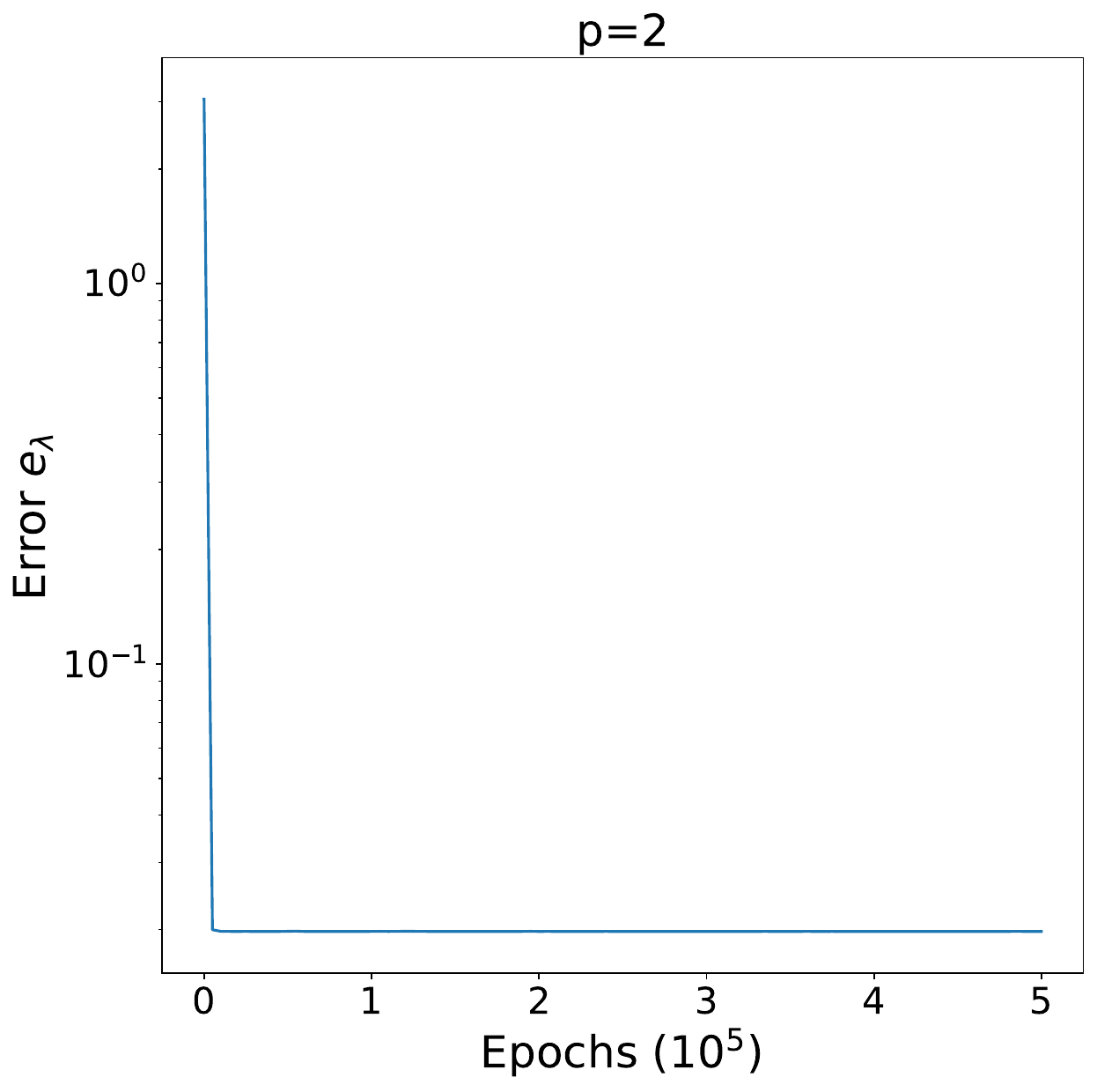}
\includegraphics[width=2.75cm,height=2.5cm]{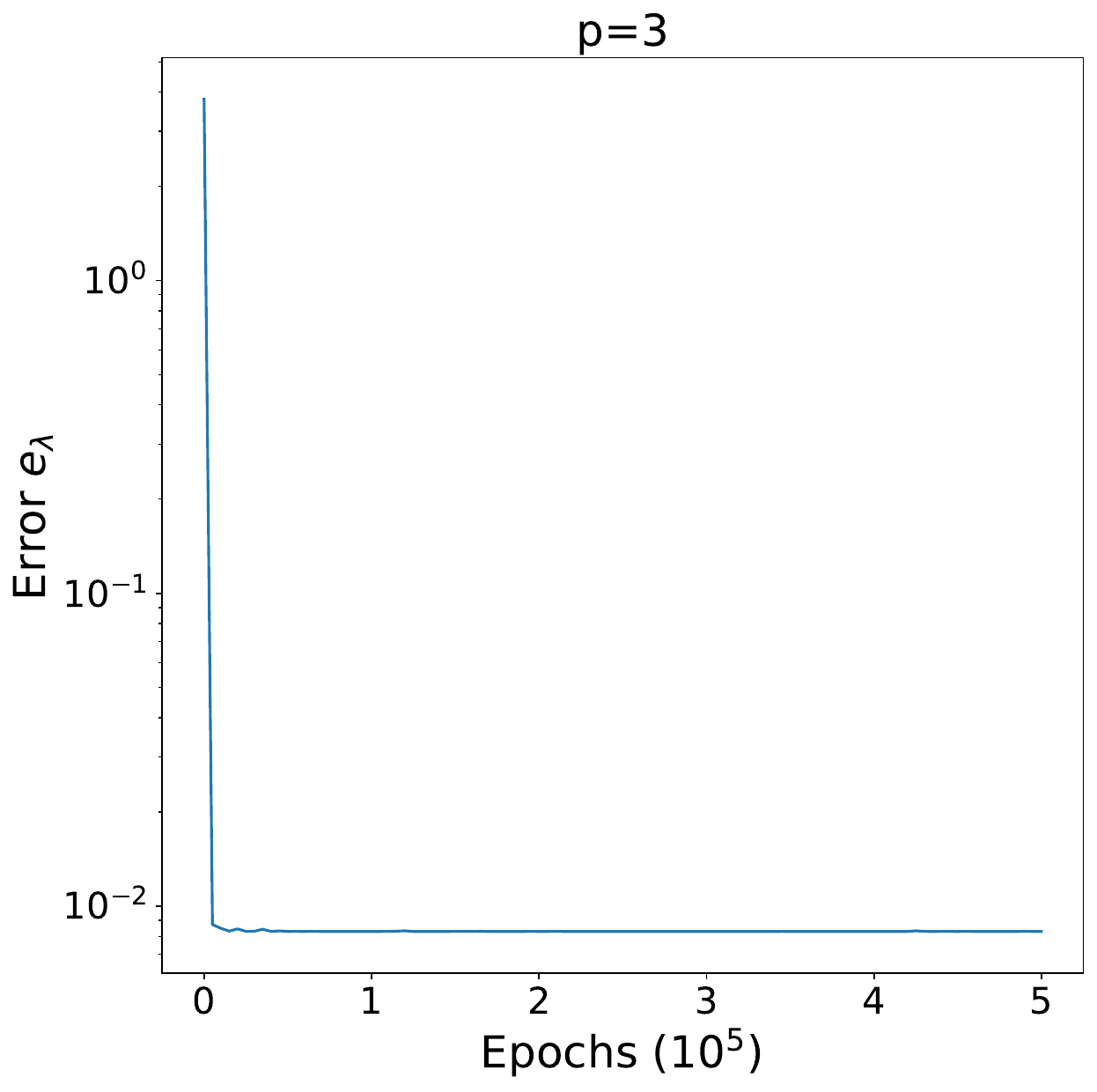}
\includegraphics[width=2.75cm,height=2.5cm]{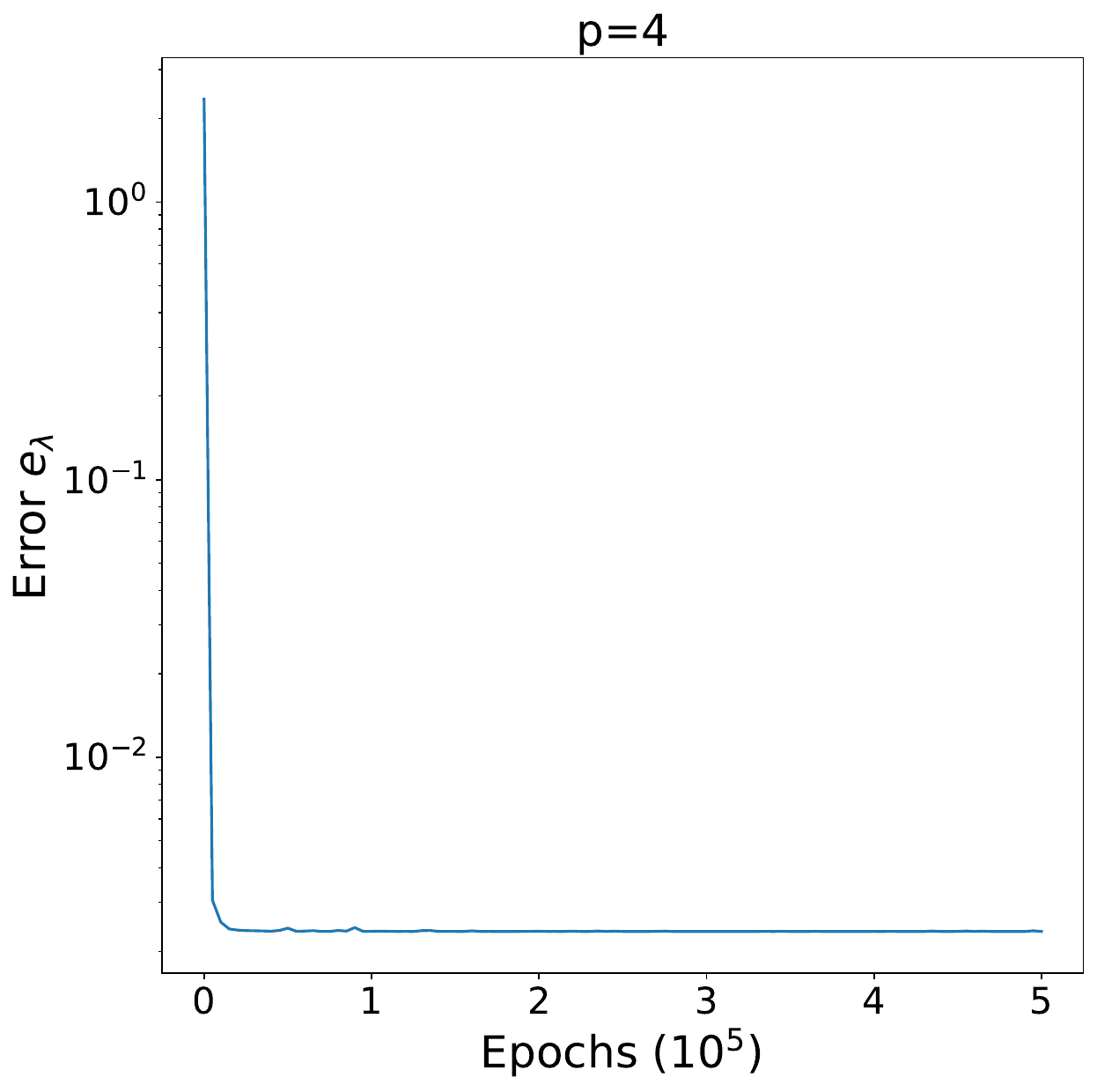}
\includegraphics[width=2.75cm,height=2.5cm]{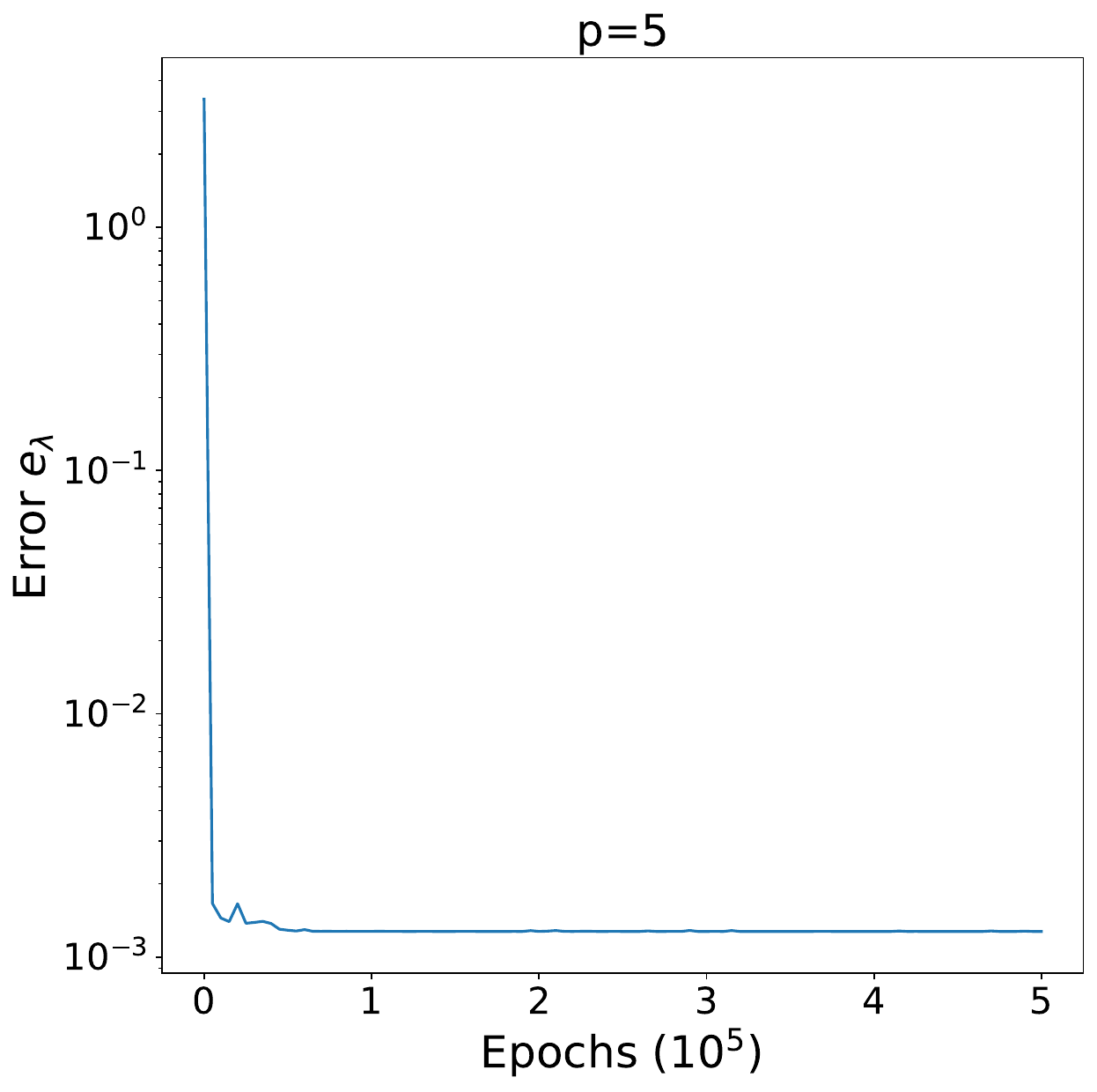}\\
\includegraphics[width=2.75cm,height=2.5cm]{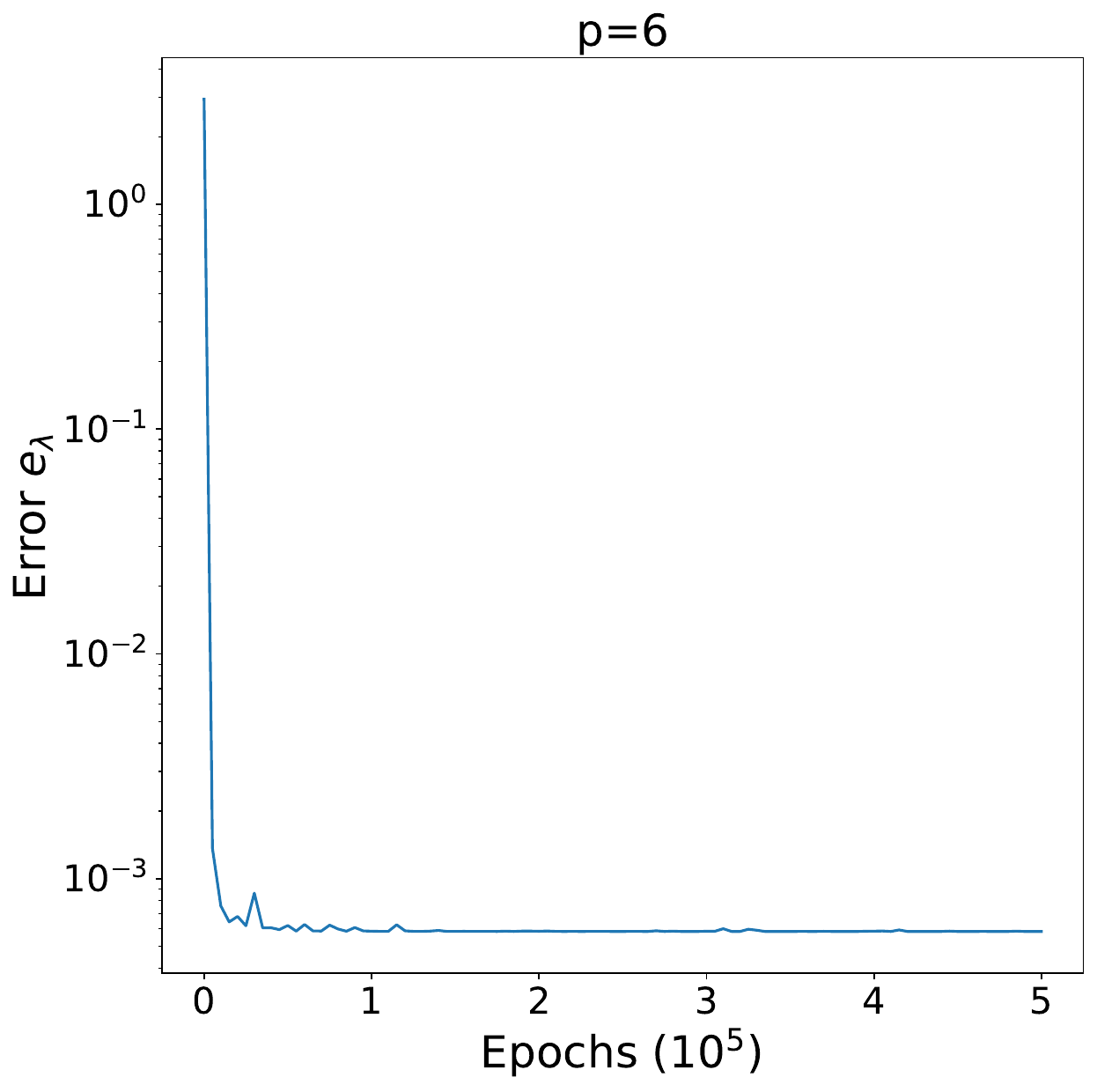}
\includegraphics[width=2.75cm,height=2.5cm]{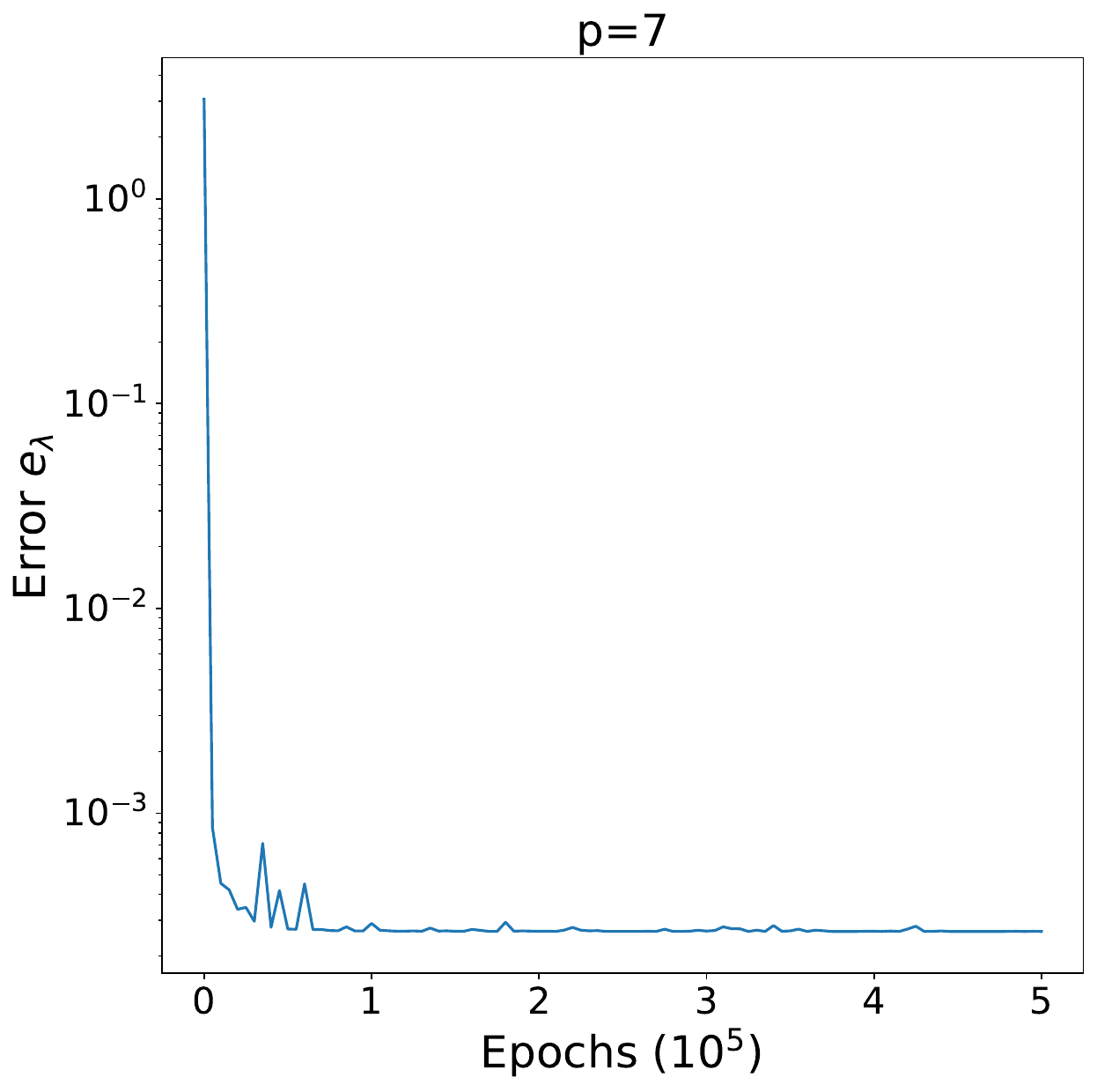}
\includegraphics[width=2.75cm,height=2.5cm]{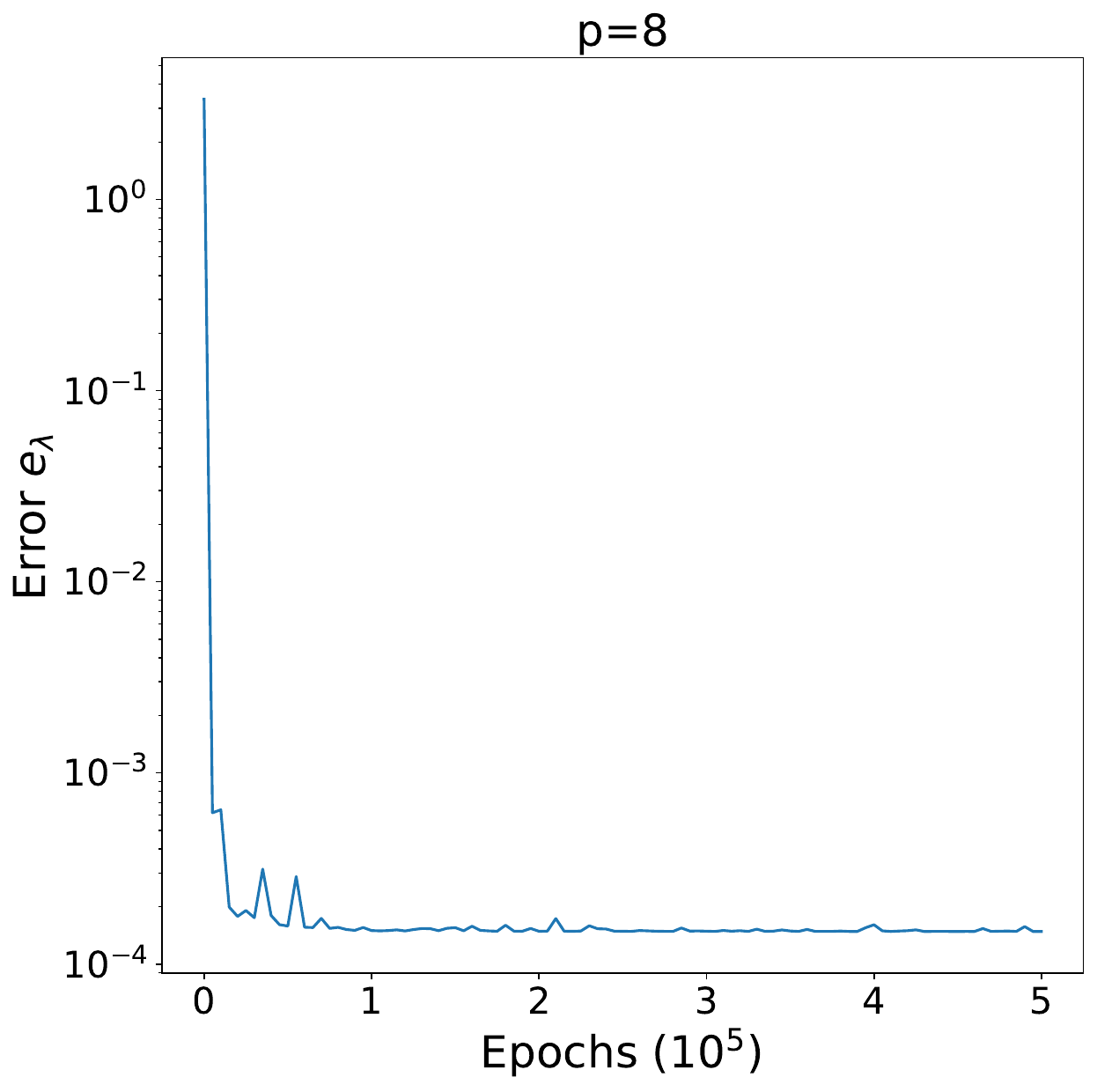}
\includegraphics[width=2.75cm,height=2.5cm]{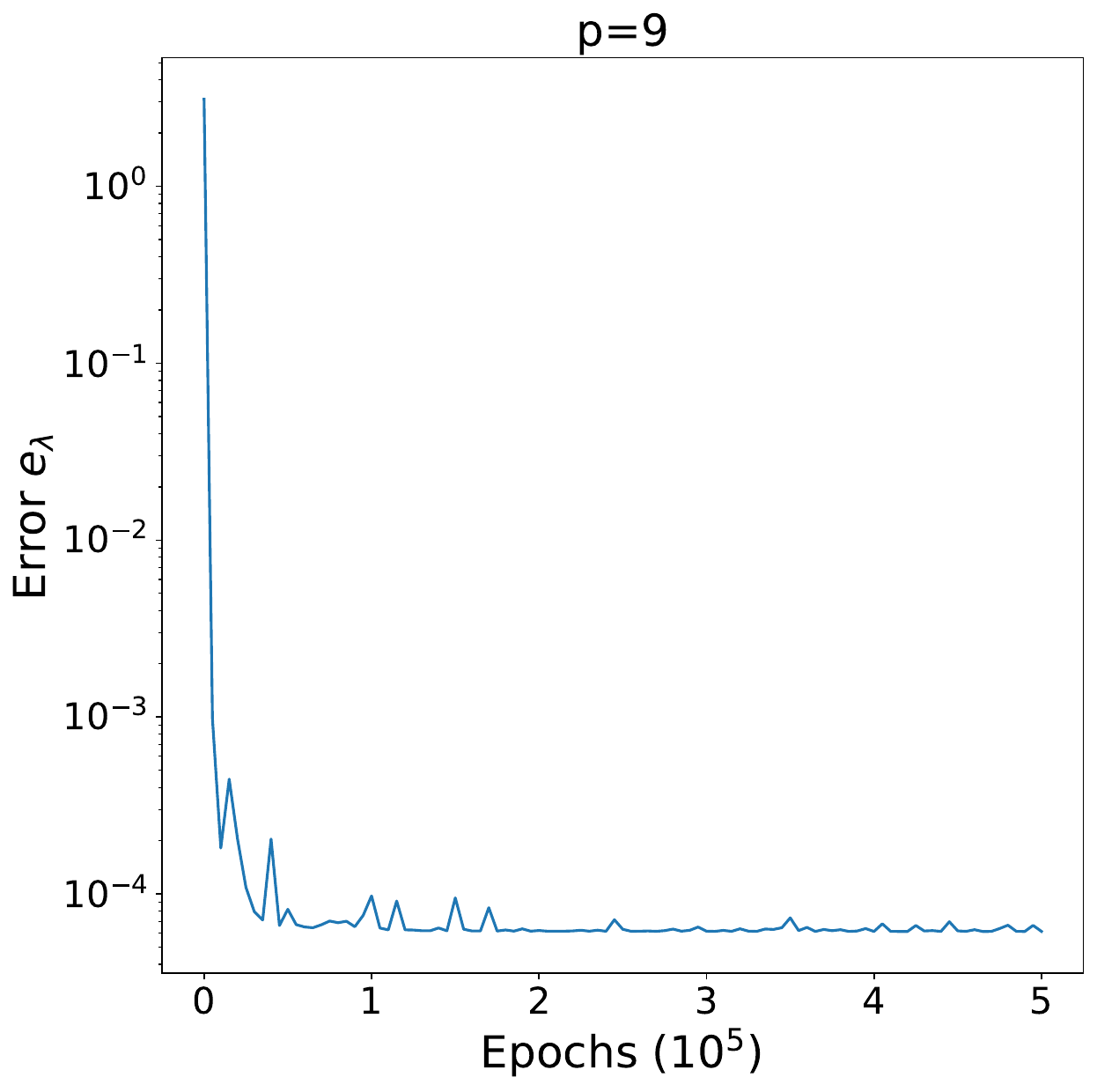}
\includegraphics[width=2.75cm,height=2.5cm]{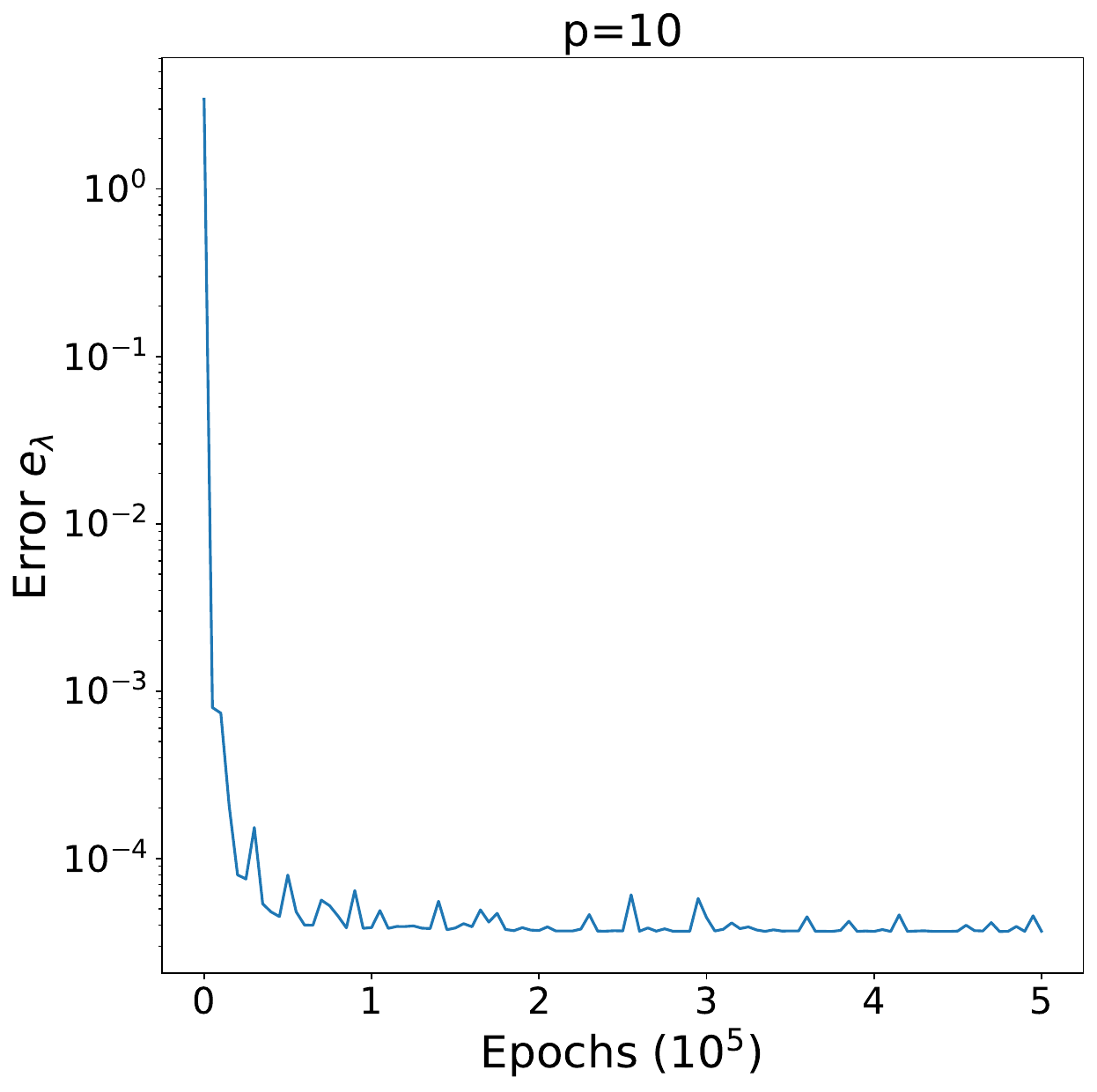}\\
\includegraphics[width=2.75cm,height=2.5cm]{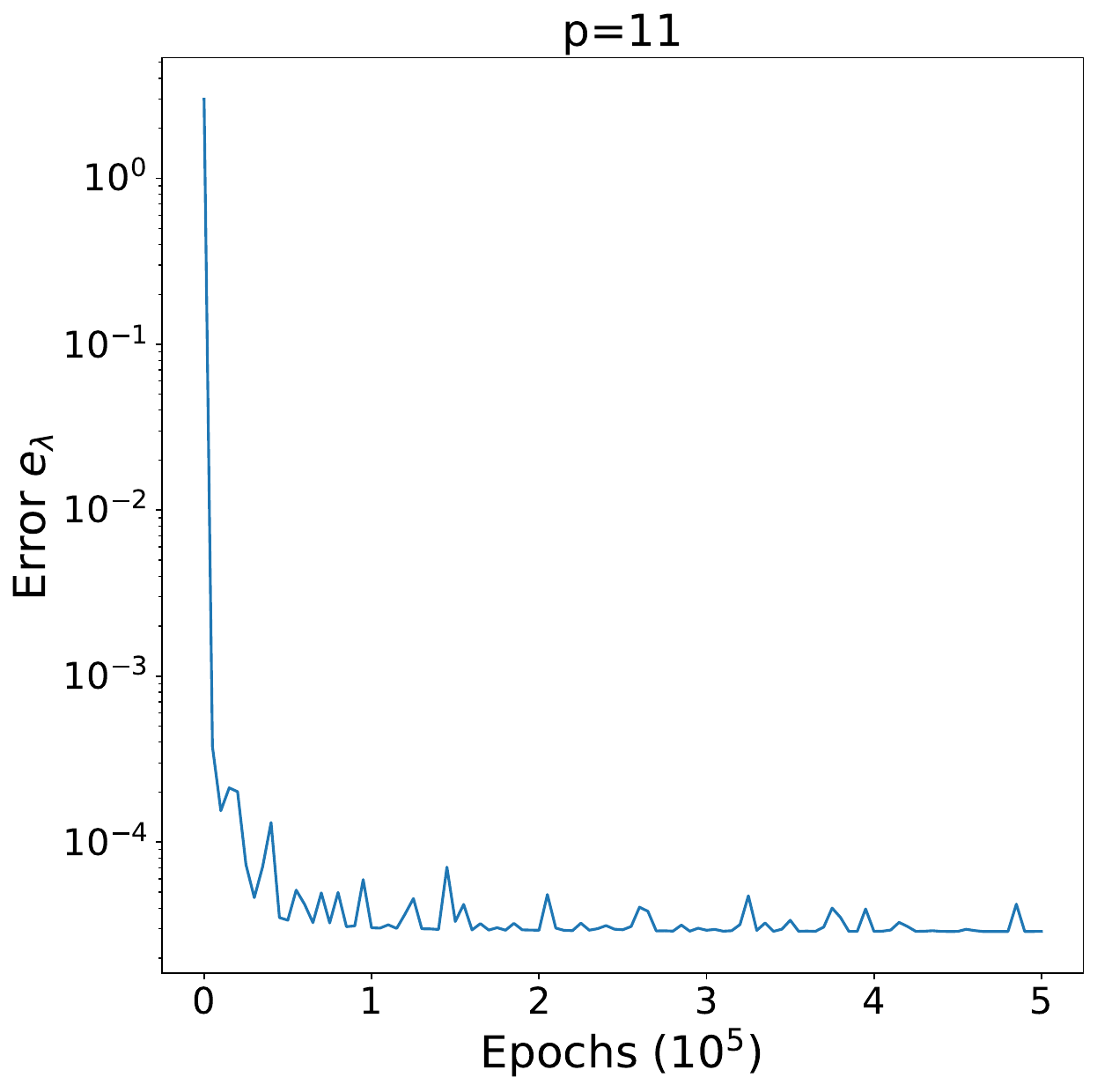}
\includegraphics[width=2.75cm,height=2.5cm]{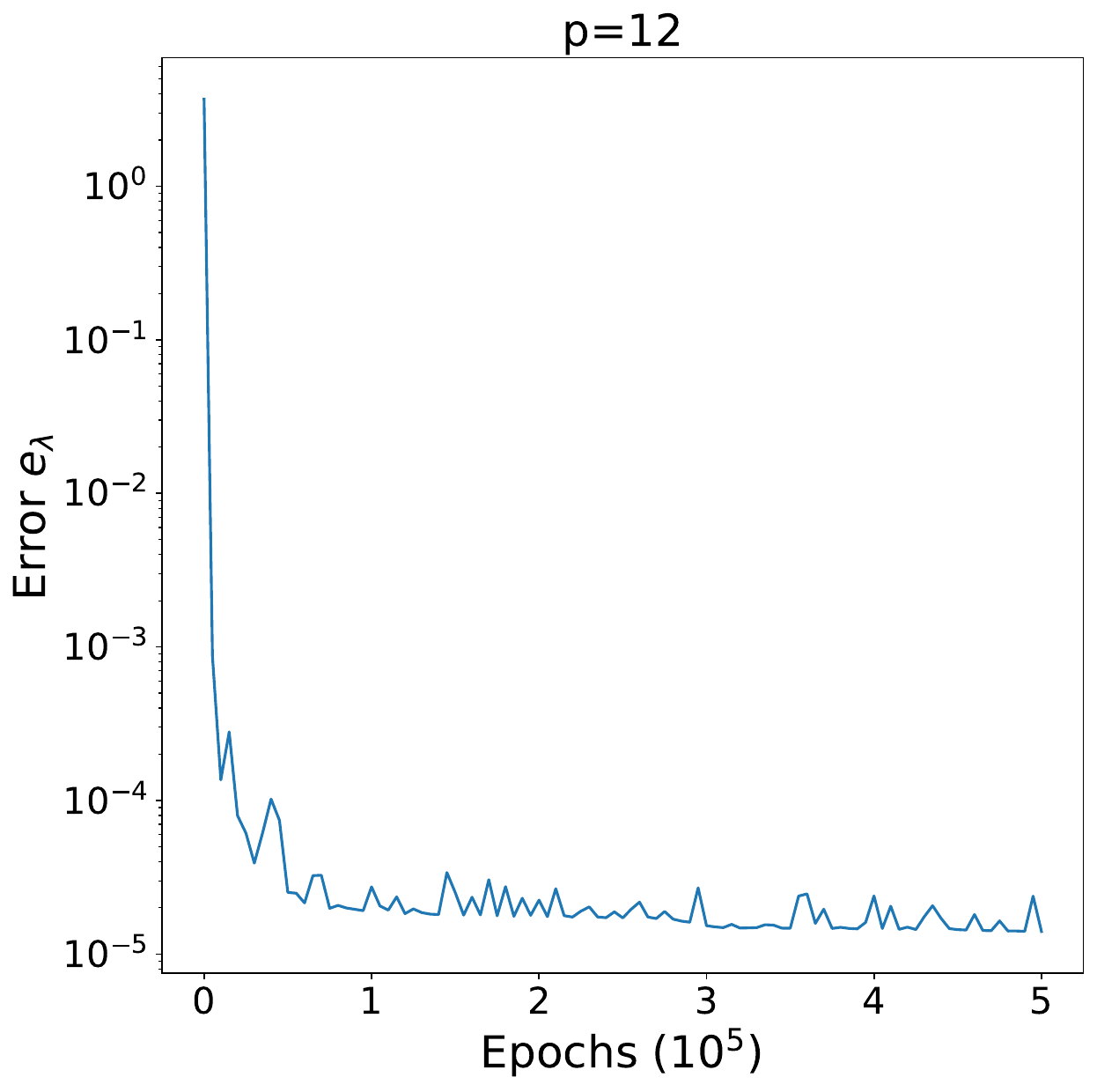}
\includegraphics[width=2.75cm,height=2.5cm]{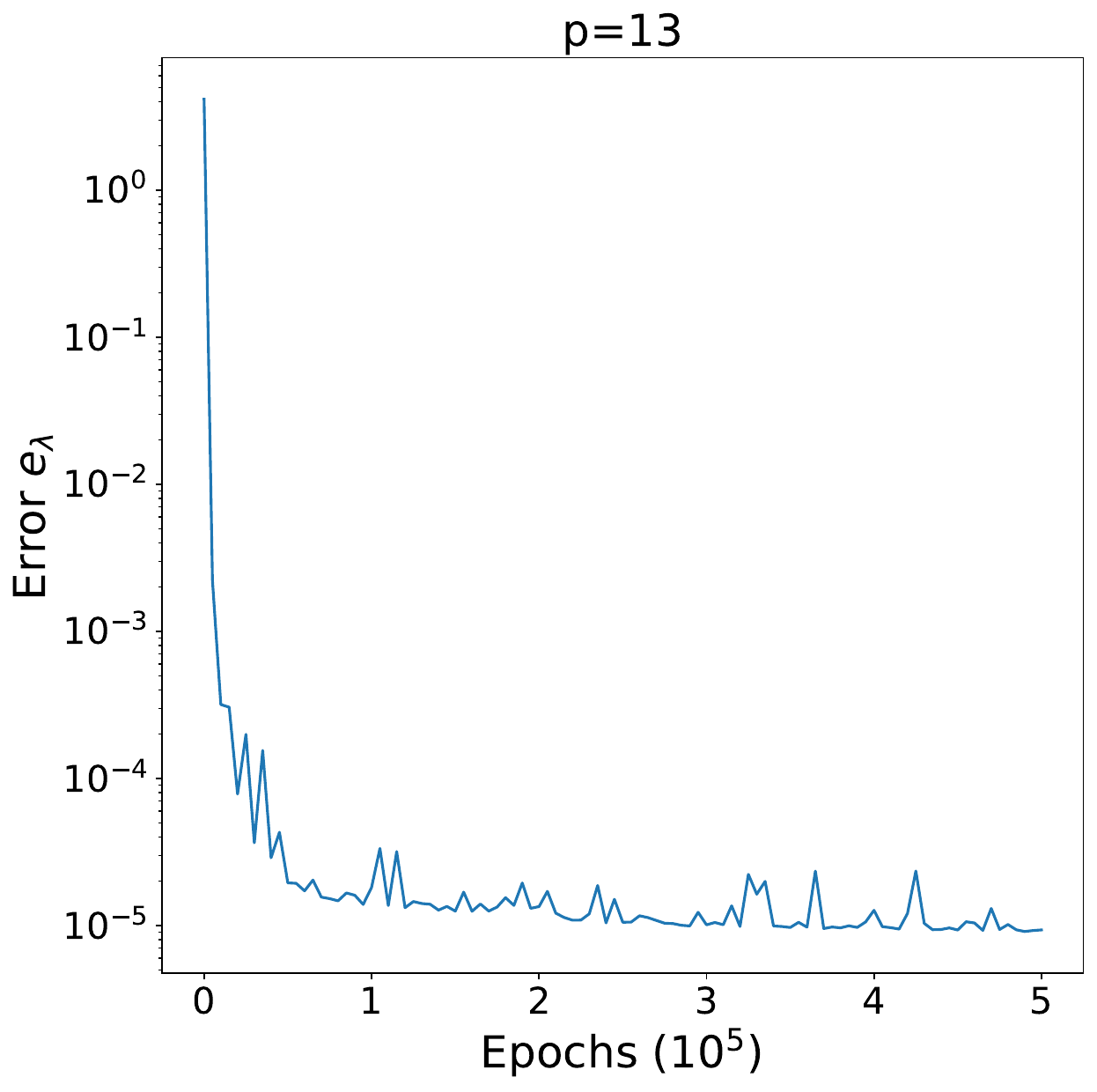}
\includegraphics[width=2.75cm,height=2.5cm]{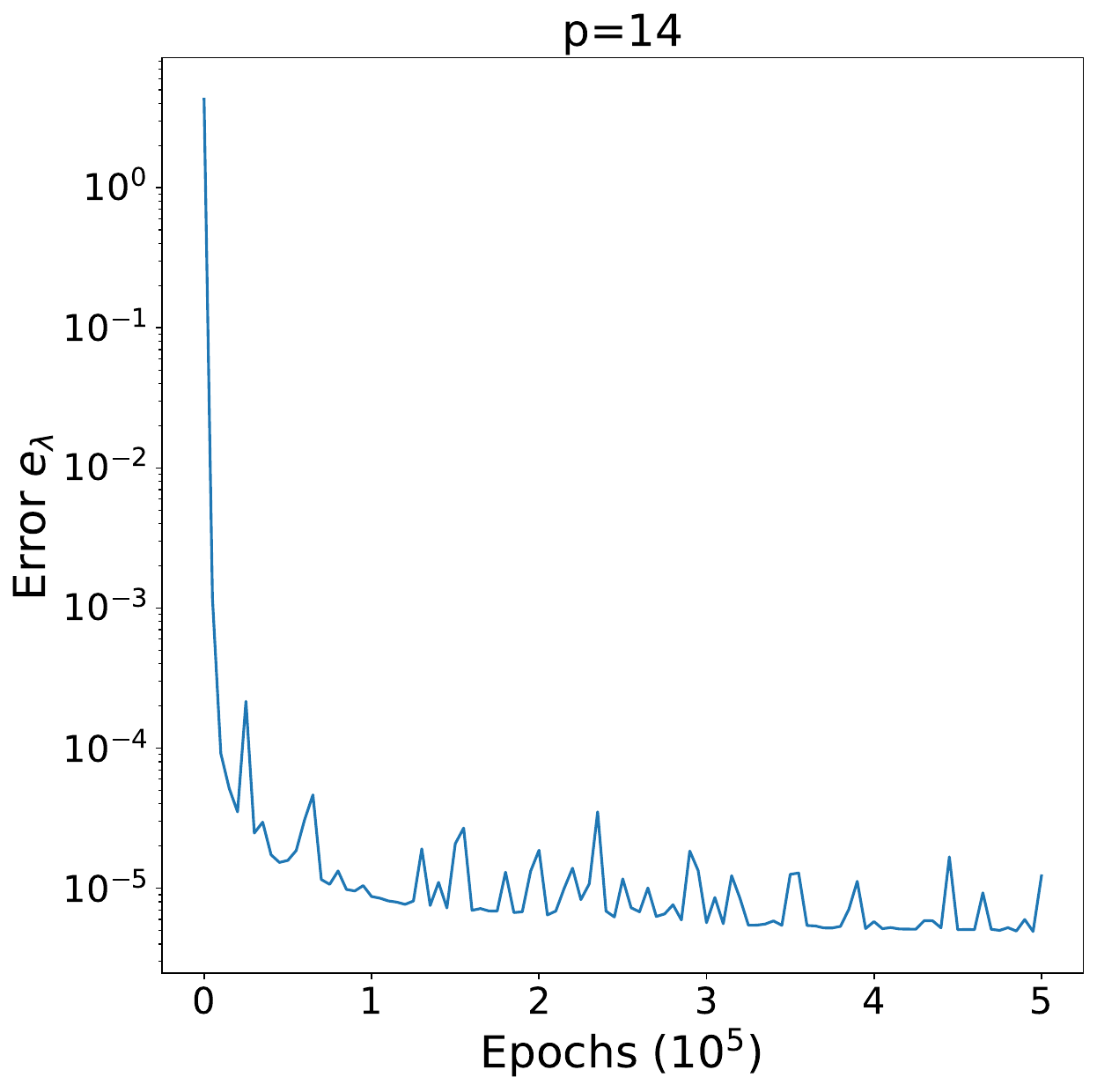}
\includegraphics[width=2.75cm,height=2.5cm]{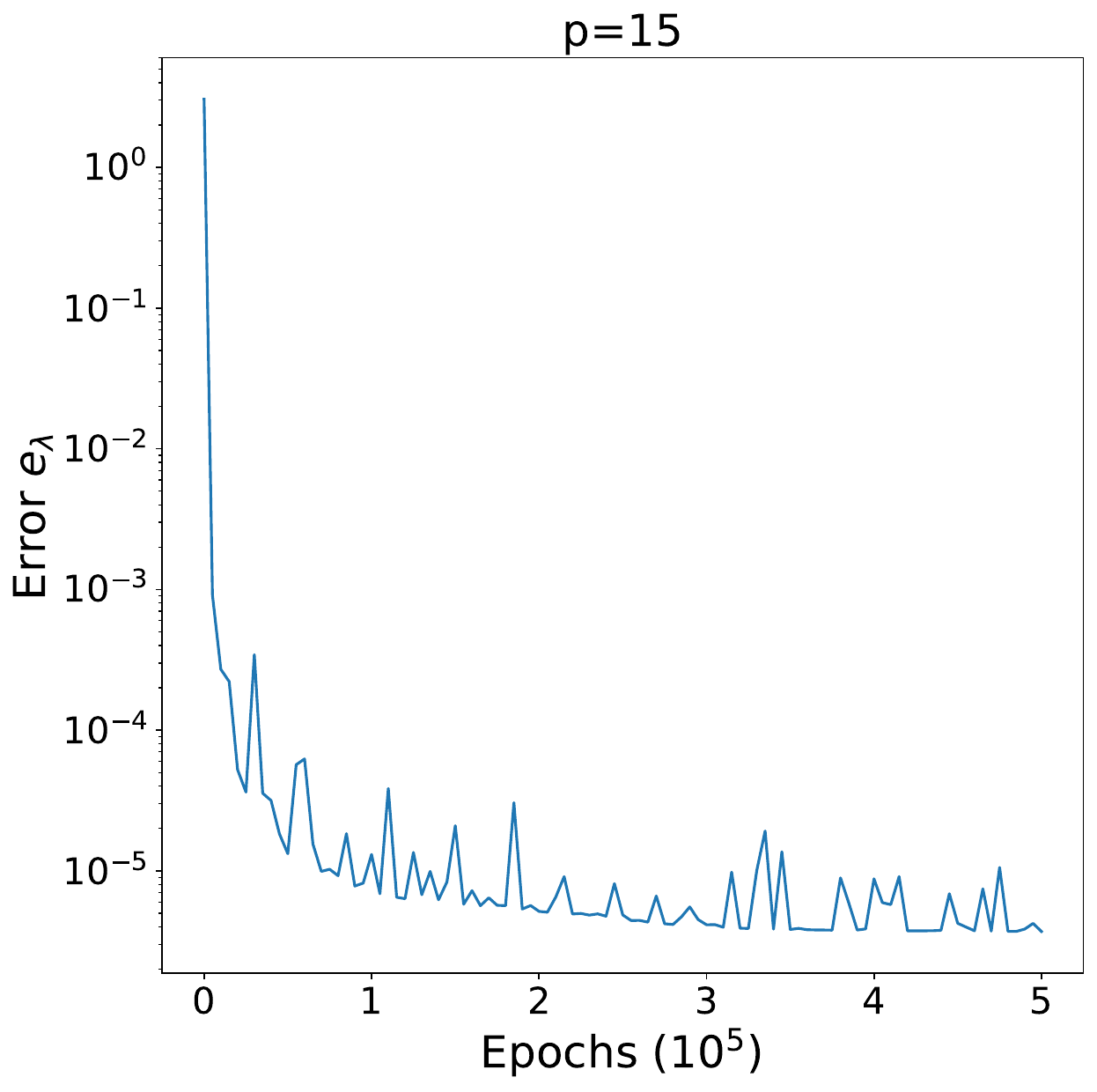}\\
\includegraphics[width=2.75cm,height=2.5cm]{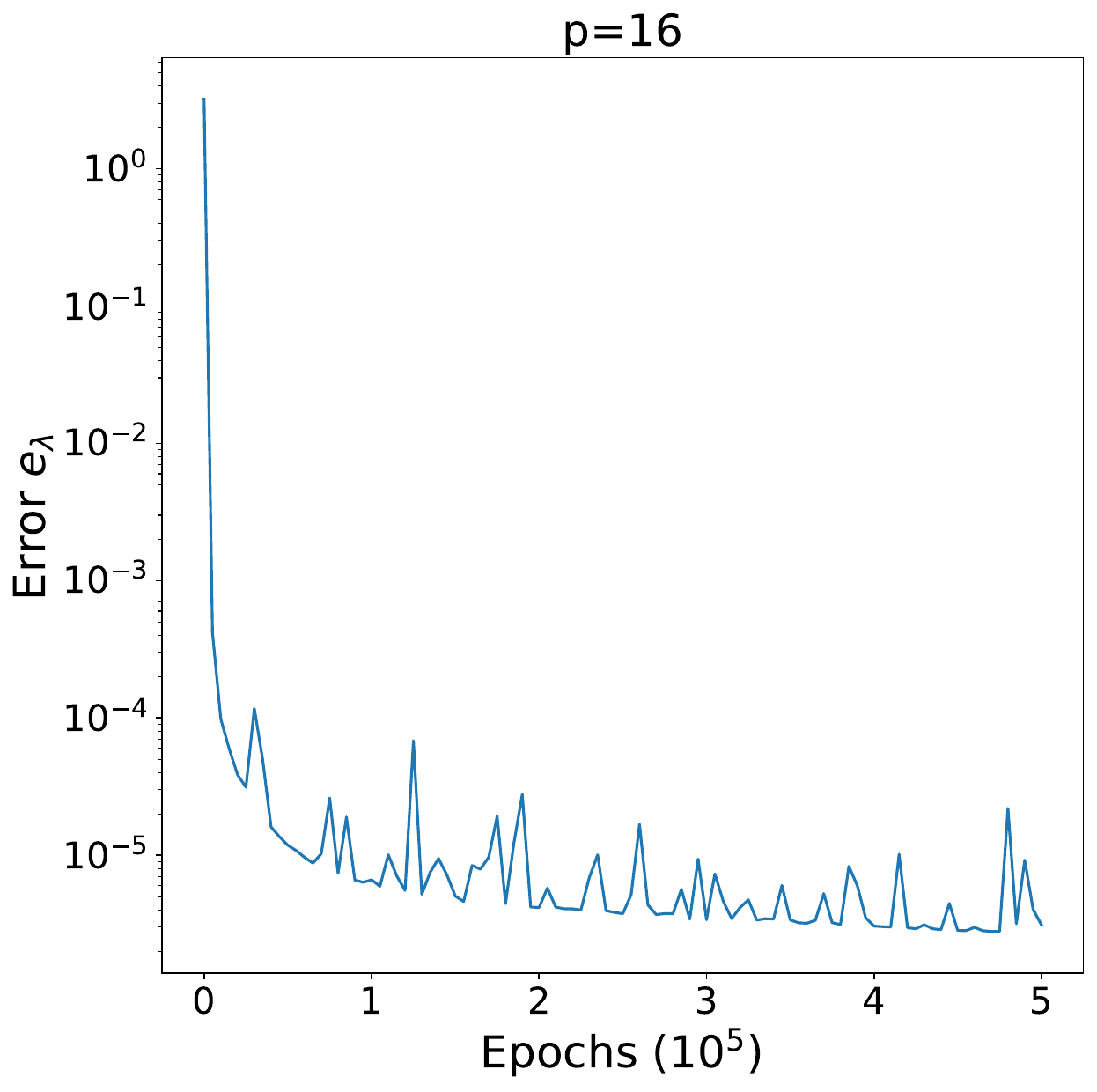}
\includegraphics[width=2.75cm,height=2.5cm]{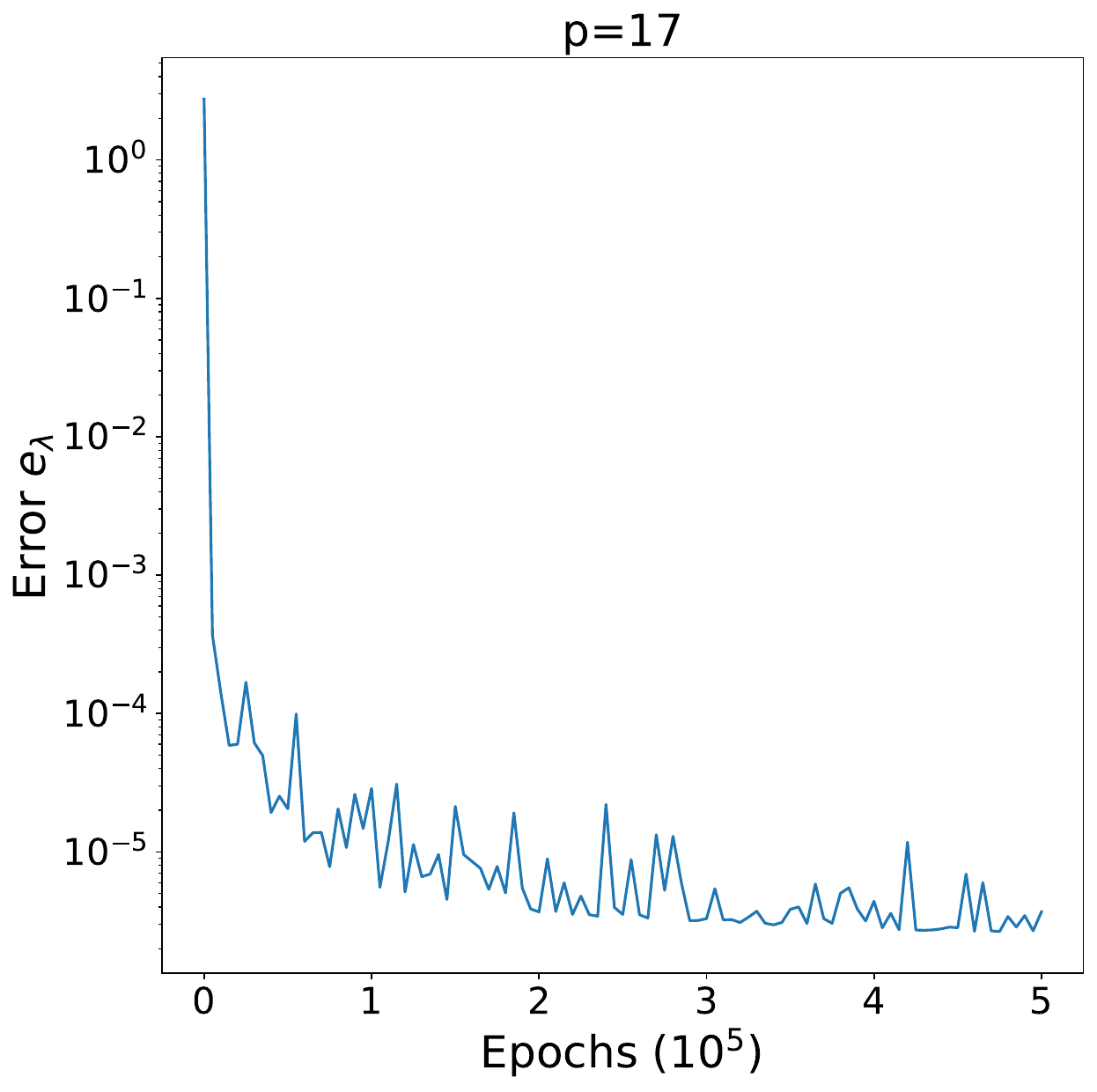}
\includegraphics[width=2.75cm,height=2.5cm]{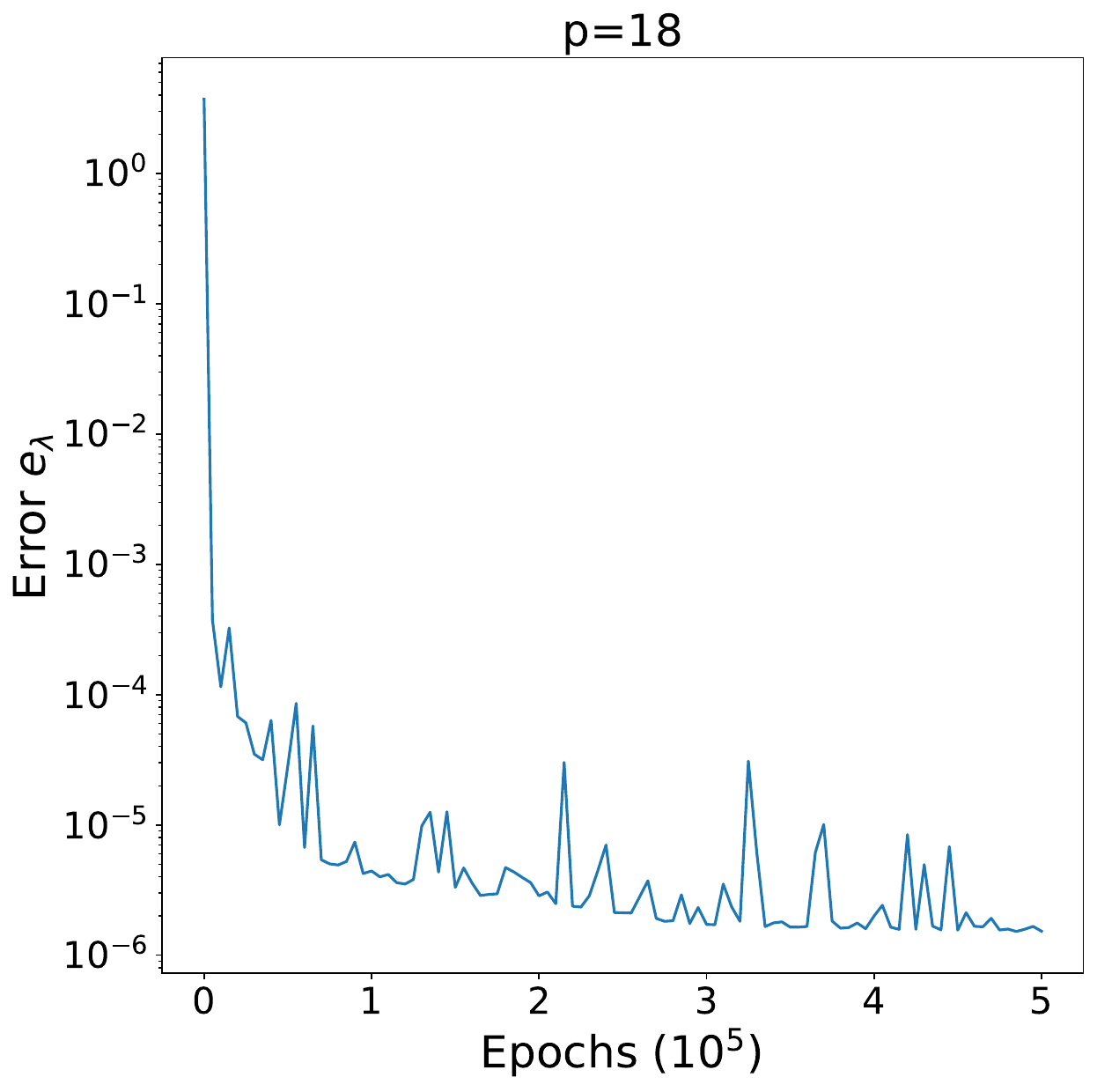}
\includegraphics[width=2.75cm,height=2.5cm]{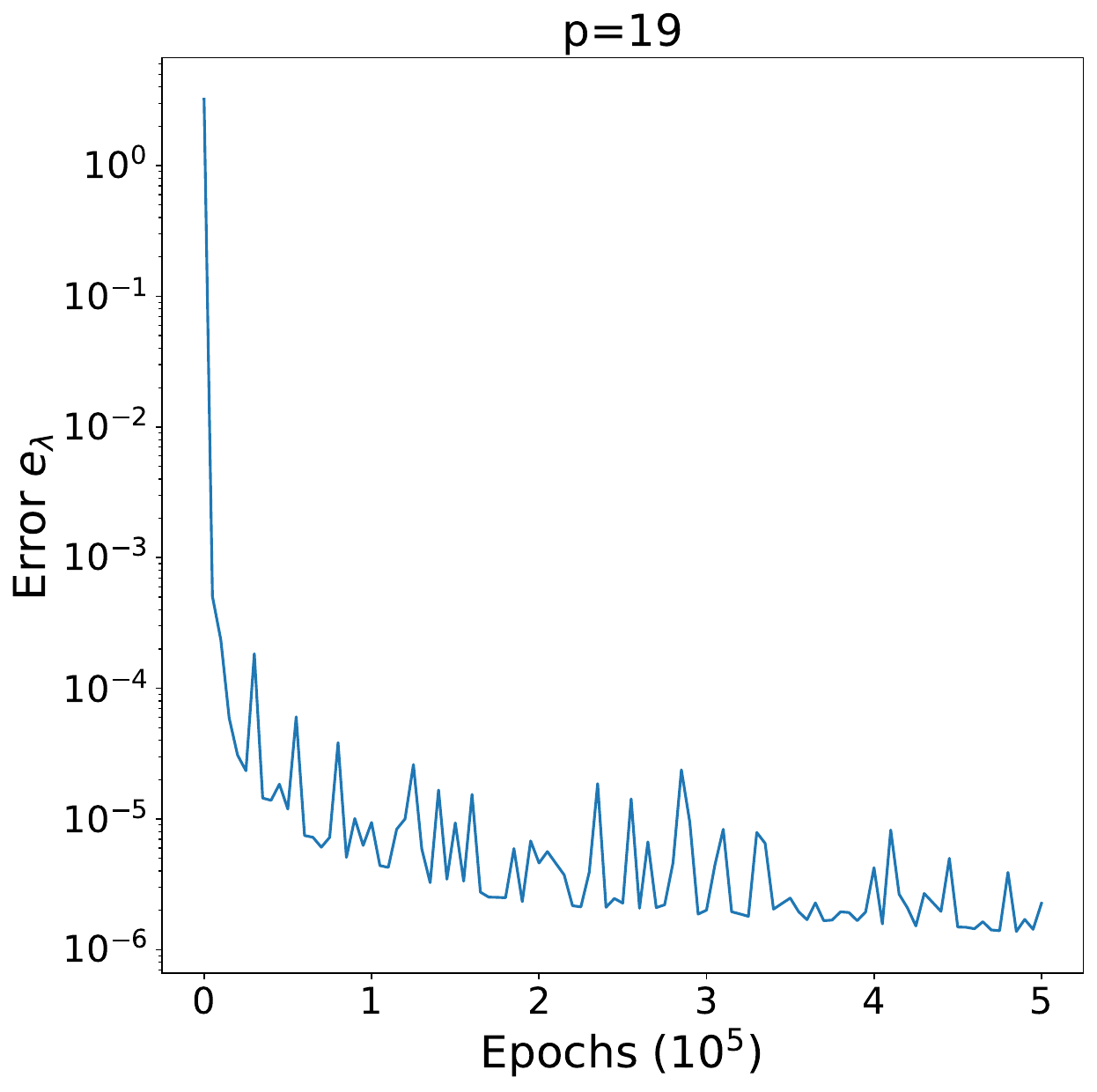}
\includegraphics[width=2.75cm,height=2.5cm]{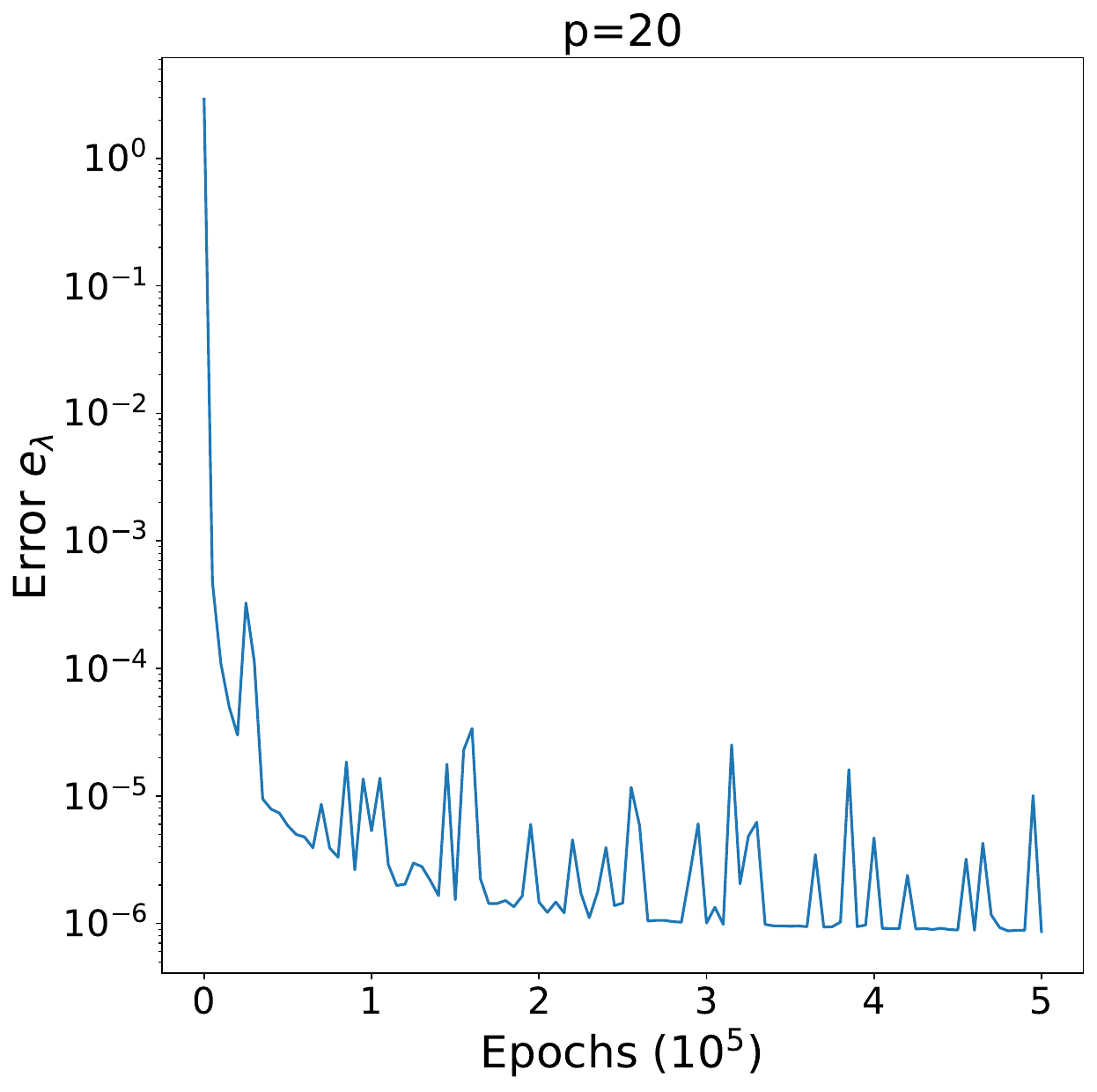}\\
\includegraphics[width=2.75cm,height=2.5cm]{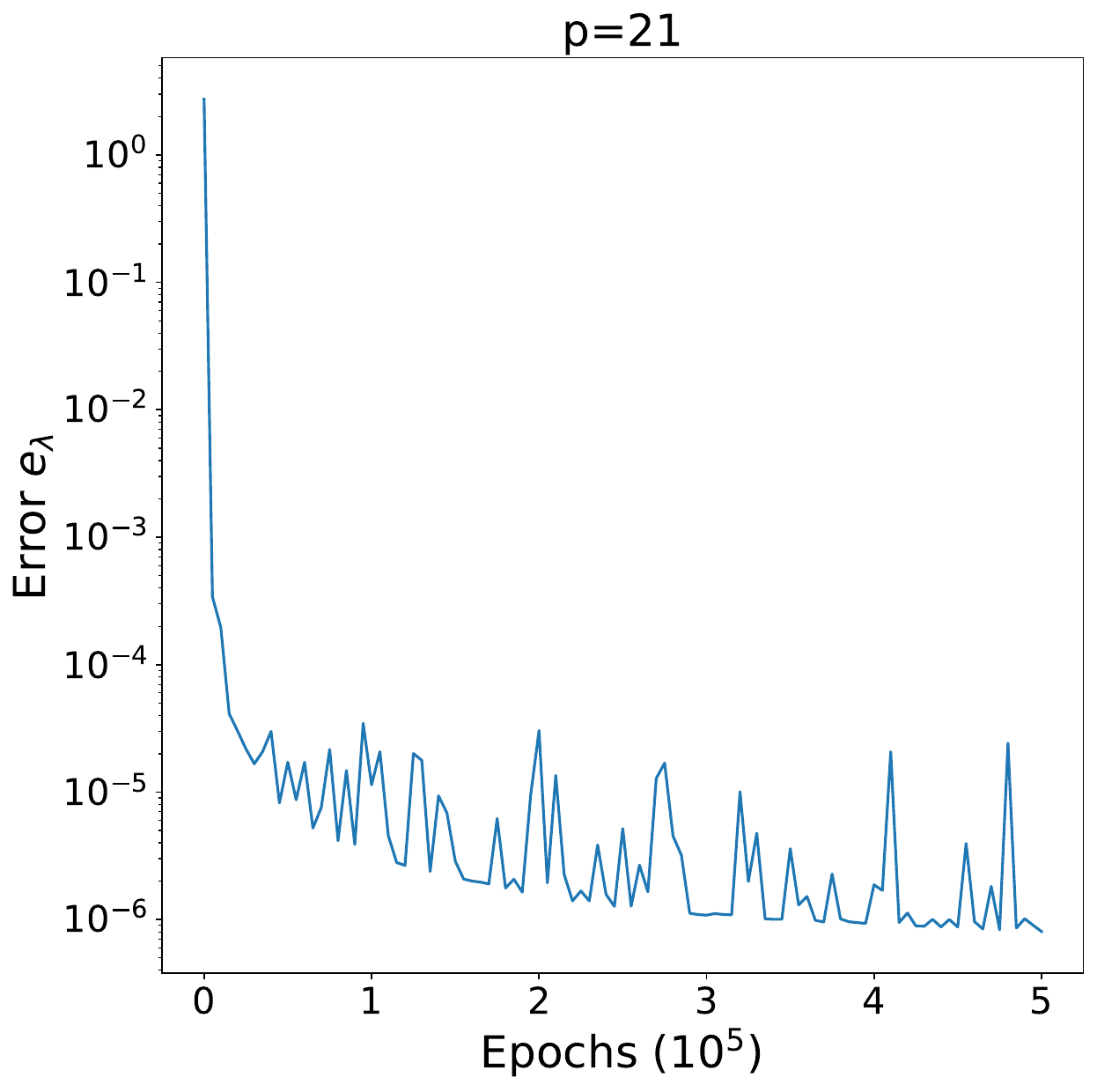}
\includegraphics[width=2.75cm,height=2.5cm]{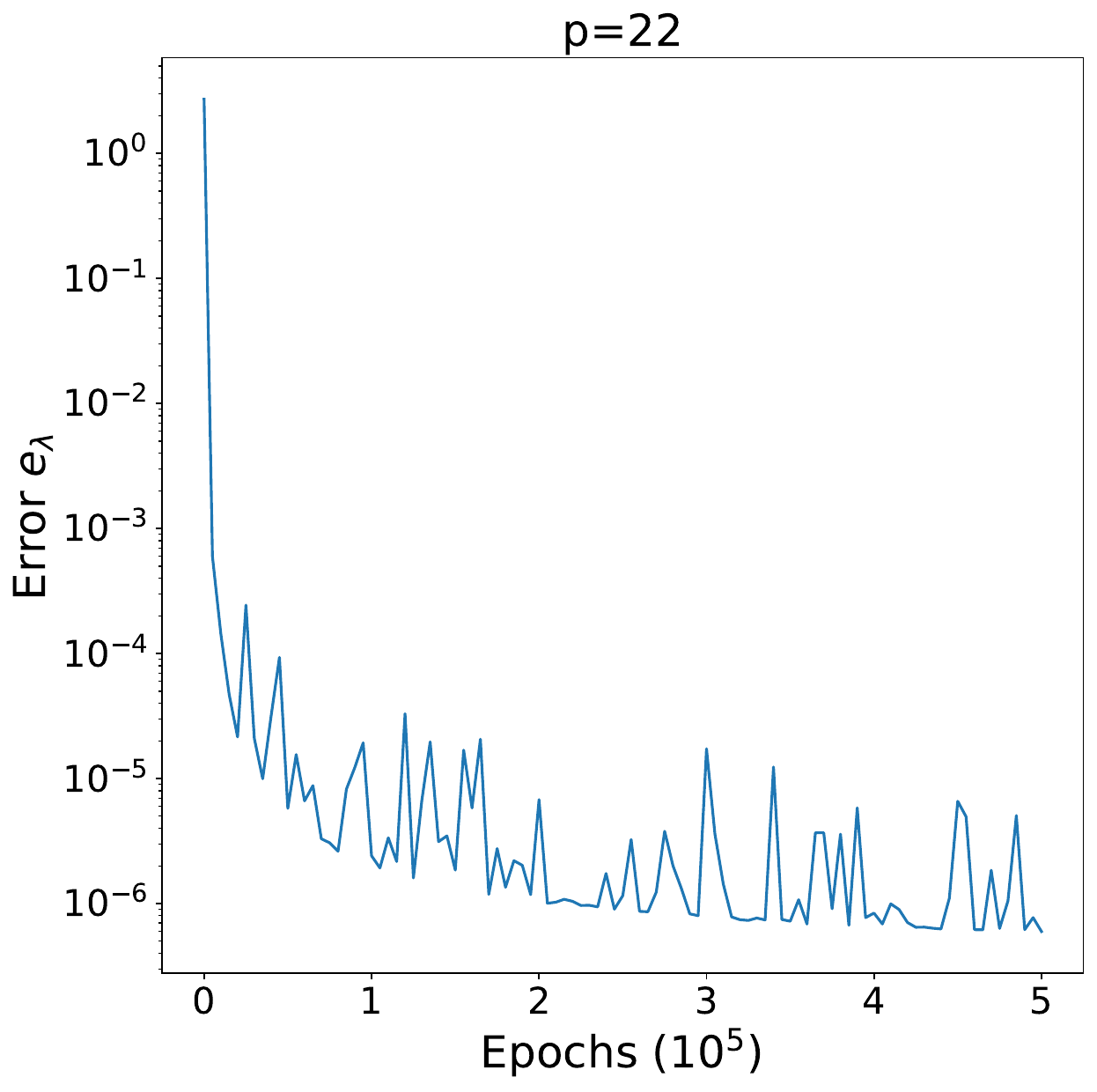}
\includegraphics[width=2.75cm,height=2.5cm]{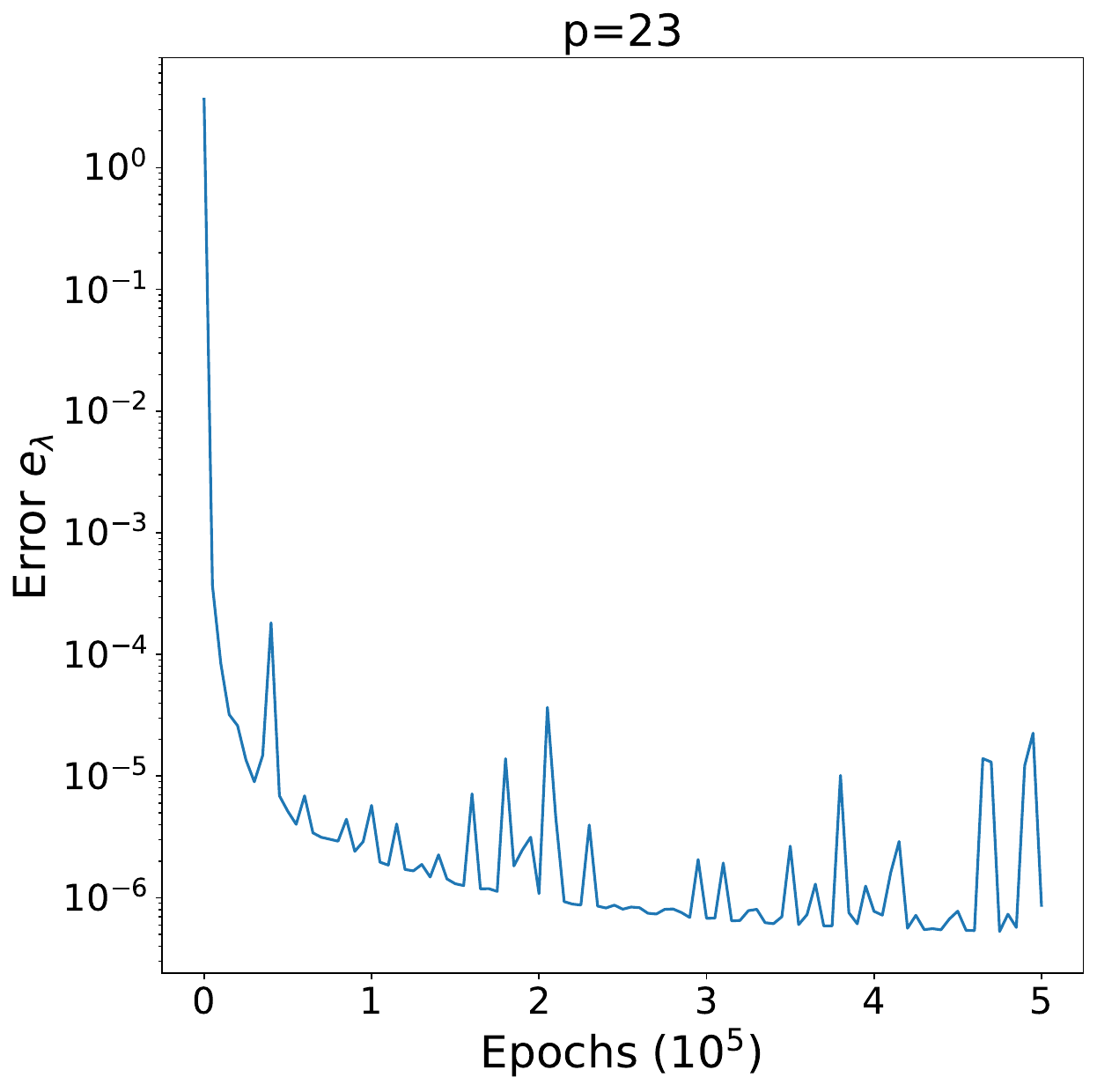}
\includegraphics[width=2.75cm,height=2.5cm]{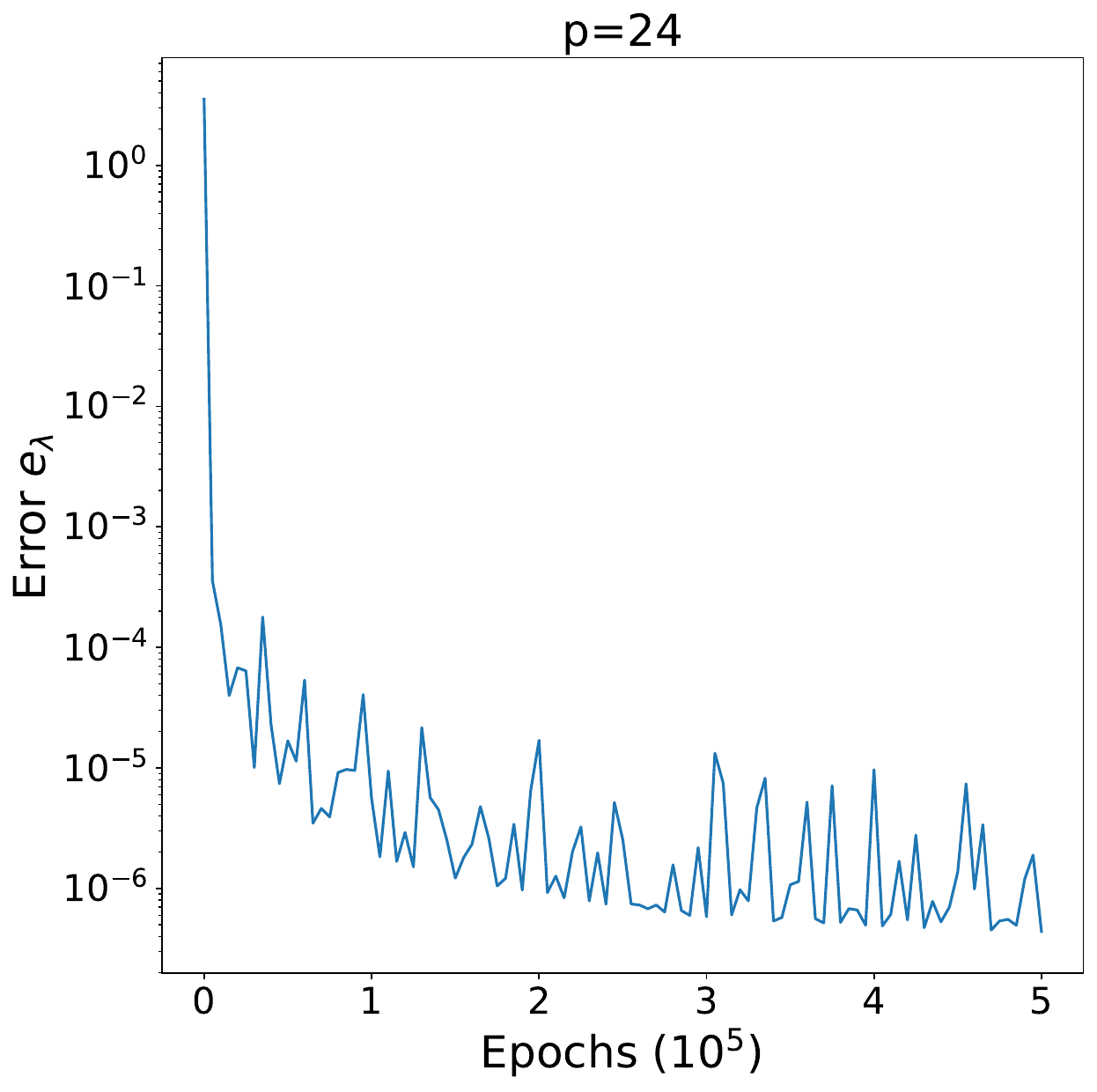}
\includegraphics[width=2.75cm,height=2.5cm]{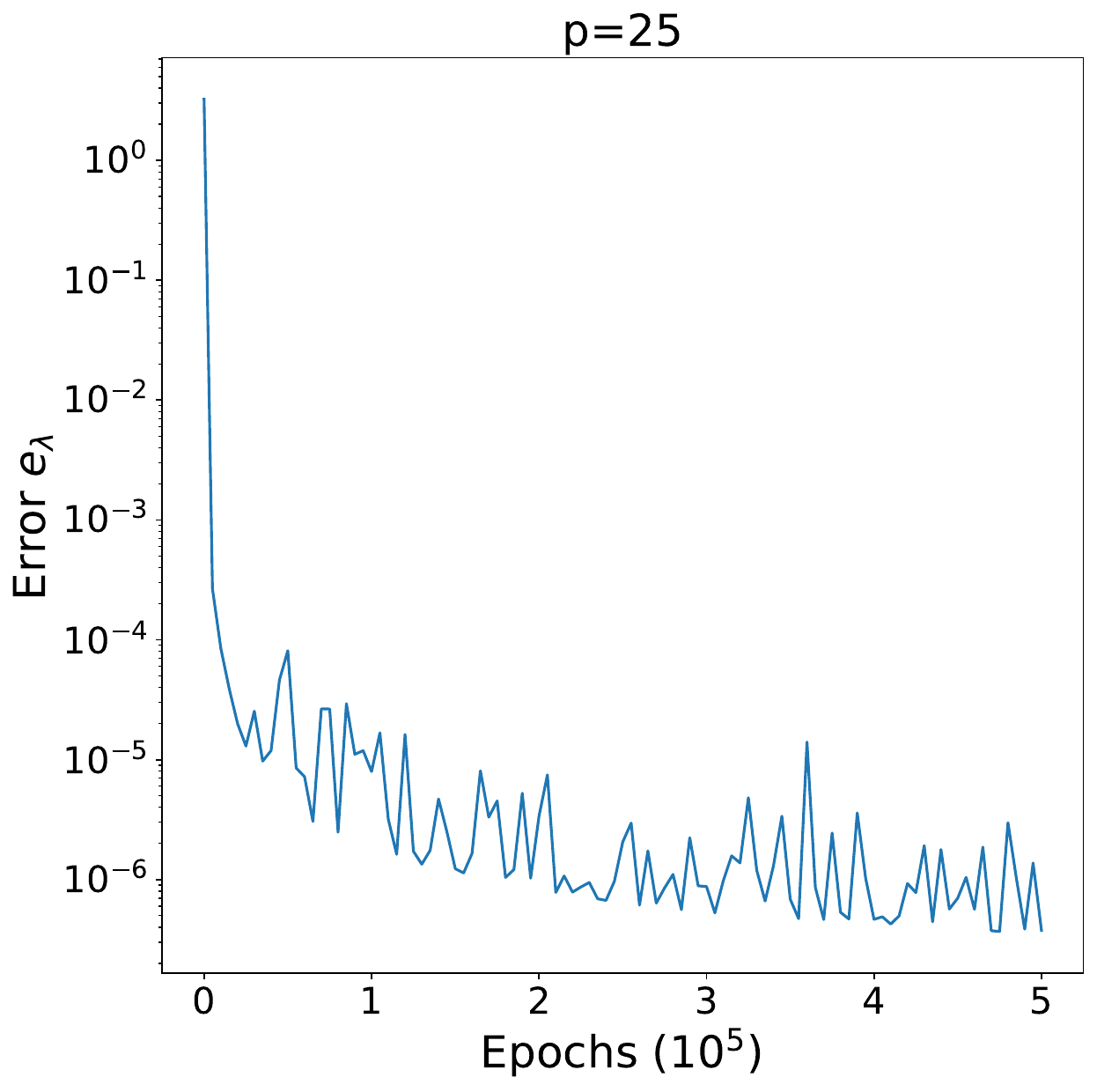}\\
\includegraphics[width=2.75cm,height=2.5cm]{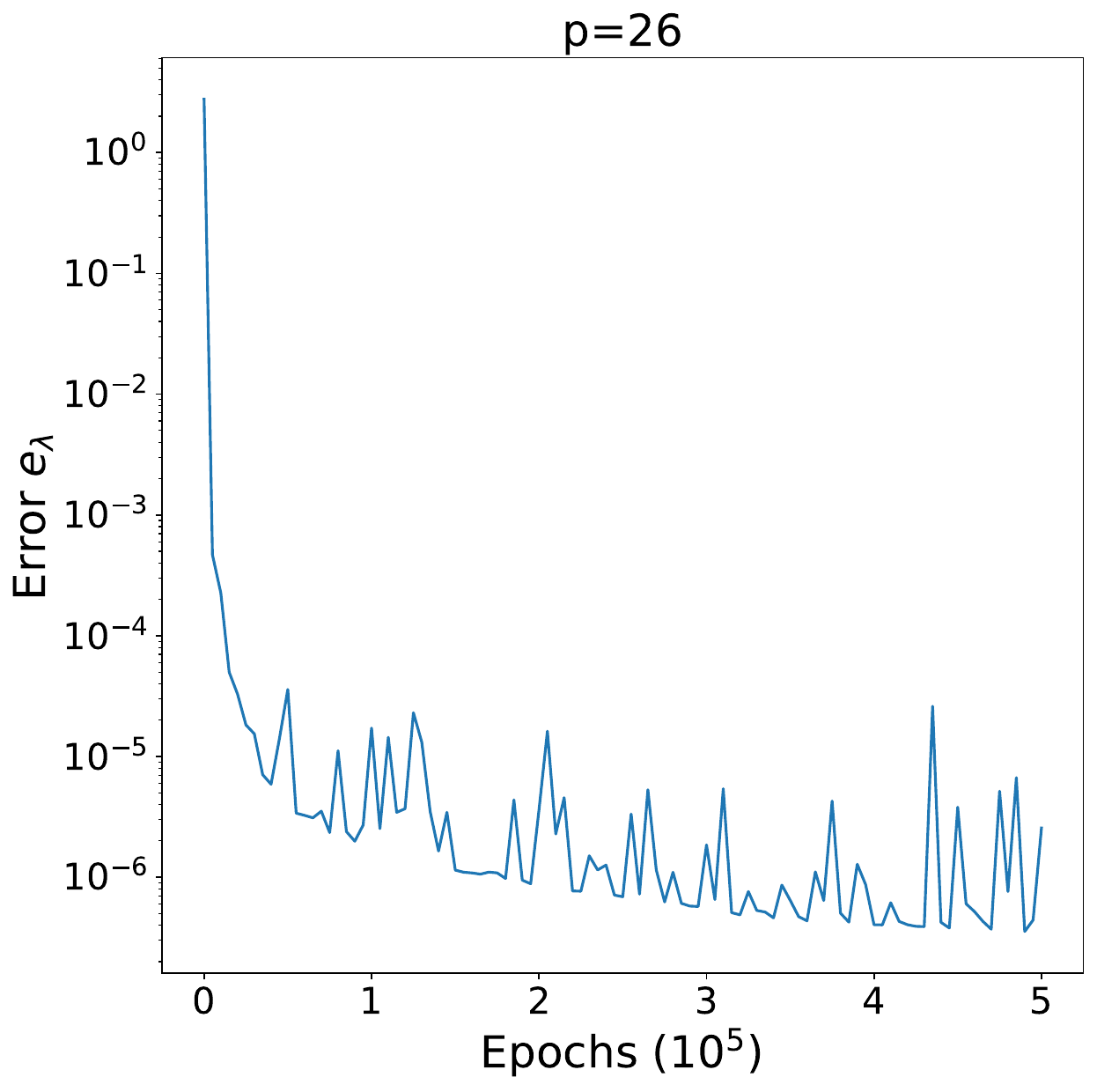}
\includegraphics[width=2.75cm,height=2.5cm]{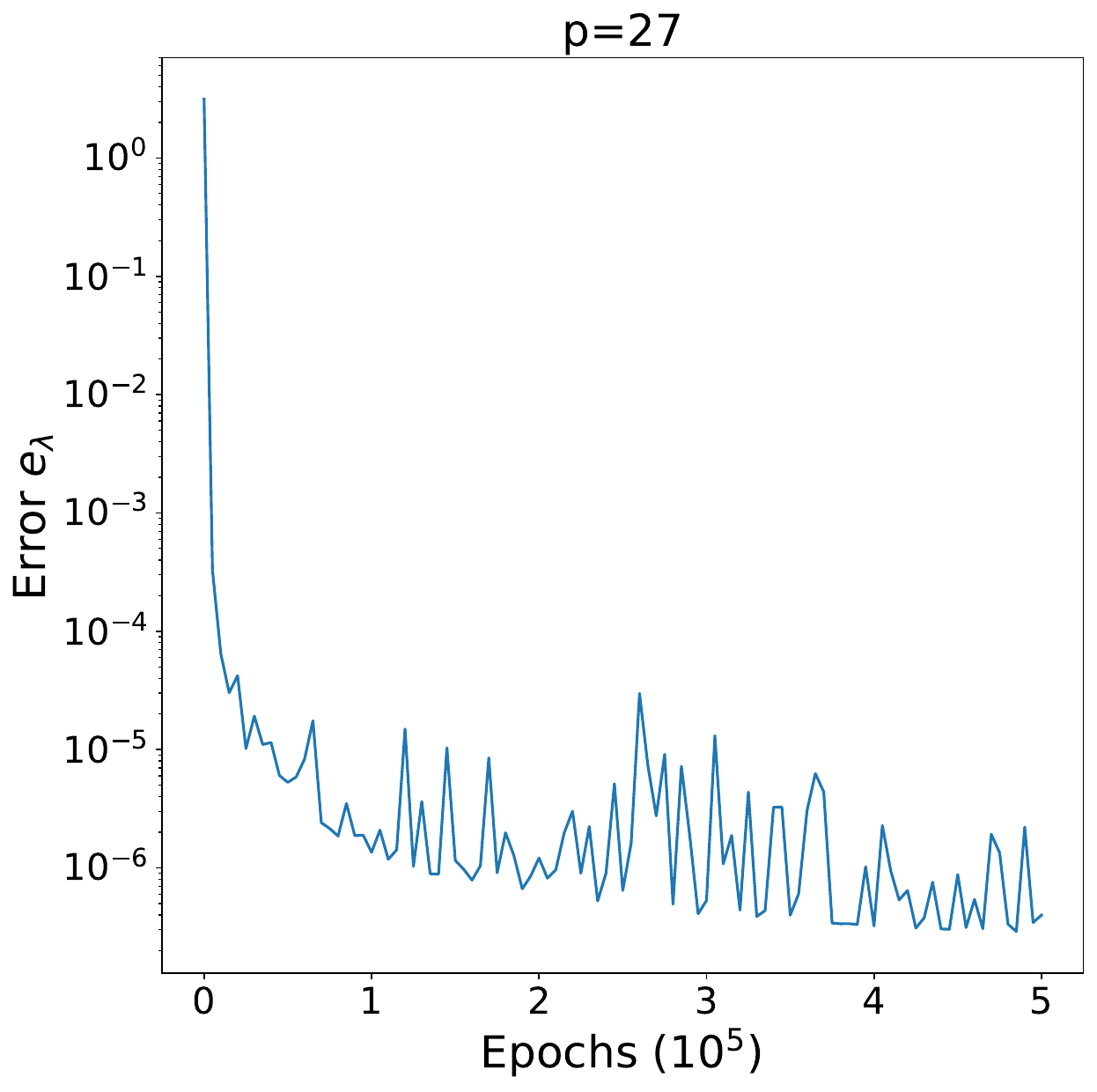}
\includegraphics[width=2.75cm,height=2.5cm]{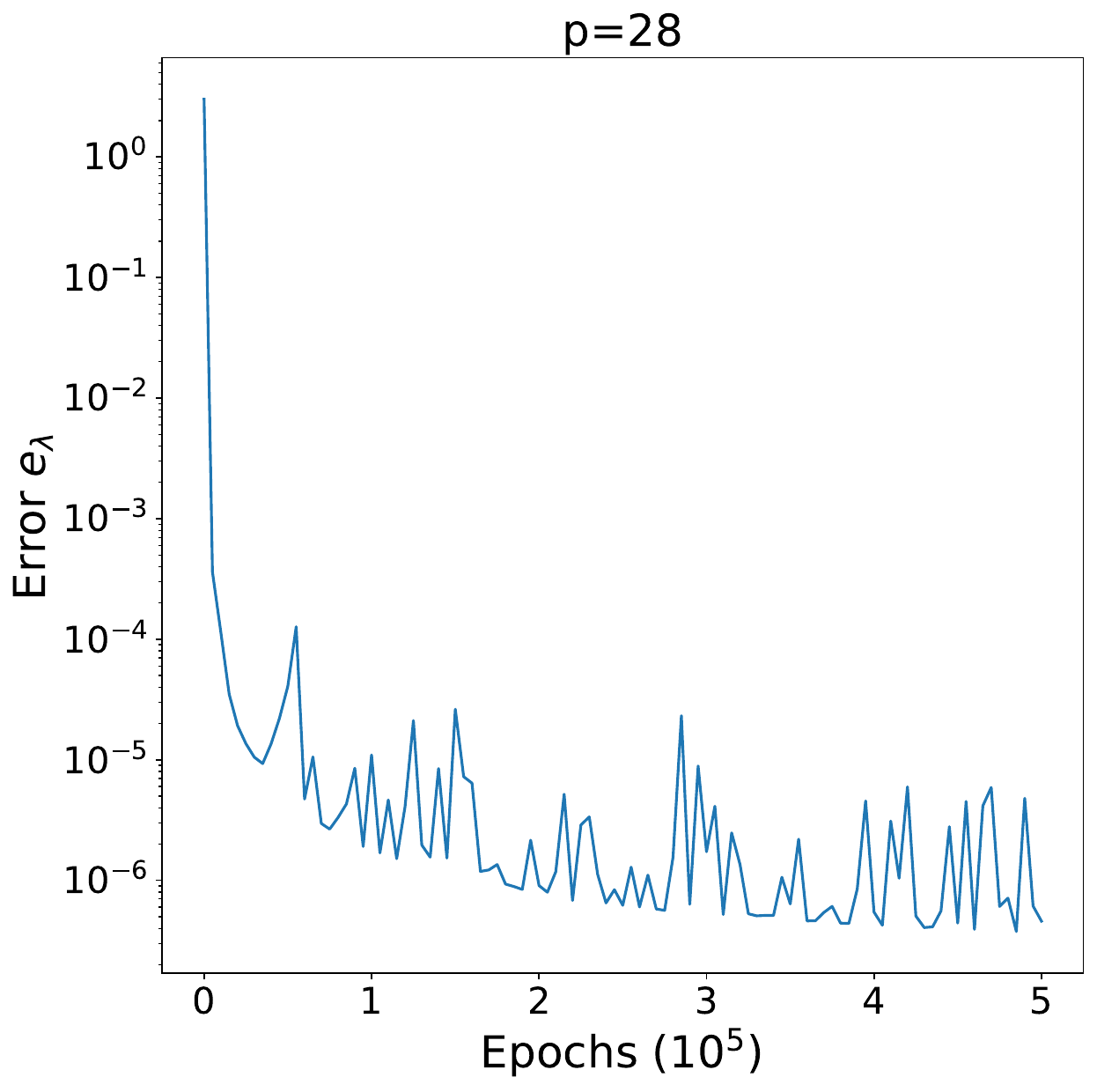}
\includegraphics[width=2.75cm,height=2.5cm]{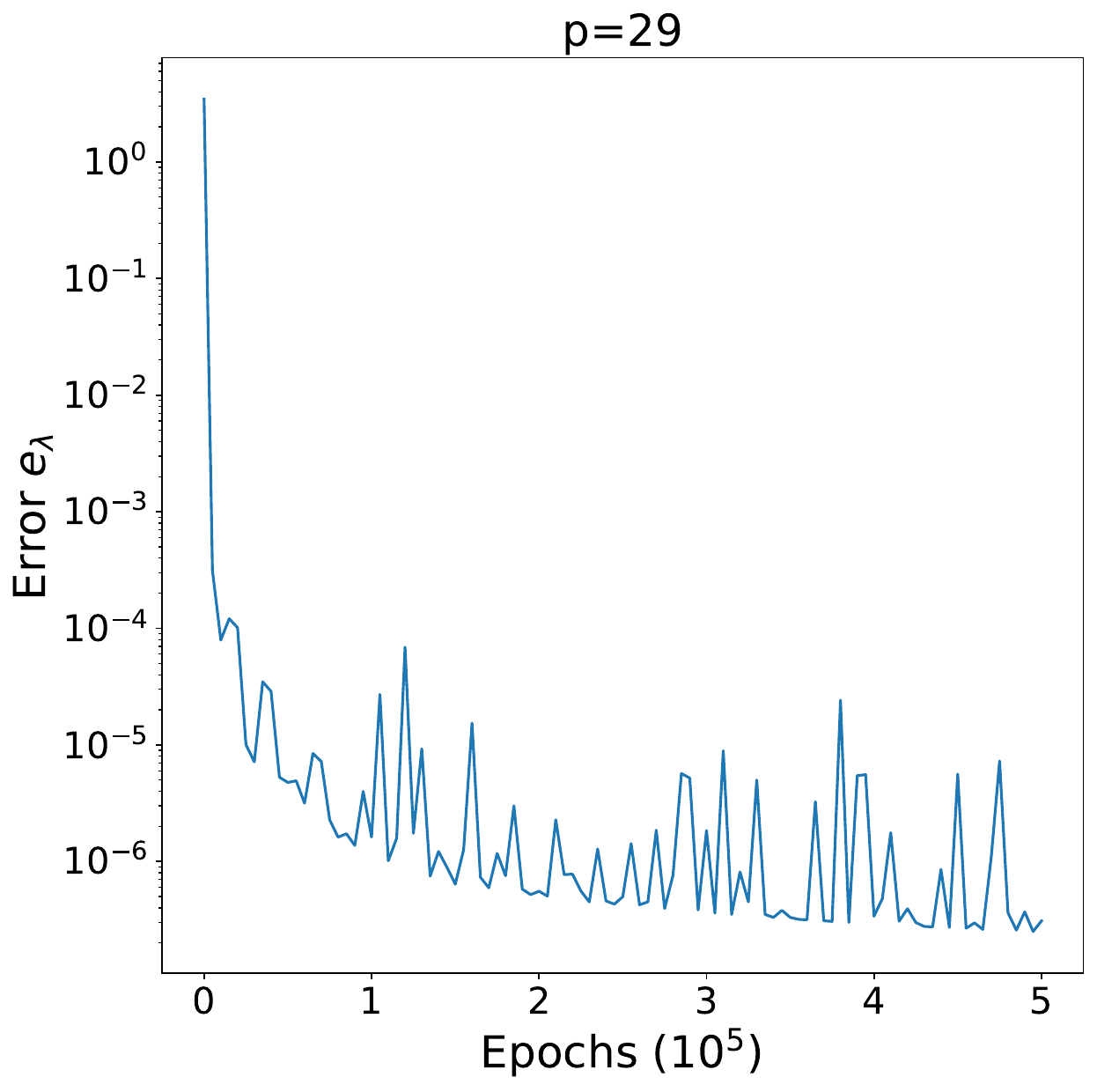}
\includegraphics[width=2.75cm,height=2.5cm]{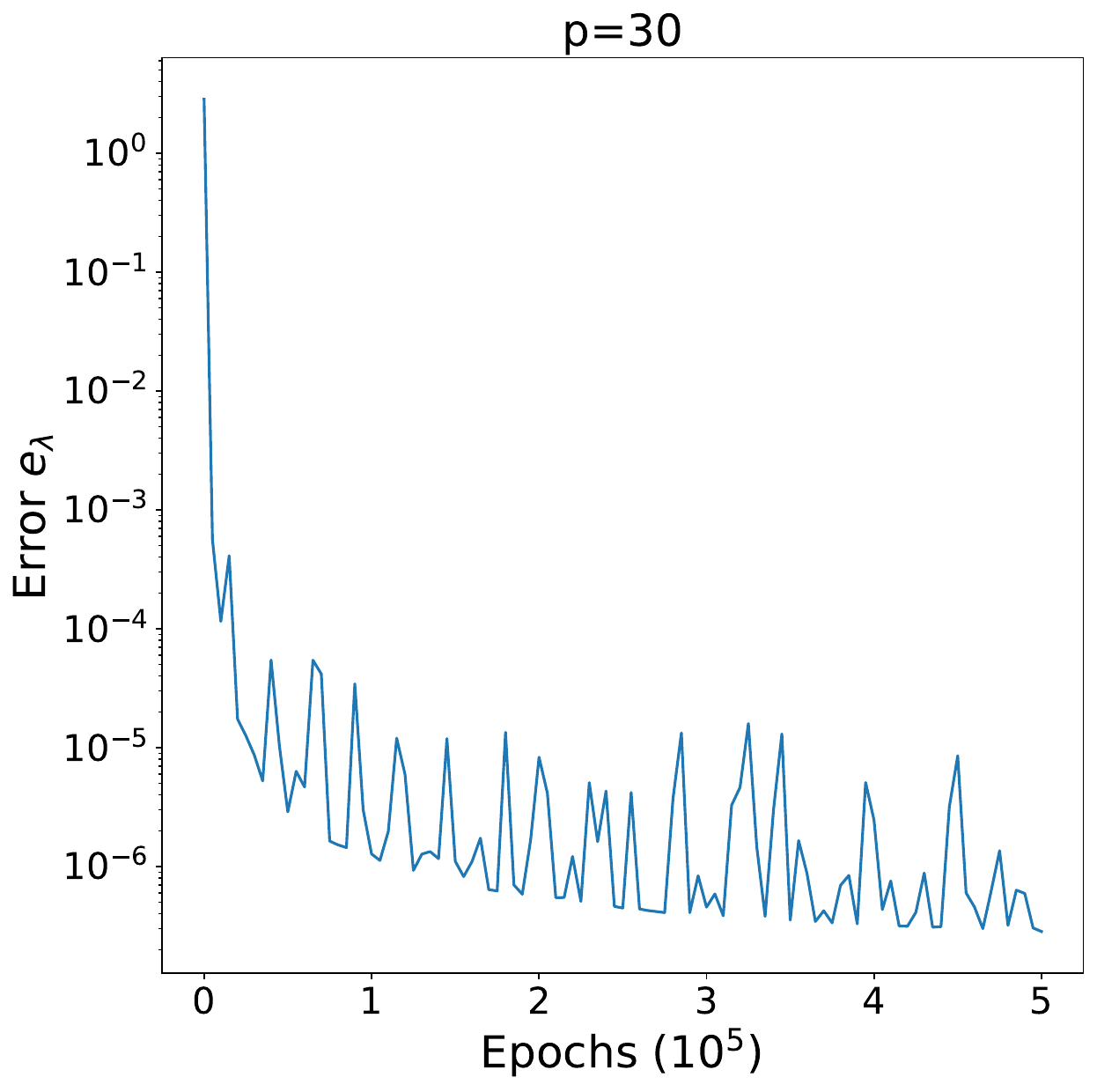}\\
\caption{Relative errors during the training process for the coupled harmonic oscillator:
the rank $p$ increases from 1 to 30.}\label{fig_coupled}
\end{figure}

\begin{figure}[htb!]
\centering
\includegraphics[width=6cm,height=4cm]{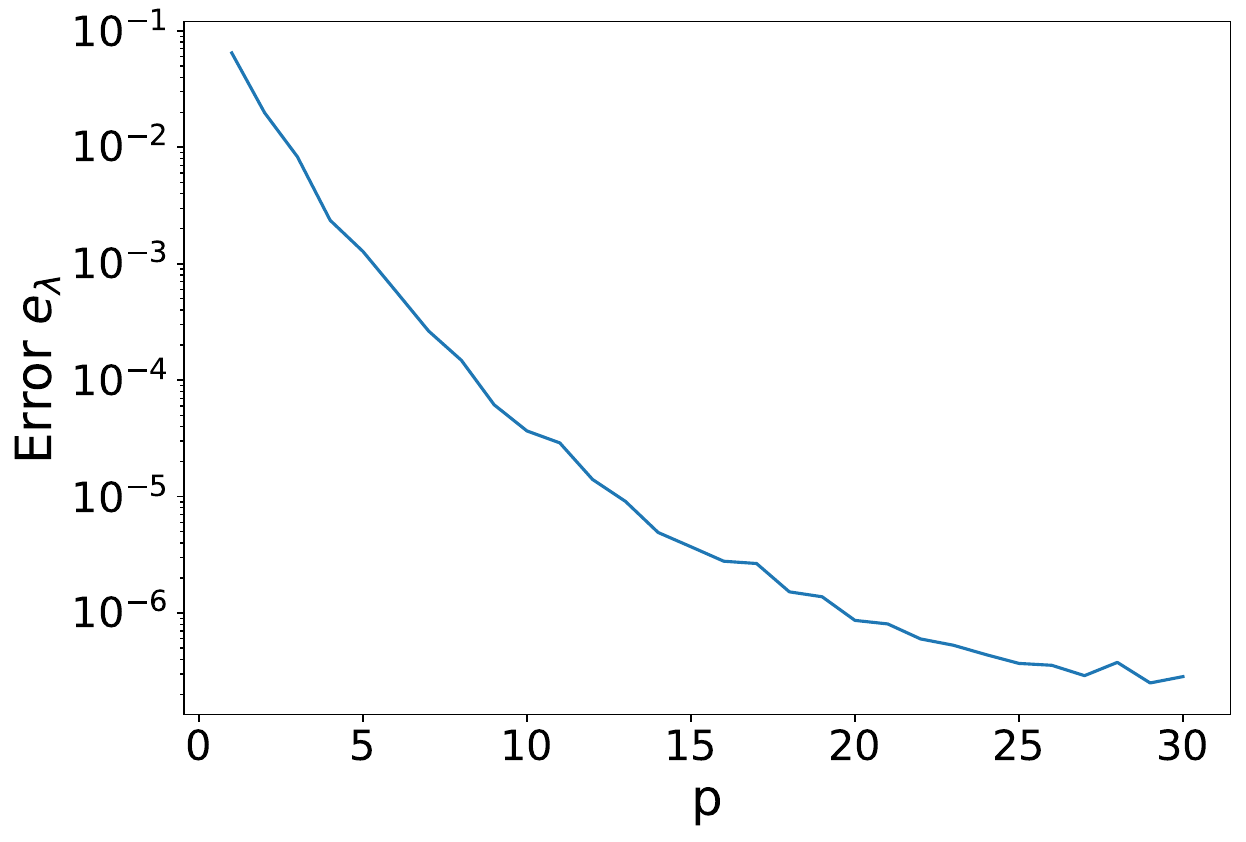}
\caption{Relative errors $e_\lambda$ versus hyperparameter $p$ for the coupled harmonic oscillator ($d=4$). }\label{fig_coupled_error_p}
\end{figure}

\subsection{Ground state of helium atom}
In the fourth example, we consider the Schr\"{o}dinger equation of the helium atom whose potential
cannot be exactly expressed as a CP decomposition of finite rank.
The wave function of the helium atom with the fixed nucleus in Euclidean coordinates
$\Psi(x_1,y_1,z_1,x_2,y_2,z_2)$ satisfies the following eigenvalue problem
\begin{eqnarray}\label{eq_helium}
-\frac{1}{2}\Delta\Psi-\frac{2\Psi}{r_1}-\frac{2\Psi}{r_2}+\frac{\Psi}{r_{12}}=E\Psi,
\end{eqnarray}
where
\begin{eqnarray*}
r_1^2&=&x_1^2+y_1^2+z_1^2, \ \ \ \  r_2^2=x_2^2+y_2^2+z_2^2,\nonumber\\
r_{12}^2&=&(x_1-x_2)^2+(y_1-y_2)^2+(z_1-z_2)^2.
\end{eqnarray*}
Since the potential term $\frac{1}{r_{12}}$ in (\ref{eq_helium}) can not be expressed as a
CP decomposition of finite rank in either Euclidean or spherical coordinates,
it is impossible to give the analytical expressions for exact energy $E$ and wave
function $\Psi$ and it is also difficult to perform the TNN-based machine learning
directly on this potential.
Fortunately, Hylleraas \cite{Hylleraas} chose the three independent variables $r_1,r_2,\theta$, with $\theta$ being the angle between $r_1$ and $r_2$, to determine the form and the size of a triangle that is composed of the nucleus and two electrons.
The coordinates $\{r_1,r_2,\theta\}$ are enough to describe the wave function for the ground state of the helium atom and the corresponding wave function $\Psi(r_1,r_1,\theta)$ satisfies the following eigenvalue problem
\begin{eqnarray*}
-\sum_{i=1}^2\frac{1}{2r_i^2}\frac{\partial}{\partial r_i}\Big(r_i^2\frac{\partial\Psi}{\partial r_i}\Big)-\Big(\sum_{i=1}^2\frac{1}{2r_i}\Big)\cdot\Big[\frac{1}{\sin\theta}
\frac{\partial}{\partial\theta}\Big(\sin\theta\frac{\partial\Psi}{\partial\theta}\Big)\Big]
-\sum_{i=1}^2\frac{2}{r_i}\Psi+\frac{1}{r_{12}}\Psi=E\Psi.
\end{eqnarray*}
The volume of this coordinate is $r_1^2r_2^2\sin\theta$.
The potential $\frac{1}{r_{12}}$ is expanded as functions on the sphere in $\theta$:
\begin{eqnarray}\label{expansion_Legendre}
\frac{1}{r_{12}}=\sum_{\ell=0}^\infty\frac{r_<^\ell}{r_>^{\ell+1}}P_\ell (\cos\theta),
\end{eqnarray}
where $r_>=\max\{r_1, r_2\}$ and $r_<=\min\{r_1, r_2\}$, $P_\ell$ denotes Legendre polynomial of order $\ell$.
In the implementation, we truncate the expression (\ref{expansion_Legendre}) into $20$ terms and the
computational domain from $[0,+\infty)^2\times[0,\pi]$ to $[0,5]^2\times[0,\pi]$.
The benchmark energy for the helium atom is set to be -2.903724377 which is taken from \cite{Nakashima},
at the level of Born-Oppenheimer nonrelativistic ground state energy.
\revise{The TNN is set to be $p=20$. Each subnetwork for the variable $r_1$, $r_2$ or $\theta$, respectively,
is an FNN with two hidden layers and each hidden layer has 50 hidden neurons.}
The boundary condition is guaranteed by multiplying the subnetwork in the $r_i$ direction with $e^{-r_i}$.
We train the TNN epochs with a learning rate of 1e-04 in the first 100000 epochs and with a learning rate of 1e-05
in the subsequent 50000 steps to produce the final result.
The final energy obtained by the TNN method is -2.903781124 and the relative energy error is 1.9542e-05.
Figure \ref{fig_He} shows the radial distribution of electrons for helium atoms.
From Figure \ref{fig_He}, we know that the TNN method can give good simulations of the real electron distribution.
\begin{figure}[http!]
\centering
\includegraphics[width=6cm,height=4cm]{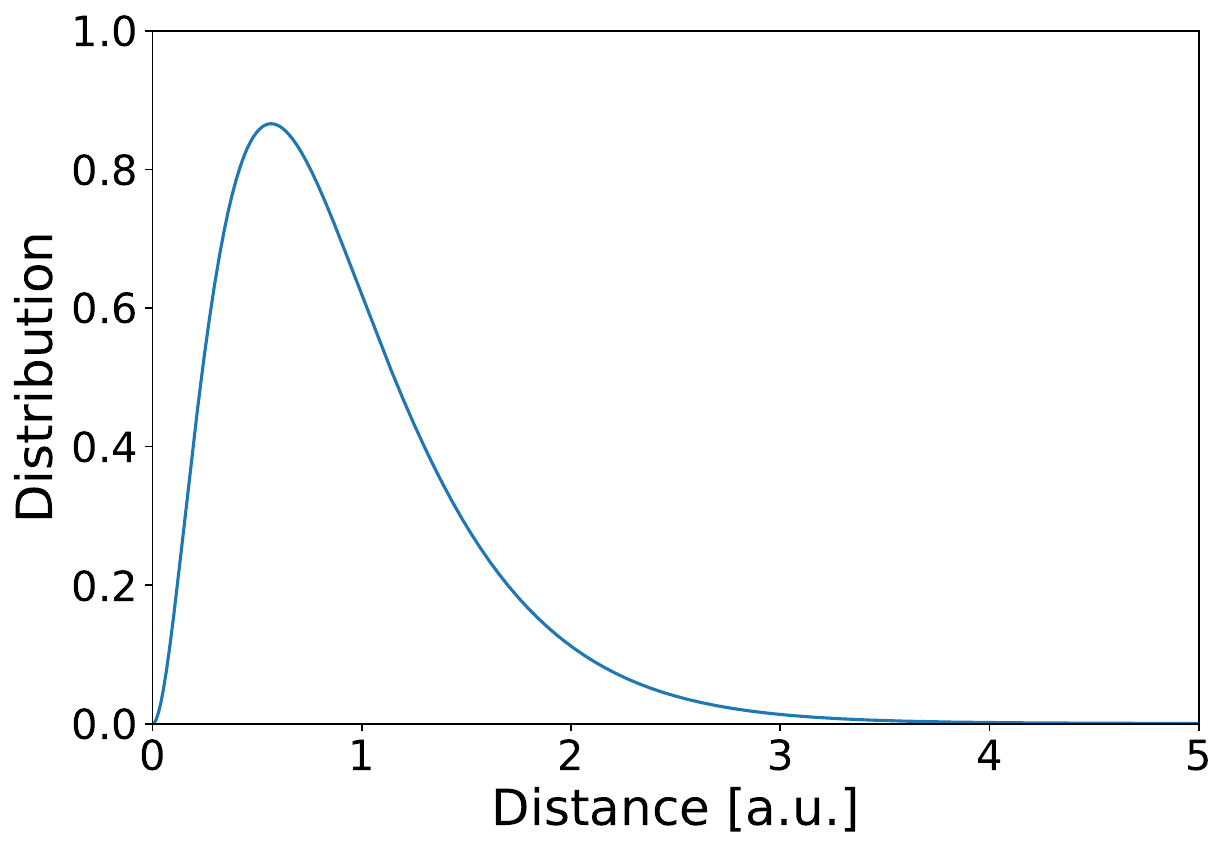}
\caption{Radial distribution of electrons for helium atom.}\label{fig_He}
\end{figure}

\subsection{Boundary value problem with Neumann boundary condition}
In the last example, different from eigenvalue problems discussed in the above subsections,  we tentatively test the performance of TNN for high-dimensional boundary value problems. For this aim, we consider the following boundary
value problem with the Neumann boundary condition
\begin{eqnarray*}\label{neumann}
\left\{
\begin{aligned}
-\Delta u+\pi^2u&=2\pi^2\sum_{i=1}^d\cos(\pi x_i),\ \ \ &x\in&[0,1]^d\\
\frac{\partial u}{\partial \mathbf n}&=0,\ \ \ &x\in&\partial[0,1]^d.
\end{aligned}
\right.
\end{eqnarray*}
Then the exact solution is
\begin{eqnarray}\label{exact_neumann}
u(x)=\sum_{i=1}^d\cos(\pi x_i).
\end{eqnarray}

We use the same loss function as \cite{EYu} and test cases with $d=5,10,20$, respectively.
\revise{For all cases, each subnetwork of TNN is a FNN with two hidden layers and each hidden layer has 50 hidden neurons.}
Referring to
the optimization tips in \cite{MIM}, we use the Adam optimizer in the first 100000 steps and
then the L-BFGS in the subsequent 50000 steps to produce the final result. The corresponding
numerical results for $p=2d$ are reported in Figure \ref{fig_neumann} and Table \ref{table_neumann}.
The final relative errors have almost the same order of magnitude for different dimensions.

\begin{figure}[htb!]
\centering
\includegraphics[width=4cm,height=4cm]{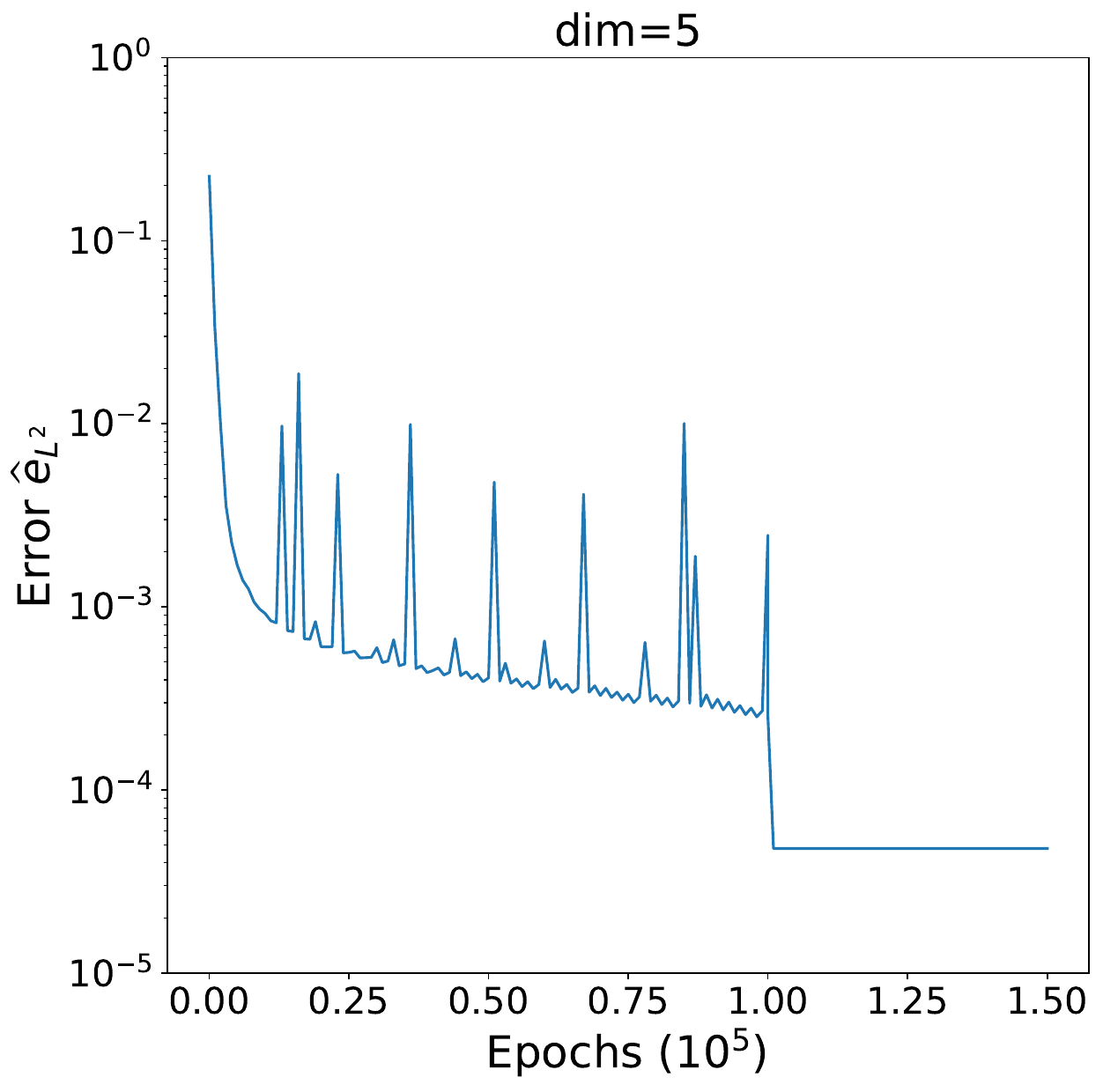}
\includegraphics[width=4cm,height=4cm]{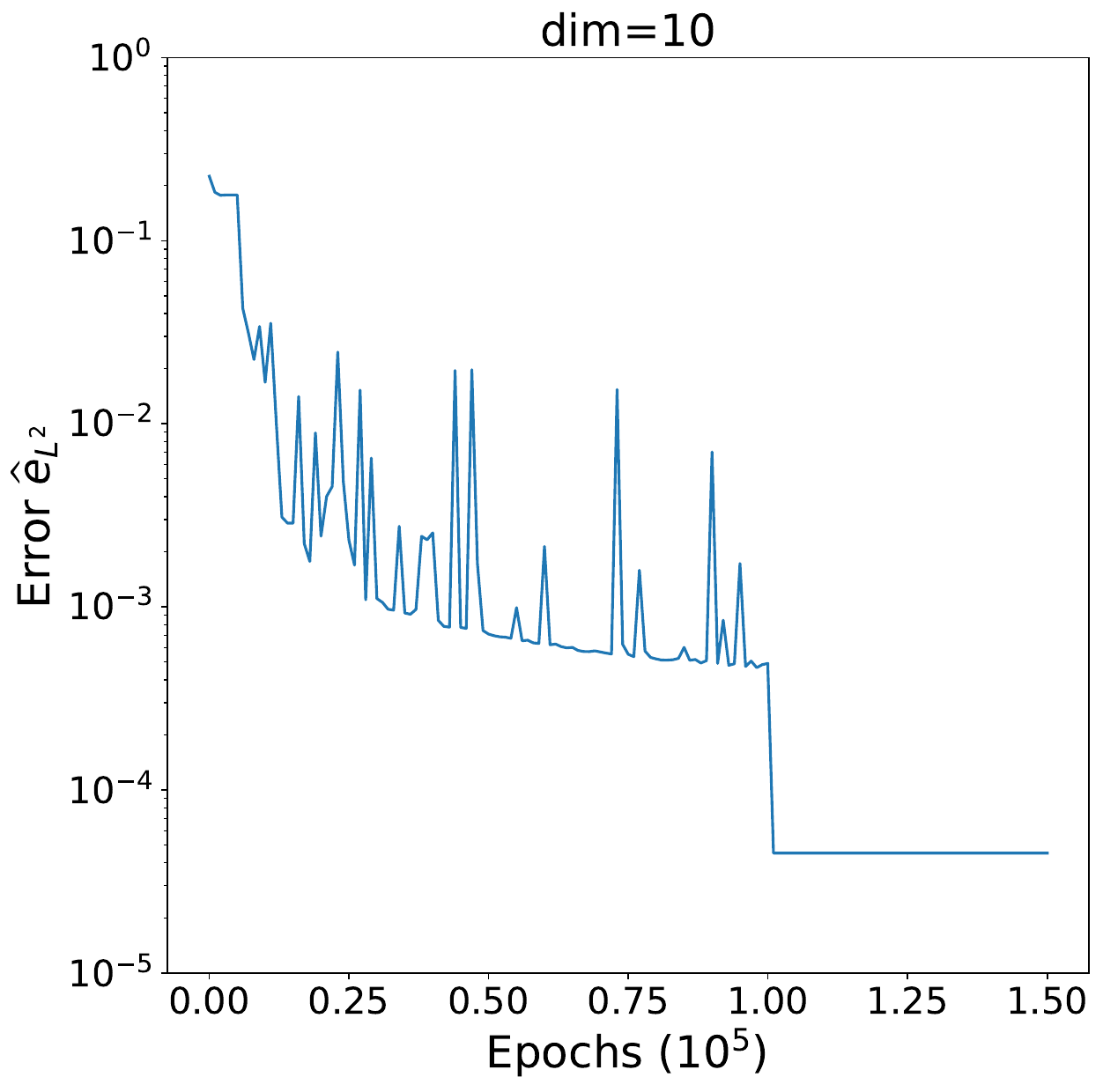}
\includegraphics[width=4cm,height=4cm]{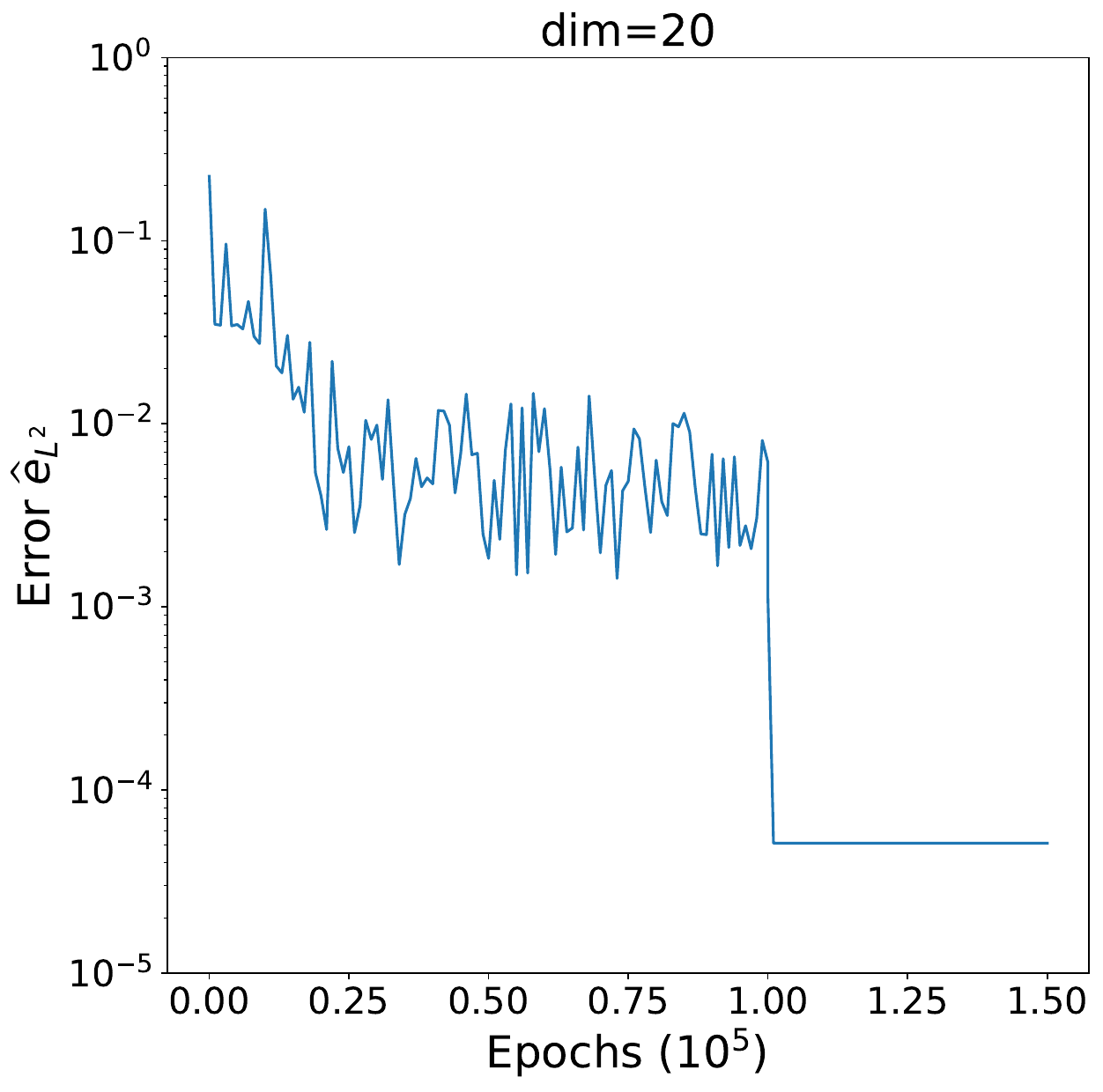}\\
\includegraphics[width=4cm,height=4cm]{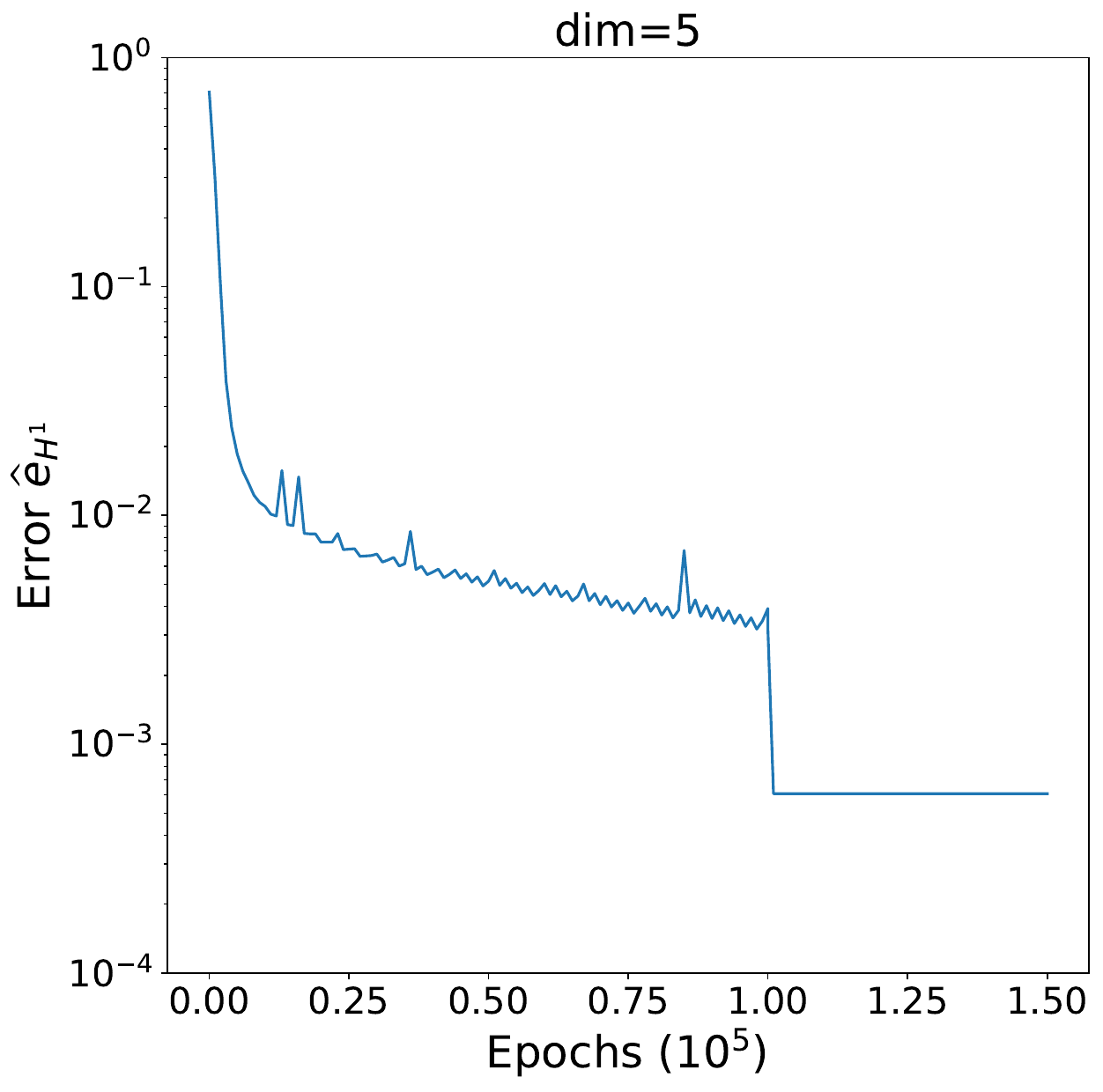}
\includegraphics[width=4cm,height=4cm]{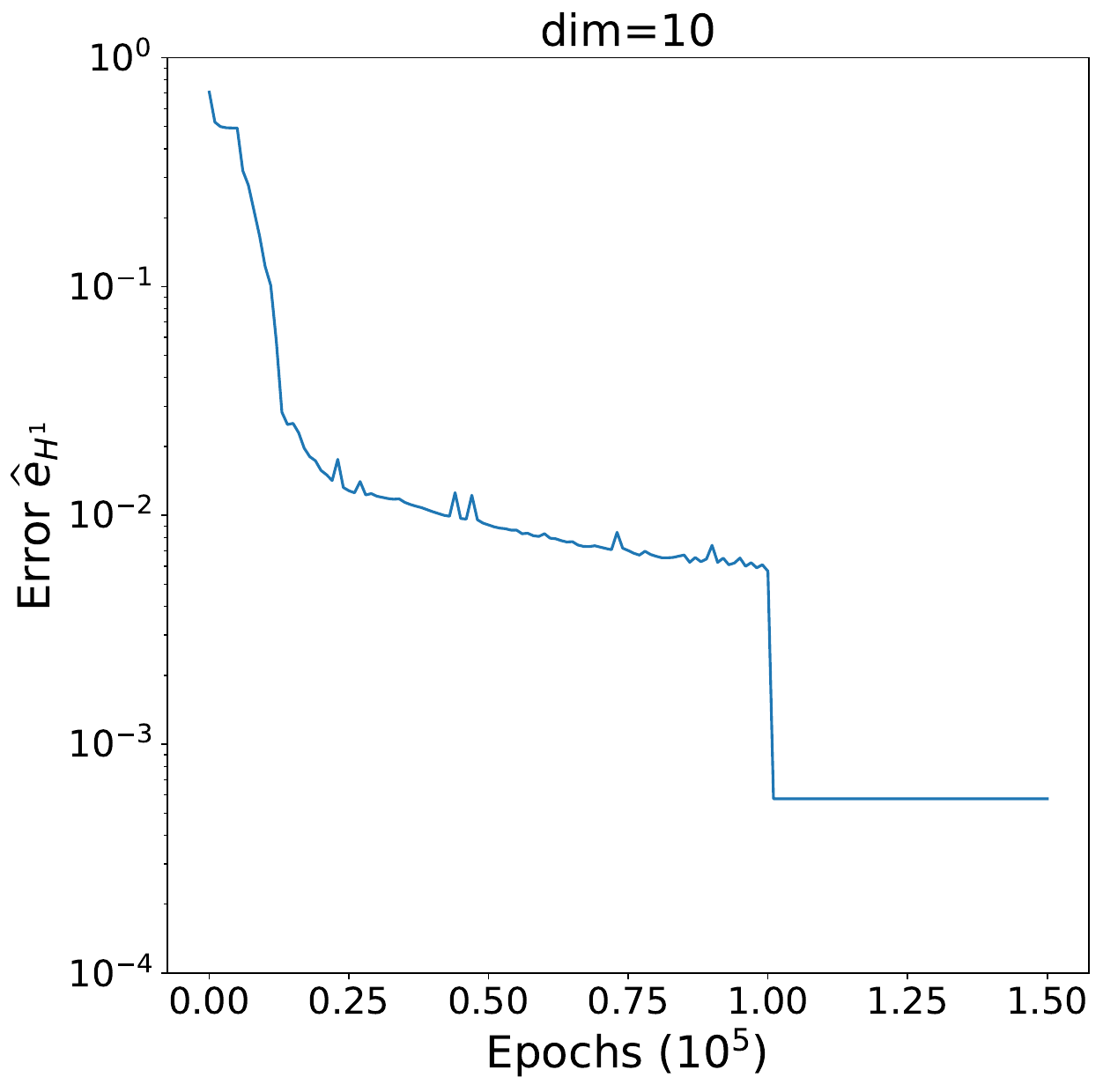}
\includegraphics[width=4cm,height=4cm]{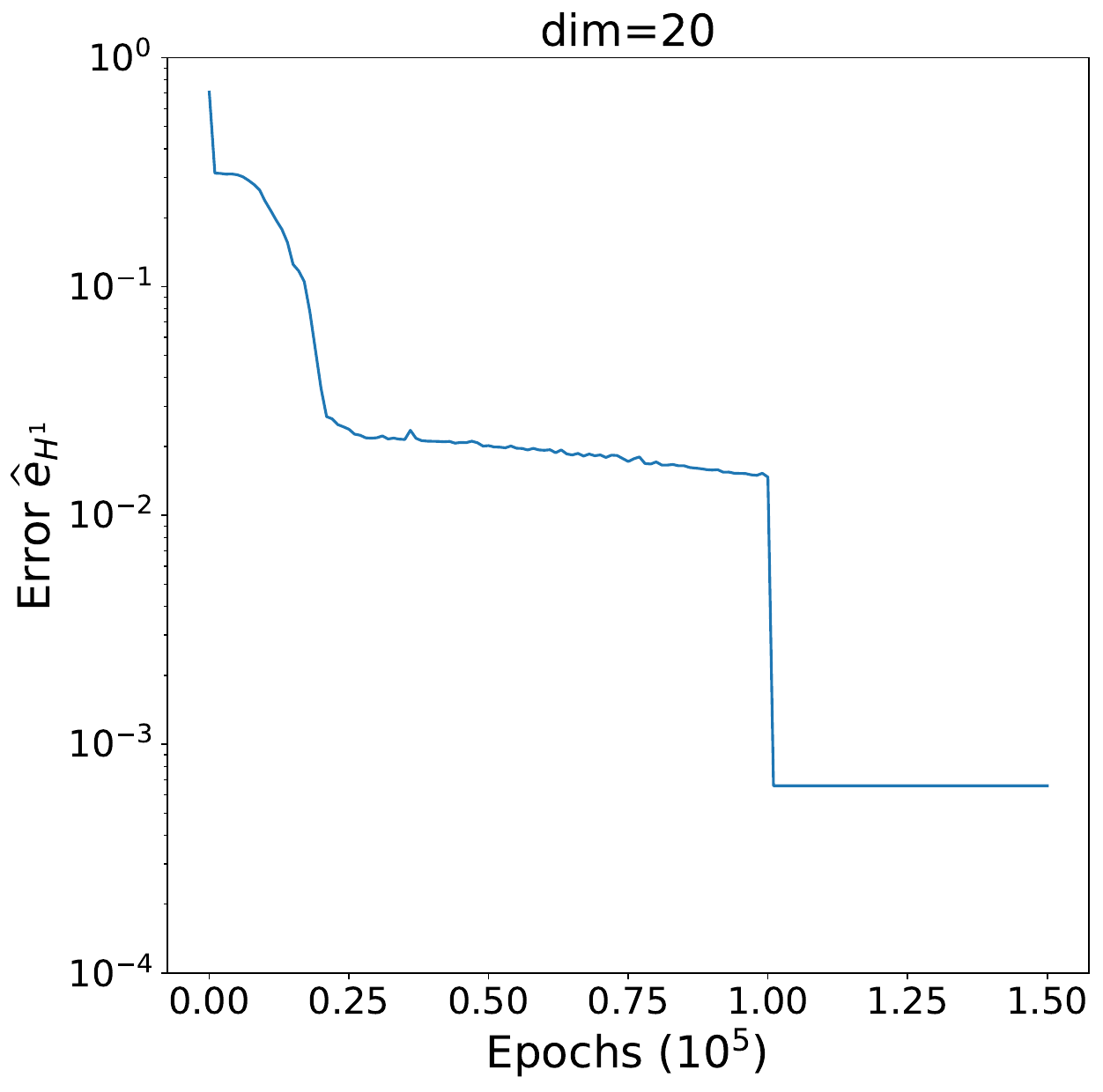}
\caption{Relative errors during the training process for the Neumann boundary problem: for $d=5,10,20$.
The top row shows the relative $L^2$ errors and the bottom one shows the relative $H^1$ errors
of the approximate solution.}\label{fig_neumann}
\end{figure}

\begin{table}[!htb]
\caption{Errors of the Neumann boundary value problem for $d=5,10,20$.}\label{table_neumann}
\begin{center}
\begin{tabular}{ccccc}
\hline
$d$&      $e_{L^2}$&   $e_{H^1}$\\
\hline
5&       4.791e-05&   6.079e-04\\
10&      4.520e-05&   5.778e-04\\
20&      5.122e-05&   6.586e-04\\
\hline
\end{tabular}
\end{center}
\end{table}

From (\ref{exact_neumann}), the exact solution can be represented as CP-decomposition with rank $d$. We can at least claim that the rank of the exact solution is no more than $d$. For the case $d=10$, we take hyperparameter $p$ from 1 to 20 and train the TNN with a learning rate of 0.01. Figure \ref{fig_neumann_error_p} shows the final relative errors $\widehat e_{L^2}$ and $\widehat e_{H^1}$ after 100000 epochs versus $p$. From Figure \ref{fig_neumann_error_p}, we can find an interesting phenomenon that the explicit CP representation (\ref{exact_neumann}) may not describe the effect of low-rank approximation properly. From (\ref{exact_neumann}), it looks like the real rank of the exact solution is $p=10$, but there is no significant error reduction from $p=5$ to $p=20$.

\begin{figure}[htb!]
\centering
\includegraphics[width=4cm,height=4cm]{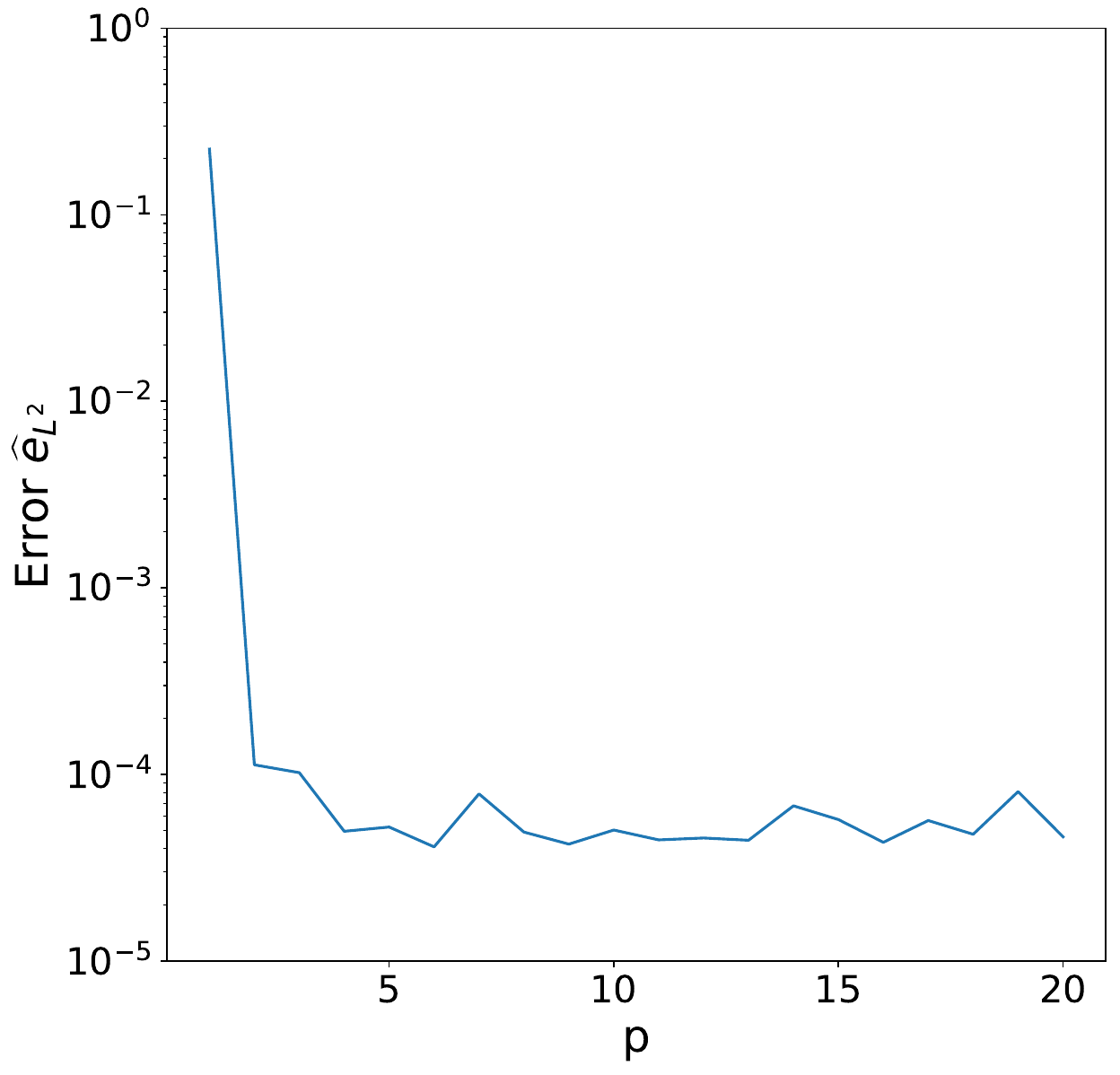}
\includegraphics[width=4cm,height=4cm]{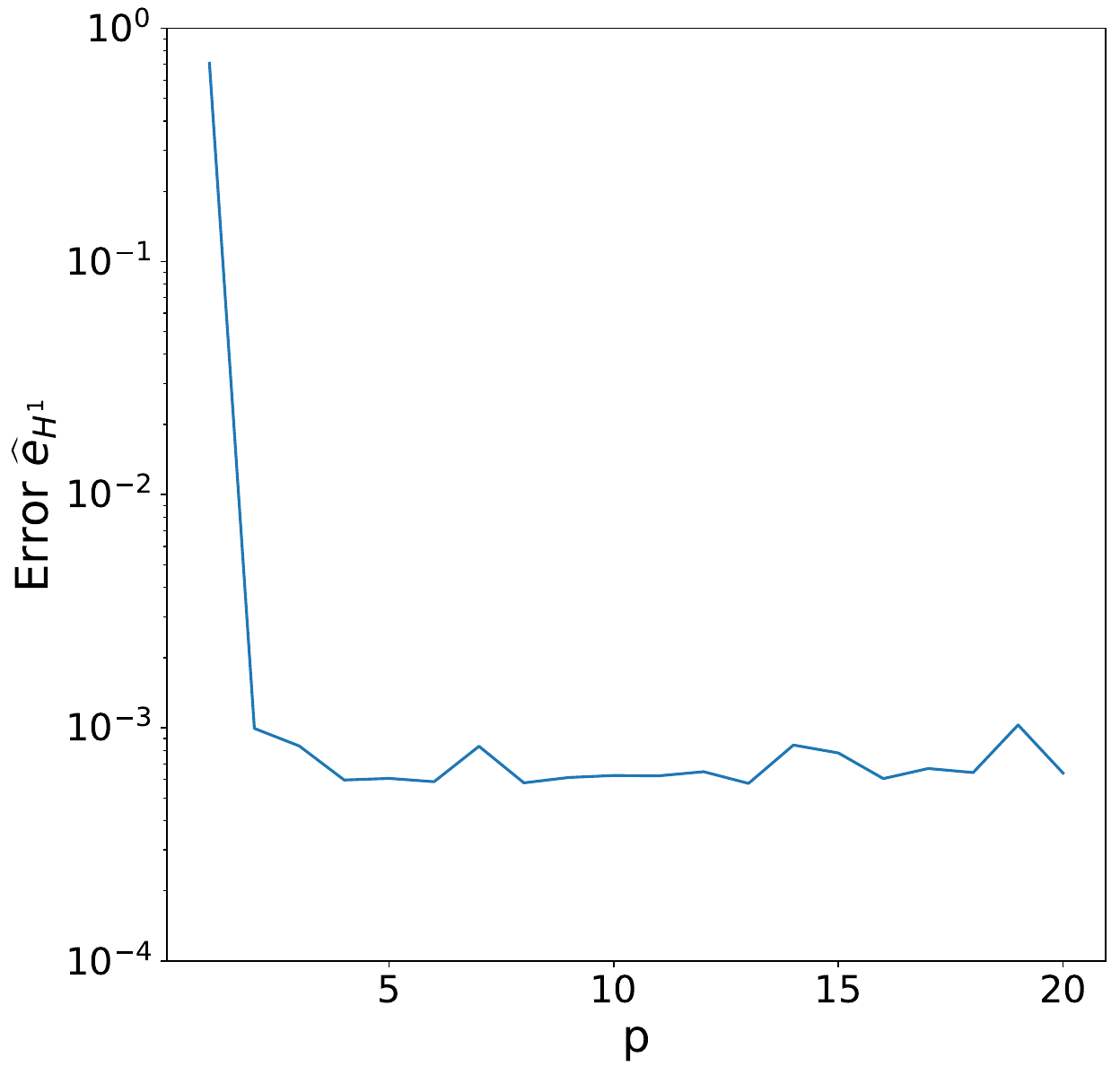}
\caption{Relative errors vs. hyperparameter $p$ for Neumann boundary value problem ($d=10$).
The left subfigure shows the relative $L^2$ errors and the right one shows the relative $H^1$ errors of the approximate solution.}\label{fig_neumann_error_p}
\end{figure}

\section{Conclusions}
In this paper, we present the TNN and corresponding machine-learning methods for solving high-dimensional PDEs. Different from the well-known FNN-based machine learning methods,  TNN has a tensor product form and its numerical integration can use the fixed quadrature points in each dimension. Benefiting from the tensor product structure, we can design an efficient integration scheme for the functions defined by TNN. These properties lead to TNN-based machine learning that can do the direct inner product computation with the polynomial scale of work for the dimension. We believe that the ability of direct inner production computation will bring more applications in solving high-dimensional PDEs.

Based on the ideas of CP decomposition for tensor product Hilbert space and representing the trial functions by deep neural networks, we introduce the TNN structure, its corresponding approximation property, and an efficient numerical integration scheme. The theoretical results, algorithms, and numerical investigations show that this type of structure has the following advantages:
\begin{enumerate}
\item With the help of the straightforward tensor product representation way, we can integrate
this type of function separately in the 1-dimensional interval. This is the reason that the
TNN can overcome the exponential dependence of the computational work for
high-dimensional integrations on the dimension.

\item Instead of randomly sampling data points, the training process uses fixed quadrature points.
This means that the TNN method can avoid the random sampling process to produce the GD
direction in each step and then has better stability.
\end{enumerate}

Besides above observations, there should exist some interesting topics that need to be addressed in future work:
\begin{enumerate}
\item The choice of the subnetwork structure, the activation function, and the more important hyperparameter $p$.

\item When the computing domain is not tensor-product type, further strategies are demanded to maintain the high efficiency and accuracy of the numerical integration.

\item  Since the TNN uses fixed quadrature points, we should design more efficient numerical methods to solve the included optimization problems in the machine learning process.
\end{enumerate}

In addition, more applications to other types of problems should be investigated in the future.

\end{document}